\documentclass[12pt]{article}
\usepackage{lmodern}
\usepackage{microtype}
\usepackage{enumitem}
\usepackage{natbib}
\usepackage{float}
\usepackage{amsmath,amssymb,amsfonts,amsthm,mathtools}
\usepackage{xcolor}
\usepackage{soul}
\definecolor{Rubro}{RGB}{220,20,60}
\definecolor{Emerald}{HTML}{00674F}
\sethlcolor{yellow!35}
\usepackage{hyperref}
\hypersetup{
    colorlinks=true,
    linkcolor=blue,
    citecolor=Emerald,
    filecolor=magenta,
    urlcolor=blue,
    pdftitle={Hard-Edge Determinant Fields and Second-Moment Universality for Random Matrix Ratios},
    pdfpagemode=FullScreen
}
\makeatletter
\renewcommand{\hyper@natlinkstart}[1]{%
  \Hy@backout{#1}%
  \textbf\bgroup
  \hyper@linkstart{cite}{cite.#1}%
  \def\hyper@nat@current{#1}%
}
\renewcommand{\hyper@natlinkend}{%
  \hyper@linkend
  \egroup
}
\renewcommand{\hyper@natlinkbreak}[2]{%
  \hyper@linkend
  \egroup
  #1%
  \textbf\bgroup
  \hyper@linkstart{cite}{cite.#2}%
}
\makeatother
\usepackage{booktabs}
\usepackage[margin=2.25cm]{geometry}

\soulregister\cite7
\soulregister\citet7
\soulregister\citep7
\soulregister\ref7
\soulregister\pageref7

\newtheorem{theorem}{Theorem}[section]

\newtheorem{remark}[theorem]{Remark}
\newtheorem{assumption}[theorem]{Assumption}

\newcommand{\E}{\mathbb{E}}
\renewcommand{\Pr}{\mathbb{P}}
\renewcommand{\leq}{\leqslant}
\renewcommand{\geq}{\geqslant}

\newcommand{\Var}{\mathbb{V}}

\renewcommand{\limsup}{\operatorname{limsup}}

\newcommand{\1}{\mathbf{1}}

\title{Hard-Edge Determinant Fields and Second-Moment Universality for Random Matrix Ratios}
\author{Guilherme Vianna%
\footnote{University of S\~ao Paulo/Columbia University; \texttt{guilherme.dias.vianna@usp.br}.}%
\footnote{2020 Mathematics Subject Classification. Primary: 60B20; Secondary: 15B52, 30D20, 60G55.}%
\footnote{The author thanks Professor Yuanyuan Xu (Academy of Mathematics and Systems Science, Chinese Academy of Sciences) for helpful comments on the process of drafting this paper.}}
\date{}

\begin{document}

\maketitle

\begin{abstract}
We study the microscopic spectrum at the origin of ratios of independent complex Girko matrices. Under bounded-density and finite-moment assumptions, together with circular second-moment matching, we prove compact-uniform convergence of the normalized determinants
to a random entire function constructed from the complex hard edge and an independent Ginibre array. The zero divisor of this limiting determinant field has the law of the infinite complex Ginibre process. Consequently, the first finitely many smallest and largest eigenvalue moduli, the inner and outer spectral radii, and eigenvalue counts in fixed bounded sets have universal limits depending only on the second moments. This answers the second-moment question posed by Chafaï, García-Zelada, and Xu for the spectral radii. We also obtain a multivariate determinant field for finitely many perturbation directions and prove the uniform hard-edge comparison needed to extend the conclusions to triangular arrays of atom laws.
\end{abstract}

\section{Preliminaries}
\label{sec:preliminaries}

\subsection{Matrix ratios and assumptions}
\label{subsec:model-and-assumptions}

An \(n\times n\) complex Girko matrix is a random matrix whose entries are independent and identically distributed complex random variables with mean zero and variance one. A complex Ginibre matrix is the special case in which the common entry has the form
\(
\dfrac{G_1+\mathrm{i}G_2}{\sqrt{2}},
\)
where \(G_1\) and \(G_2\) are independent standard real Gaussian random variables.

\begin{assumption}
\label{ass:matrix-atoms}
For every \(n\geq 1\), let
\[
A_n=\left(A_{n,ij}\right)_{i,j=1}^n
\qquad\text{and}\qquad
B_n=\left(B_{n,ij}\right)_{i,j=1}^n
\]
be independent complex Girko matrices. The entries of \(A_n\) are independent copies of a complex random variable \(\Xi_A\), and the entries of \(B_n\) are independent copies of a complex random variable \(\Xi_B\). The laws of \(\Xi_A\) and \(\Xi_B\) may differ, but neither law depends on \(n\). For each \(\nu\in\left\{A,B\right\}\),
\begin{equation}
\E\left[\Xi_\nu\right]=0,
\qquad
\E\left[\left|\Xi_\nu\right|^2\right]=1,
\qquad
\E\left[\Xi_\nu^2\right]=0.
\label{eq:circular-second-moments}
\end{equation}
Moreover, \(\E\left[\left|\Xi_\nu\right|^p\right]<\infty\) for every integer \(p\geq 1\), and the law of \(\Xi_\nu\) admits a Lebesgue density
\(\varphi_\nu\colon\mathbb{C}\to\left[0,\infty\right)\) satisfying
\[
\left\|\varphi_\nu\right\|_{L^\infty\left(\mathbb{C}\right)}<\infty.
\]
\end{assumption}

The identities in \eqref{eq:circular-second-moments} match the real bivariate moments of a standard complex Gaussian random variable through total degree two. Indeed, writing
\(\Xi_\nu=X_\nu+\mathrm{i}Y_\nu\), they imply
\[
\E\left[X_\nu^2\right]
=
\E\left[Y_\nu^2\right]
=
\frac{1}{2},
\qquad
\E\left[X_\nu Y_\nu\right]=0.
\]
We refer to \eqref{eq:circular-second-moments} as the circular second-moment condition.

The density assumption implies that \(A_n\) and \(B_n\) are almost surely invertible. We therefore define the matrix ratio
\[
M_n:=A_nB_n^{-1}.
\]
For a matrix \(T\in\mathbb{C}^{n\times n}\), let \(\operatorname{spec}\left(T\right)\) denote its spectrum as a multiset, with each eigenvalue repeated according to its algebraic multiplicity, and define
\[
\rho_{\min}\left(T\right)
:=
\min_{\lambda\in\operatorname{spec}\left(T\right)}
\left|\lambda\right|,
\quad
\rho_{\max}\left(T\right)
:=
\max_{\lambda\in\operatorname{spec}\left(T\right)}
\left|\lambda\right|.
\]
Since \(M_n^{-1}=B_nA_n^{-1}\), spectral inversion gives
\begin{equation}
\begin{aligned}
\operatorname{spec}\left(B_nA_n^{-1}\right)
=
\left\{
\lambda^{-1}:
\lambda\in\operatorname{spec}\left(M_n\right)
\right\},\quad
\rho_{\max}\left(M_n\right)
=
\frac{1}{\rho_{\min}\left(B_nA_n^{-1}\right)}.
\end{aligned}
\label{eq:spectral-inversion}
\end{equation}

\subsection{Notation}
\label{subsec:notation}

For \(T\in\mathbb{C}^{n\times n}\), its singular values are ordered increasingly as
\(
0\leq s_1\left(T\right)\leq\cdots\leq s_n\left(T\right).
\)

Let \(\mathcal{N}\left(\mathbb{C}\right)\) denote the space of locally finite point measures on \(\mathbb{C}\), endowed with the vague topology. Thus, \(\mu_n\to\mu\) vaguely when
\[
\int_{\mathbb{C}} g\left(z\right)\,\mu_n\left(\mathrm{d}z\right)
\rightarrow
\int_{\mathbb{C}} g\left(z\right)\,\mu\left(\mathrm{d}z\right)
\]
for every \(g\in C_c\left(\mathbb{C}\right)\). Points are counted with their multiplicities.

For \(r\geq 1\), let \(\operatorname{Hol}\left(\mathbb{C}^r\right)\) be the space of holomorphic functions on \(\mathbb{C}^r\), endowed with uniform convergence on compact sets. If \(f\in\operatorname{Hol}\left(\mathbb{C}\right)\) is not identically zero, its zero-counting measure is
\[
\mathcal{Z}\left(f\right)
:=
\sum_{\substack{z\in\mathbb{C}\\ f\left(z\right)=0}}
m_f\left(z\right)\delta_z,
\]
where \(m_f\left(z\right)\) is the multiplicity of the zero at \(z\).

The infinite complex Ginibre process, denoted by \(\operatorname{Gin}_{\infty}\), is the determinantal point process on \(\mathbb{C}\) with kernel
\[
K_{\infty}\left(z,w\right)
=
\frac{1}{\pi}
\exp\left(
z\overline{w}
-\frac{\left|z\right|^2}{2}
-\frac{\left|w\right|^2}{2}
\right)
\]
with respect to planar Lebesgue measure. We write
\[
0<R_1<R_2<\cdots
\]
for the ordered moduli of its points; the inequalities are strict almost surely.

\newpage

\section{Main results}
\label{sec:main-results}

We now state and prove the main results of the paper.

\subsection{The scalar hard-edge determinant field}
\label{subsec:determinant-field}

Since \(A_n\) is almost surely invertible, we may define
\begin{equation}
Q_n\left(w\right)
:=
\frac{\det\left(A_n-wB_n/\sqrt{n}\right)}
{\det\left(A_n\right)},
\qquad
w\in\mathbb{C}.
\label{eq:normalized-determinant}
\end{equation}
Thus, \(Q_n\) is a random polynomial satisfying \(Q_n\left(0\right)=1\).

Let \(G_n^{\mathrm{Gin}}\) be an \(n\times n\) complex Ginibre matrix with variance-one entries. We denote by
\[
0<X_1<X_2<\cdots
\]
the ordered points of the complex hard-edge process in the normalization determined, for every fixed \(K\geq 1\), by
\[
\left(
n s_1\left(G_n^{\mathrm{Gin}}\right)^2,
\ldots,
n s_K\left(G_n^{\mathrm{Gin}}\right)^2
\right)
\overset{\mathrm{d}}{\rightarrow}
\left(X_1,\ldots,X_K\right);
\]
see \citet[Theorem~6.2]{TaoVu2010}. Set
\[
D_j:=X_j^{-1/2},
\qquad
j\geq 1.
\]
Let \(G_\infty=\left(G_{ab}\right)_{a,b\geq 1}\) be an infinite array of independent standard complex Gaussian random variables, independent of \(\left(X_j\right)_{j\geq 1}\), and let
\[
G_K:=\left(G_{ab}\right)_{1\leq a,b\leq K}.
\]
For \(K\geq 1\), define the random polynomial
\begin{equation}
Q^{[K]}\left(w\right)
:=
\det\left(
I_K
-
w\operatorname{diag}\left(D_1,\ldots,D_K\right)G_K
\right).
\label{eq:limiting-determinant-truncation}
\end{equation}

\begin{theorem}[Scalar hard-edge determinant field]
\label{thm:determinant-field}
Under Assumption~\ref{ass:matrix-atoms}, there exists a random entire function
\(Q\in\operatorname{Hol}\left(\mathbb{C}\right)\), defined on the same probability space as
\(\left(X_j\right)_{j\geq 1}\) and \(G_\infty\), such that
\begin{equation}
Q^{[K]}
\overset{\mathbb{P}}{\rightarrow}
Q
\quad\text{in }\operatorname{Hol}\left(\mathbb{C}\right)
\quad\text{as }K\to\infty,
\qquad
Q_n
\overset{\mathrm{d}}{\rightarrow}
Q
\quad\text{in }\operatorname{Hol}\left(\mathbb{C}\right)
\quad\text{as }n\to\infty.
\label{eq:determinant-field-convergence}
\end{equation}
Moreover,
\(
Q\left(0\right)=1
\text{  almost surely}.
\)
We call \(Q\) the scalar hard-edge determinant field. Here, field refers to
a random holomorphic function of the perturbation parameter
\end{theorem}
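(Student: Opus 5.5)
Write $Q_n(w)=\det(I_n-wA_n^{-1}B_n/\sqrt n)$ and use the singular value decomposition $A_n=U_n\Sigma_nV_n^*$ with $\Sigma_n=\operatorname{diag}(s_1,\dots,s_n)$ in increasing order. Then
\[
Q_n(w)=\det\left(I_n-w\,\Sigma_n^{-1}\,\widetilde B_n/\sqrt n\right),\qquad \widetilde B_n:=U_n^*B_nV_n .
\]
Set $D_{n,j}:=(\sqrt n\,s_j)^{-1}$. The matrix inside the determinant becomes $I_n-w\operatorname{diag}(D_{n,1},\dots,D_{n,n})\widetilde B_n$, and $\widetilde B_n$ is independent of $\Sigma_n$ only in the Gaussian case. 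The plan has four steps, in this order.

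\textbf{Step 1 (expansion).} Expand the determinant as
\[
Q_n(w)=\sum_k(-w)^k e_k,
\]
where $e_k$ is the sum of principal $k\times k$ minors of $\operatorname{diag}(D_{n,\cdot})\widetilde B_n$. Equivalently,
\[
e_k=\sum_{|S|=k}\Bigl(\prod_{j\in S}D_{n,j}\Bigr)\det(\widetilde B_n)_{S,S}.
\]

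\textbf{Step 2 (uniform second moments and truncation).} Conditionally on $A_n$, the matrix $\widetilde B_n$ has entries that are orthonormal linear combinations of i.i.d.\ circular entries. Circular second-moment matching gives, for distinct index sets, $\E|\det(\widetilde B_n)_{S,S}|^2=k!$ and orthogonality across different $S$, exactly as in the Ginibre case. Only second moments enter here, which is why the result is second-moment universal. Hence
\[
\E\bigl[|e_k|^2\,\big|\,A_n\bigr]\le k!\,e_k\bigl(D_{n,1}^2,\dots,D_{n,n}^2\bigr)\le \frac{k!}{k!^2}\Bigl(\sum_j D_{n,j}^2\Bigr)^k .
\]
This would be summable, except that $\sum_j (ns_j^2)^{-1}$ diverges logarithmically; a random-matrix bound gives $\sum_j (n s_j^2)^{-1}\approx n\operatorname{tr}(A_nA_n^*)^{-1}/n\sim\log n$. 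I would therefore split the sum into $j\le K$ and $j>K$. For $j>K$ the tail $\sum_{j>K}D_{n,j}^{4}$ is small uniformly in $n$, since $ns_j^2\gtrsim j^2$ by Tao--Vu type lower tail bounds together with the density assumption. A determinant is affine in each row, so I would write $Q_n=\det(I-wD\widetilde B)$ and use a Hilbert--Carleman regularized estimate: the Fredholm determinant of the tail block is controlled by $\exp(C|w|^2\sum_{j>K}D_j^2)$ times a $\det_2$-type factor. Because the circular condition kills $\E\widetilde B_{jj}$, and $\E\operatorname{tr}(D\widetilde B)=0$ along with its second moment, the diverging trace term has mean zero and variance $\sum_j D_j^2|\cdot|^2$-controlled. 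The resulting estimate is
\[
\E\sup_{|w|\le R}\bigl|Q_n(w)-Q_n^{[K]}(w)\bigr|^2\le \varepsilon_K(R),
\]
uniformly in $n$, with $\varepsilon_K(R)\to0$. Here $Q_n^{[K]}$ is the determinant of the top-left $K\times K$ block of $I-wD\widetilde B_n$. The proof runs the minor expansion on the circle $|w|=R$, computes conditional second moments minor-by-minor, and uses a Cauchy estimate to pass to the supremum.

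\textbf{Step 3 (finite-$K$ limit).} For fixed $K$, the vector $(ns_j^2)_{j\le K}$ converges to $(X_j)$; this is the hard-edge universality of Tao--Vu, which needs the moment assumptions. Jointly, the $K\times K$ block $(\widetilde B_n)_{ab}=u_a^*B_nv_b$ converges to $G_K$ by a Lindeberg CLT conditional on $A_n$, since the singular vectors $u_a,v_b$ are delocalized. Hence $Q_n^{[K]}\to Q^{[K]}$ in distribution in $\operatorname{Hol}(\mathbb C)$, because $Q_n^{[K]}$ is a polynomial of degree $K$ with converging coefficients.

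\textbf{Step 4 (assembly).} The same second-moment bound, with Ginibre in place of $\widetilde B_n$ and the limiting $D_j$, shows that $(Q^{[K]})_K$ is Cauchy in $L^2$ uniformly on compacts. Indeed, $X_j\asymp j^2$ almost surely, so $\sum D_j^4<\infty$, and the trace part is a convergent martingale. This defines $Q$ with $Q^{[K]}\to Q$ in probability and $Q(0)=1$. A standard approximation argument then yields $Q_n\to Q$ in distribution.

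\textbf{Main obstacle.} The hardest step is the uniform-in-$n$ tail estimate in Step 2. It requires a deterministic-type lower bound $ns_j^2\gtrsim j^2$ for intermediate $j$, with probability close to one; I would get this from Tao--Vu/Rudelson--Vershynin least-singular-value bounds and local laws. It also requires handling the non-absolutely-convergent $\sum_j D_j\widetilde B_{jj}$ through cancellation, which is a $\det_2$-regularization in expectation rather than a pathwise bound.
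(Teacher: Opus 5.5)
There is a genuine gap in Step~2, which carries the whole argument. It rests on a wrong diagnosis, and the tool you propose for it cannot work. The sum $\sum_j D_{n,j}^2=\frac1n\operatorname{tr}\bigl((A_nA_n^*)^{-1}\bigr)$ does not grow like $\log n$. Near the hard edge $ns_j(A_n)^2$ grows like $j^2$, as you yourself use when you invoke $ns_j^2\gtrsim j^2$. Hence $D_{n,j}^2\asymp j^{-2}$ and the profile is tight in $\ell^2$. For Ginibre, Cauchy interlacing and $\E[1/Y]=1/K$ for $Y\sim\mathrm{Gamma}(K+1,1)$ give $\E\sum_{j>K}(ns_j^2)^{-1}\leq 1/K$. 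So there is no divergent trace to renormalize.

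The Hilbert--Carleman route would fail anyway. Each row of $\widetilde B_n$ carries $n$ unit-variance entries, so $\|D_n\widetilde B_n\|_{\mathrm{HS}}^2=\sum_jD_{n,j}^2\|(\widetilde B_n)_{j,\cdot}\|_2^2\approx n\sum_jD_{n,j}^2$. No bound of the form $\exp(C|w|^2\sum_{j>K}D_{n,j}^2)$ is therefore available. Convergence comes only from the exact conditional orthogonality $\E[\det(\widetilde B_n)_{S,S}\overline{\det(\widetilde B_n)_{T,T}}\mid A_n]=|S|!\,\mathbf 1_{\{S=T\}}$.

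Your inequality $k!\,e_k(D^2)\leq(\sum_jD_{n,j}^2)^k/k!$ is off by a factor $k!$. Maclaurin only gives $k!\,e_k(D^2)\leq(\sum_jD_{n,j}^2)^k$. The resulting bound is then a geometric series, finite only when $|w|^2\sum_jD_{n,j}^2<1$. That fails on large discs, because $D_{n,1}$ is of order one.

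The missing idea is the paper's split $S=S_1\mathbin{\dot{\cup}}S_2$ with $S_1\subseteq[J]$:
\begin{itemize}
\item keep $e_a(D_{n,1}^2,\dots,D_{n,J}^2)$ exactly;
\item require only $t\Delta_{n,J}<1$ for the tail $\Delta_{n,J}=\sum_{j>J}D_{n,j}^2$;
\item distinguish one index above $K$ to extract the small factor $\Delta_{n,K}$.
\end{itemize}
This gives the bound $t\Delta_{n,K}\sum_{a\leq J}(a+1)!\,e_a\,t^a(1-t\Delta_{n,J})^{-a-2}$ with $t=\rho^2$.

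Your target estimate $\E\sup_{|w|\leq R}|Q_n-Q_n^{[K]}|^2\leq\varepsilon_K(R)$, uniformly in $n$, is also false as stated, already in the Ginibre case your argument must cover. The term $S=\{1,K+1\}$ contributes $2|w|^4\E[D_{n,1}^2D_{n,K+1}^2]$. Since $ns_1^2\sim\mathrm{Exp}(1)$, we have $\E[(ns_1^2)^{-1}]=\int_0^\infty x^{-1}e^{-x}\,\mathrm{d}x=\infty$, and this term is infinite. You must instead:
\begin{itemize}
\item argue conditionally on $A_n$;
\item localize on $\{t\Delta_{n,J}\leq\frac12,\ \max_{j\leq J}D_{n,j}\leq B\}$;
\item conclude only in probability.
\end{itemize}
The same applies in Step~4, where $(Q^{[K]})$ is Cauchy conditionally on $\boldsymbol D$, not in $L^2$.

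Finally, the analytic input the argument really needs is $\lim_{K\to\infty}\limsup_{n\to\infty}\Pr(\sum_{j>K}D_{n,j}^2>\varepsilon)=0$, and you only gesture at it. Least-singular-value bounds control $s_1$, not a simultaneous lower bound $ns_j^2\gtrsim j^2$ over a growing window. Rigidity only reaches $j\gg n^{\eta}$. The mesoscopic window $K<j\leq n^{\alpha}$ therefore needs something like the growing-window comparison in Tao--Vu's Theorem~6.2 with $\lfloor n^\alpha\rfloor$ coordinates, coupled to the Gaussian inverse-trace bound above. Steps~1 and~3 and the converging-together assembly do match the paper.
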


\begin{proof}
Recall that \(M_n=A_nB_n^{-1}\). Since \(B_n\) is almost surely invertible,
\[
A_n-\frac{w}{\sqrt{n}}B_n
=
\left(
M_n-\frac{w}{\sqrt{n}}I_n
\right)B_n.
\]
Consequently, \(Q_n\left(w\right)=0\) if and only if
\(w/\sqrt{n}\in\operatorname{spec}\left(M_n\right)\). Multiplication by the nonzero constant
\(\det\left(B_n\right)/\det\left(A_n\right)\), together with the linear change of variable
\(w\mapsto w/\sqrt{n}\), preserves multiplicities. Hence
\begin{equation}
\mathcal{Z}\left(Q_n\right)
=
\sum_{\lambda\in\operatorname{spec}\left(M_n\right)}
\delta_{\sqrt{n}\lambda},
\label{eq:determinant-zeros}
\end{equation}
where the sum is over the spectrum with algebraic multiplicity.

Choose an ordered measurable singular-value decomposition
\[
A_n
=
U_n\Sigma_nV_n^*,
\qquad
\Sigma_n
=
\operatorname{diag}\left(
s_1\left(A_n\right),
\ldots,
s_n\left(A_n\right)
\right).
\]
Define
\[
D_{n,j}
:=
\frac{1}{\sqrt{n}\,s_j\left(A_n\right)},
\qquad
D_n
:=
\operatorname{diag}\left(D_{n,1},\ldots,D_{n,n}\right),
\qquad
C_n
:=
U_n^*B_nV_n.
\]
Then
\[
A_n-\frac{w}{\sqrt{n}}B_n
=
U_n
\left(
\Sigma_n-\frac{w}{\sqrt{n}}C_n
\right)
V_n^*.
\]
Dividing its determinant by
\(\det\left(A_n\right)=\det\left(U_n\Sigma_nV_n^*\right)\) gives
\begin{equation}
Q_n\left(w\right)
=
\det\left(I_n-wD_nC_n\right).
\label{eq:singular-value-reduction}
\end{equation}

Absolute continuity of the entries implies that the singular values of \(A_n\) are distinct almost surely. The remaining ambiguity in the ordered singular-value decomposition consists of replacing
\(
U_n\longmapsto U_n\Lambda_n
\)
and
\(
V_n\longmapsto V_n\Lambda_n,
\)
where \(\Lambda_n\) is diagonal and unitary. Under this replacement,
\(C_n\) becomes \(\Lambda_n^*C_n\Lambda_n\). Since \(D_n\) commutes with
\(\Lambda_n\), the matrix in \eqref{eq:singular-value-reduction} is conjugated by
\(\Lambda_n\), and its determinant is unchanged.

Write \(\left[n\right]:=\left\{1,\ldots,n\right\}\). Expanding the determinant in \eqref{eq:singular-value-reduction} over principal subsets gives
\begin{equation}
Q_n\left(w\right)
=
\sum_{S\subseteq\left[n\right]}
\left(-w\right)^{\left|S\right|}
\left(\prod_{j\in S}D_{n,j}\right)
\det\left(\left(C_n\right)_{S,S}\right).
\label{eq:principal-minor-expansion}
\end{equation}
We claim that, conditional on \(A_n\), these principal minors are orthogonal in \(L^2\):
\begin{equation}
\E\left[
\det\left(\left(C_n\right)_{S,S}\right)
\overline{\det\left(\left(C_n\right)_{T,T}\right)}
\,\middle|\,
A_n
\right]
=
\left|S\right|!\,
\mathbf{1}_{\left\{S=T\right\}}
\qquad
\text{for all }S,T\subseteq\left[n\right].
\label{eq:principal-minor-orthogonality}
\end{equation}

Conditional on \(A_n\), the matrices \(U_n\) and \(V_n\) are deterministic, while \(B_n\) remains independent of them. Let \(k=\left|S\right|\). Applying the Cauchy--Binet formula first to
\[
\left(C_n\right)_{S,S}
=
\left(U_n^*\right)_{S,\left[n\right]}
B_n
\left(V_n\right)_{\left[n\right],S}
\]
and then to each resulting product gives
\[
\det\left(\left(C_n\right)_{S,S}\right)
=
\sum_{\substack{I,J\subseteq\left[n\right]\\ \left|I\right|=\left|J\right|=k}}
\overline{\det\left(\left(U_n\right)_{I,S}\right)}
\det\left(\left(B_n\right)_{I,J}\right)
\det\left(\left(V_n\right)_{J,S}\right).
\]
It therefore remains to compute the mixed second moments of the minors of \(B_n\).

Let \(I=\left\{i_1<\cdots<i_k\right\}\), \(J=\left\{j_1<\cdots<j_k\right\}\), \(I'=\left\{i'_1<\cdots<i'_\ell\right\}\), and \(J'=\left\{j'_1<\cdots<j'_\ell\right\}\). Expanding both determinants into permutations yields
\[
\begin{aligned}
&\E\left[
\det\left(\left(B_n\right)_{I,J}\right)
\overline{\det\left(\left(B_n\right)_{I',J'}\right)}
\right]  =
\sum_{\sigma\in\mathfrak{S}_k}
\sum_{\tau\in\mathfrak{S}_\ell}
\operatorname{sgn}\left(\sigma\right)
\operatorname{sgn}\left(\tau\right)
\E\left[
\prod_{a=1}^{k}
\left(B_n\right)_{i_a,j_{\sigma\left(a\right)}}
\prod_{b=1}^{\ell}
\overline{\left(B_n\right)_{i'_b,j'_{\tau\left(b\right)}}}
\right].
\end{aligned}
\]
Each determinant monomial uses every matrix position at most once. Suppose that
\[
\left\{
\left(i_a,j_{\sigma\left(a\right)}\right):
1\leq a\leq k
\right\}
\neq
\left\{
\left(i'_b,j'_{\tau\left(b\right)}\right):
1\leq b\leq \ell
\right\}.
\]
A position belonging to the symmetric difference then appears exactly once in the product inside the expectation. Independence of the entries allows its expectation to be factored from the remaining terms, and this factor is either
\[
\E\left[\left(B_n\right)_{ij}\right]=0
\qquad\text{or}\qquad
\E\left[\overline{\left(B_n\right)_{ij}}\right]=0.
\]
Hence the corresponding summand vanishes.

A summand can therefore survive only when the two sets of matrix positions coincide. Projecting these sets onto their first and second coordinates gives
\(
k=\ell,
I=I',
J=J'.
\)
With the indices written in increasing order, equality of the position sets then forces
\(\sigma=\tau\). For every such surviving pair,
\[
\E\left[
\prod_{a=1}^{k}
\left(B_n\right)_{i_a,j_{\sigma\left(a\right)}}
\overline{\left(B_n\right)_{i_a,j_{\sigma\left(a\right)}}}
\right]
=
\prod_{a=1}^{k}
\E\left[
\left|\left(B_n\right)_{i_a,j_{\sigma\left(a\right)}}\right|^2
\right]
=
1.
\]
There are \(k!\) possible permutations, and
\(\operatorname{sgn}\left(\sigma\right)^2=1\). Consequently,
\[
\E\left[
\det\left(\left(B_n\right)_{I,J}\right)
\overline{\det\left(\left(B_n\right)_{I',J'}\right)}
\right]
=
k!\,
\mathbf{1}_{\left\{
k=\ell,\,
I=I',\,
J=J'
\right\}}.
\]

We now substitute this identity into the two expansions. If \(k=\left|S\right|\) and \(\ell=\left|T\right|\), then
\[
\begin{aligned}
&\E\left[
\det\left(\left(C_n\right)_{S,S}\right)
\overline{\det\left(\left(C_n\right)_{T,T}\right)}
\,\middle|\,
A_n
\right] \\
&\quad =
k!\,\mathbf{1}_{\left\{k=\ell\right\}}
\left(
\sum_{\substack{I\subseteq\left[n\right] \left|I\right|=k}}
\overline{\det\left(\left(U_n\right)_{I,S}\right)}
\det\left(\left(U_n\right)_{I,T}\right)
\right)
\left(
\sum_{\substack{J\subseteq\left[n\right]\\ \left|J\right|=k}}
\det\left(\left(V_n\right)_{J,S}\right)
\overline{\det\left(\left(V_n\right)_{J,T}\right)}
\right) \\
&\quad =
k!\,\mathbf{1}_{\left\{k=\ell\right\}}
\det\left(\left(U_n^*U_n\right)_{S,T}\right)
\det\left(\left(V_n^*V_n\right)_{T,S}\right).
\end{aligned}
\]
Since \(U_n\) and \(V_n\) are unitary, each determinant on the last line equals one when \(S=T\) and zero otherwise. This proves \eqref{eq:principal-minor-orthogonality}.

Extend the inverse singular-value coordinates by zeros and write
\[
\boldsymbol{D}_n
:=
\left(
D_{n,1},\ldots,D_{n,n},0,0,\ldots
\right)
\in\ell^2,
\qquad
\boldsymbol{D}
:=
\left(D_1,D_2,\ldots\right).
\]
We prove that
\[
\boldsymbol{D}_n
\overset{\mathrm{d}}{\rightarrow}
\boldsymbol{D}
\qquad\text{in }\ell^2.
\]

For \(1\leq j\leq n\), set
\[
\Lambda_{n,j}
:=
n s_j\left(A_n\right)^2.
\]
If \(G_n^{\mathrm{Gin}}\) is a raw \(n\times n\) complex Ginibre matrix, define similarly
\[
\Lambda_{n,j}^{\mathrm{Gin}}
:=
n s_j\left(G_n^{\mathrm{Gin}}\right)^2.
\]
Thus,
\[
D_{n,j}^2=\frac{1}{\Lambda_{n,j}}.
\]

The growing-window form of \citet[Theorem~6.2]{TaoVu2010} supplies a constant
\(c_{\mathrm{TV}}>0\), which may be decreased below without further mention, such that the following comparison holds. Let
\[
m_n:=\left\lfloor n^\alpha\right\rfloor,
\qquad
0<\alpha<c_{\mathrm{TV}},
\qquad
\delta_n:=n^{-c_{\mathrm{TV}}},
\]
and define
\[
\boldsymbol{\Lambda}_n^{\left(m_n\right)}
:=
\left(
\Lambda_{n,1},\ldots,\Lambda_{n,m_n}
\right),
\qquad
\boldsymbol{\Lambda}_{n,\mathrm{Gin}}^{\left(m_n\right)}
:=
\left(
\Lambda_{n,1}^{\mathrm{Gin}},\ldots,
\Lambda_{n,m_n}^{\mathrm{Gin}}
\right).
\]
For every measurable \(\Omega\subset\mathbb{R}^{m_n}\),
\[
\begin{aligned}
&\mathbb{P}\left(
\boldsymbol{\Lambda}_{n,\mathrm{Gin}}^{\left(m_n\right)}
\in
\Omega\setminus\partial_{\delta_n}\Omega
\right)
-\delta_n \leq
\mathbb{P}\left(
\boldsymbol{\Lambda}_n^{\left(m_n\right)}
\in\Omega
\right) \leq
\mathbb{P}\left(
\boldsymbol{\Lambda}_{n,\mathrm{Gin}}^{\left(m_n\right)}
\in
\Omega\cup\partial_{\delta_n}\Omega
\right)
+\delta_n,
\end{aligned}
\]
where \(\partial_{\delta}\Omega\) denotes the set of points at
\(\ell^\infty\)-distance at most \(\delta\) from the topological boundary of
\(\Omega\).

We briefly justify the coupling consequence that will be used below. For a closed set
\(F\subset\mathbb{R}^{m_n}\), the upper comparison gives
\[
\mathbb{P}\left(
\boldsymbol{\Lambda}_n^{\left(m_n\right)}\in F
\right)
\leq
\mathbb{P}\left(
\boldsymbol{\Lambda}_{n,\mathrm{Gin}}^{\left(m_n\right)}
\in F^{\delta_n}
\right)
+\delta_n,
\]
where
\(
F^r
:=
\left\{
\boldsymbol{x}:
\operatorname{dist}_{\infty}\left(\boldsymbol{x},F\right)<r
\right\}.
\)
Applying the lower comparison with \(\Omega=F^{2\delta_n}\) gives the reverse Prokhorov inequality, because
\[
F
\subset
F^{2\delta_n}
\setminus
\partial_{\delta_n}\left(F^{2\delta_n}\right).
\]
The Prokhorov distance between the two vectors is therefore at most
\(2\delta_n\). By Strassen's coupling theorem, they may be realized on a common probability space so that
\[
\mathbb{P}\left(
\max_{1\leq j\leq m_n}
\left|
\Lambda_{n,j}
-
\Lambda_{n,j}^{\mathrm{Gin}}
\right|
>
2\delta_n
\right)
\leq
2\delta_n.
\]
This use of \citet[Theorem~6.2]{TaoVu2010} concerns the fixed atom law
\(\Xi_A\); no uniformity over \(n\)-dependent atom laws is invoked.

Choose \(\beta>0\) such that
\[
\alpha+2\beta<c_{\mathrm{TV}},
\qquad
a_n:=n^{-\beta}.
\]
The exact complex-Gaussian smallest-singular-value formula in
\citet[Theorem~1.1]{TaoVu2010} gives
\[
\Lambda_{n,1}^{\mathrm{Gin}}
\sim
\operatorname{Exp}\left(1\right),
\qquad
\mathbb{P}\left(
\Lambda_{n,1}^{\mathrm{Gin}}<a_n
\right)
\leq a_n.
\]
Consider the event
\[
\mathcal{E}_n
:=
\left\{
\max_{1\leq j\leq m_n}
\left|
\Lambda_{n,j}
-
\Lambda_{n,j}^{\mathrm{Gin}}
\right|
\leq
2\delta_n
\right\}
\cap
\left\{
\Lambda_{n,1}^{\mathrm{Gin}}\geq a_n
\right\}.
\]
Then
\(
\mathbb{P}\left(\mathcal{E}_n^c\right)
\leq
2\delta_n+a_n
\rightarrow 0.
\)
Since the Gaussian singular values are increasingly ordered,
\(\Lambda_{n,j}^{\mathrm{Gin}}\geq a_n\) for every \(j\leq m_n\) on
\(\mathcal{E}_n\). Moreover, \(2\delta_n\leq a_n/2\) for all sufficiently large \(n\). Hence,
\[
\Lambda_{n,j}
\geq
\Lambda_{n,j}^{\mathrm{Gin}}-2\delta_n
\geq
\frac{1}{2}\Lambda_{n,j}^{\mathrm{Gin}},
\qquad
1\leq j\leq m_n.
\]
For these indices,
\[
\begin{aligned}
\frac{1}{\Lambda_{n,j}}
-
\frac{1}{\Lambda_{n,j}^{\mathrm{Gin}}}
=
\frac{
\Lambda_{n,j}^{\mathrm{Gin}}-\Lambda_{n,j}
}{
\Lambda_{n,j}\Lambda_{n,j}^{\mathrm{Gin}}
}
\leq
\frac{4\delta_n}{a_n^2}.
\end{aligned}
\]
Consequently, for every fixed \(K<m_n\),
\[
\sum_{j=K+1}^{m_n}D_{n,j}^2
\leq
\sum_{j=K+1}^{m_n}
\frac{1}{\Lambda_{n,j}^{\mathrm{Gin}}}
+
\frac{4m_n\delta_n}{a_n^2}
\text{ on }\mathcal{E}_n.
\]
Our choice of \(\alpha\) and \(\beta\) ensures that
\[
\frac{m_n\delta_n}{a_n^2}
\leq
n^{\alpha-c_{\mathrm{TV}}+2\beta}
\rightarrow 0.
\]

We next estimate the Gaussian sum uniformly in \(n\). Delete \(K\) columns from
\(G_n^{\mathrm{Gin}}\), and denote the resulting \(n\times\left(n-K\right)\) matrix by
\(Z_{n,K}\). Let
\[
0<\Theta_{n,1}\leq\cdots\leq\Theta_{n,n-K}
\]
be the eigenvalues of \(Z_{n,K}^*Z_{n,K}\). Since this Gram matrix is a principal submatrix of
\(\left(G_n^{\mathrm{Gin}}\right)^*G_n^{\mathrm{Gin}}\), Cauchy interlacing gives
\[
\Theta_{n,j}
\leq
s_{j+K}\left(G_n^{\mathrm{Gin}}\right)^2,
\qquad
1\leq j\leq n-K.
\]
Therefore,
\[
\sum_{j=K+1}^{n}
\frac{1}{\Lambda_{n,j}^{\mathrm{Gin}}}
=
\sum_{j=1}^{n-K}
\frac{1}{
n s_{j+K}\left(G_n^{\mathrm{Gin}}\right)^2
}
\leq
\frac{1}{n}
\operatorname{tr}\left(
\left(Z_{n,K}^*Z_{n,K}\right)^{-1}
\right).
\]

To compute the expectation on the right, write
\[
Z_{n,K}
=
\left[
\boldsymbol{Z}_1
\ \cdots\
\boldsymbol{Z}_{n-K}
\right].
\]
For \(1\leq q\leq n-K\), let \(P_{-q}\) be the orthogonal projection onto the span of all columns except
\(\boldsymbol{Z}_q\). The Schur-complement formula gives
\[
\left[
\left(Z_{n,K}^*Z_{n,K}\right)^{-1}
\right]_{qq}
=
\frac{1}{
\boldsymbol{Z}_q^*
\left(I_n-P_{-q}\right)
\boldsymbol{Z}_q
}.
\]
Conditional on the remaining columns, \(I_n-P_{-q}\) is a projection of rank
\(K+1\), and \(\boldsymbol{Z}_q\) is an independent standard complex Gaussian vector. Hence,
\[
\boldsymbol{Z}_q^*
\left(I_n-P_{-q}\right)
\boldsymbol{Z}_q
\sim
\operatorname{Gamma}\left(K+1,1\right).
\]
If \(Y\sim\operatorname{Gamma}\left(K+1,1\right)\), then
\[
\E\left[\frac{1}{Y}\right]
=
\frac{1}{K}.
\]
Summing over the \(n-K\) diagonal entries yields
\[
\E\left[
\sum_{j=K+1}^{n}
\frac{1}{\Lambda_{n,j}^{\mathrm{Gin}}}
\right]
\leq
\frac{1}{n}
\E\left[
\operatorname{tr}\left(
\left(Z_{n,K}^*Z_{n,K}\right)^{-1}
\right)
\right]
=
\frac{n-K}{nK}
\leq
\frac{1}{K}.
\]
Markov's inequality and the preceding coupling now imply that, for every
\(\varepsilon>0\),
\[
\limsup_{n\to\infty}
\mathbb{P}\left(
\sum_{j=K+1}^{m_n}D_{n,j}^2>\varepsilon
\right)
\leq
\frac{2}{K\varepsilon}.
\]

It remains to control the indices beyond the growing hard-edge window. Set
\[
L_{n,j}
:=
s_j\left(\frac{A_n}{\sqrt{n}}\right)^2
=
\frac{s_j\left(A_n\right)^2}{n}.
\]
These are the increasingly ordered eigenvalues of
\(
\frac{1}{n}A_nA_n^*.
\)
For the constant variance profile \(1/n\), the deterministic density in
\citet[Theorem~2.7]{AltErdosKruger2017} is the Marchenko--Pastur density
\[
\varrho_{\mathrm{MP}}\left(x\right)
=
\frac{1}{2\pi}
\sqrt{\frac{4-x}{x}}\,
\mathbf{1}_{\left(0,4\right)}\left(x\right).
\]
The matrix \(A_n/\sqrt{n}\) satisfies the assumptions of that theorem: its variance profile is constant and fully indecomposable, it is square, and the fixed moment bounds for \(\Xi_A\) give their moment condition.

For \(1\leq j\leq n/2\), define the deterministic quantiles by
\[
\int_0^{\gamma_{n,j}}
\varrho_{\mathrm{MP}}\left(x\right)
\,\mathrm{d}x
=
\frac{j}{n}.
\]
Since
\(\varrho_{\mathrm{MP}}\left(x\right)\asymp x^{-1/2}\) uniformly between zero and the median of the distribution, there exist constants \(c,C>0\) such that
\[
c\left(\frac{j}{n}\right)^2
\leq
\gamma_{n,j}
\leq
C\left(\frac{j}{n}\right)^2,
\qquad
1\leq j\leq\frac{n}{2}.
\]
Moreover,
\(\varrho_{\mathrm{MP}}\left(\gamma_{n,j}\right)\) is bounded below uniformly over this range. Thus,
\citet[Theorem~2.7\textup{(iii)} and equation~\textup{(2.11)}]{AltErdosKruger2017} imply that, for every \(\eta>0\) and \(D>0\),
\[
\mathbb{P}\left(
\exists\,1\leq j\leq\frac{n}{2}:
\left|
L_{n,j}-\gamma_{n,j}
\right|
>
\frac{n^\eta}{n}
\left(
\sqrt{\gamma_{n,j}}+\frac{1}{n}
\right)
\right)
\leq
C_{\eta,D}n^{-D}.
\]
The quantile bounds show that the error on this event is at most
\(
C n^{-2+\eta}\left(j+1\right).
\)
Choose \(0<\eta<\alpha\). Uniformly for
\(m_n\leq j\leq n/2\),
\[
\frac{
n^{-2+\eta}\left(j+1\right)
}{
j^2/n^2
}
\leq
C\frac{n^\eta}{j}
\leq
C n^{\eta-\alpha}
\rightarrow 0.
\]
It follows that, with overwhelming probability,
\[
L_{n,j}
\geq
c\frac{j^2}{n^2},
\qquad
m_n\leq j\leq\frac{n}{2}.
\]
Equivalently,
\[
s_j\left(A_n\right)
\geq
c\frac{j}{\sqrt{n}}
\qquad
\text{and}
\qquad
D_{n,j}^2
=
\frac{1}{n^2L_{n,j}}
\leq
\frac{C}{j^2}
\]
throughout this range. At \(j=\lfloor n/2\rfloor\), the same estimate gives
\(L_{n,j}\geq c\). By monotonicity, \(L_{n,j}\geq c\) for every \(j>n/2\), and hence
\(D_{n,j}^2\leq Cn^{-2}\) there. We conclude that, with overwhelming probability,
\[
\sum_{j>m_n}D_{n,j}^2
\leq
C\sum_{j>m_n}^{\lfloor n/2\rfloor}\frac{1}{j^2}
+
\frac{C}{n}
\leq
\frac{C}{m_n}.
\]

Combining the growing-window comparison with the rigidity estimate gives, for every
\(\varepsilon>0\),
\begin{equation}
\lim_{K\to\infty}
\limsup_{n\to\infty}
\mathbb{P}\left(
\sum_{j>K}D_{n,j}^2>\varepsilon
\right)
=
0.
\label{eq:inverse-hard-edge-tail}
\end{equation}

We also verify that the limiting profile belongs to \(\ell^2\). For fixed
\(1\leq K<L\), Gaussian hard-edge convergence and the Portmanteau theorem applied to the nonnegative lower-semicontinuous inverse-sum function give
\[
\begin{aligned}
\E\left[
\sum_{j=K+1}^{L}D_j^2
\right]
\leq
\liminf_{n\to\infty}
\E\left[
\sum_{j=K+1}^{L}
\frac{1}{\Lambda_{n,j}^{\mathrm{Gin}}}
\right]
\leq
\frac{1}{K}.
\end{aligned}
\]
Letting \(L\to\infty\) and using monotone convergence,
\[
\E\left[
\sum_{j>K}D_j^2
\right]
\leq
\frac{1}{K}.
\]
In particular,
\(\sum_{j\geq 2}D_j^2<\infty\) almost surely. Since \(X_1>0\) almost surely, \(D_1<\infty\) almost surely as well, and therefore
\(
\boldsymbol{D}\in\ell^2
\text{ almost surely}.
\)

For every fixed \(K\), the fixed-dimensional part of
\citet[Theorem~6.2]{TaoVu2010}, together with the Gaussian hard-edge limit, gives
\[
\left(
D_{n,1},\ldots,D_{n,K}
\right)
\overset{\mathrm{d}}{\rightarrow}
\left(
D_1,\ldots,D_K
\right).
\]
Let \(\Pi_K\) denote the coordinate projection in \(\ell^2\) onto its first \(K\) coordinates. Then
\[
\Pi_K\boldsymbol{D}_n
\overset{\mathrm{d}}{\rightarrow}
\Pi_K\boldsymbol{D}.
\]

To pass from finite-dimensional convergence to convergence in \(\ell^2\), let
\(\Phi\colon\ell^2\to\mathbb{R}\) satisfy
\[
\left|\Phi\right|\leq 1,
\qquad
\left|
\Phi\left(\boldsymbol{x}\right)
-
\Phi\left(\boldsymbol{y}\right)
\right|
\leq
\left\|
\boldsymbol{x}-\boldsymbol{y}
\right\|_{\ell^2}.
\]
For every \(\zeta>0\),
\[
\left|
\E\left[
\Phi\left(\boldsymbol{D}_n\right)
\right]
-
\E\left[
\Phi\left(\Pi_K\boldsymbol{D}_n\right)
\right]
\right|
\leq
\zeta
+
2\mathbb{P}\left(
\left\|
\left(I-\Pi_K\right)\boldsymbol{D}_n
\right\|_{\ell^2}
>
\zeta
\right),
\]
and the analogous inequality holds for \(\boldsymbol{D}\). Hence,
\[
\begin{aligned}
\limsup_{n\to\infty}
\left|
\E\left[
\Phi\left(\boldsymbol{D}_n\right)
\right]
-
\E\left[
\Phi\left(\boldsymbol{D}\right)
\right]
\right|
&\leq
2\zeta\\
&\quad+
2\limsup_{n\to\infty}
\mathbb{P}\left(
\left\|
\left(I-\Pi_K\right)\boldsymbol{D}_n
\right\|_{\ell^2}
>
\zeta
\right)\\
&\quad+
2\mathbb{P}\left(
\left\|
\left(I-\Pi_K\right)\boldsymbol{D}
\right\|_{\ell^2}
>
\zeta
\right).
\end{aligned}
\]
Equation~\eqref{eq:inverse-hard-edge-tail} and the almost-sure membership
\(\boldsymbol{D}\in\ell^2\) make the two probabilities vanish as \(K\to\infty\). Letting first \(K\to\infty\) and then \(\zeta\downarrow 0\) proves
\(
\boldsymbol{D}_n
\overset{\mathrm{d}}{\rightarrow}
\boldsymbol{D}
\text{ in }\ell^2.
\)

Fix \(K\geq 1\). For \(1\leq a,b\leq K\), the definition
\(C_n=U_n^*B_nV_n\) gives
\[
\left(C_n\right)_{ab}
=
\sum_{i,j=1}^n
\overline{\left(U_n\right)_{ia}}
\left(B_n\right)_{ij}
\left(V_n\right)_{jb}.
\]
Conditional on \(A_n\), the matrices \(U_n\) and \(V_n\) are deterministic, while the entries of \(B_n\) remain independent copies of \(\Xi_B\). Hence,
\[
\E\left[\left(C_n\right)_{ab}\,\middle|\,A_n\right]=0.
\]
For \(1\leq a,b,c,d\leq K\), independence and unit variance give
\[
\begin{aligned}
\E\left[
\left(C_n\right)_{ab}
\overline{\left(C_n\right)_{cd}}
\,\middle|\,
A_n
\right]
&=
\sum_{i,j=1}^n
\overline{\left(U_n\right)_{ia}}
\left(U_n\right)_{ic}
\left(V_n\right)_{jb}
\overline{\left(V_n\right)_{jd}} \\
&=
\left(U_n^*U_n\right)_{ac}
\left(V_n^*V_n\right)_{db}
=
\delta_{ac}\delta_{bd}.
\end{aligned}
\]
The conditional pseudocovariance is
\[
\begin{aligned}
\E\left[
\left(C_n\right)_{ab}
\left(C_n\right)_{cd}
\,\middle|\,
A_n
\right]
&=
\E\left[\Xi_B^2\right]
\left(
\sum_{i=1}^n
\overline{\left(U_n\right)_{ia}}
\overline{\left(U_n\right)_{ic}}
\right)
\left(
\sum_{j=1}^n
\left(V_n\right)_{jb}
\left(V_n\right)_{jd}
\right) \\
&=0.
\end{aligned}
\]
Thus, for every \(n\), the conditional covariance and pseudocovariance of the leading block of \(C_n\) agree with those of a \(K\times K\) standard complex Ginibre matrix. It remains to verify that no individual entry of \(B_n\) contributes macroscopically to this block.

Consider the Hermitian linearization
\[
H_n
:=
\begin{pmatrix}
0 & A_n/\sqrt{n} \\
A_n^*/\sqrt{n} & 0
\end{pmatrix}
\]
and its resolvent
\[
R_n\left(z\right)
:=
\left(H_n-zI_{2n}\right)^{-1},
\qquad
z\in\mathbb{C}_+.
\]
In the notation of \citet[Theorem~1.1 and equation~\textup{(1.10a)}]{AjankiErdosKruger2014}, the ambient dimension is \(2n\) and one may take \(M=n\). The variance matrix of \(H_n\) has entries \(1/n\) in the two off-diagonal blocks and zero elsewhere. Its row sums equal one and its spectrum is contained in
\(\left\{-1,0,1\right\}\), so the eigenvalue \(-1\) is precisely the imprimitive direction covered by that theorem. The moment condition in Assumption~\ref{ass:matrix-atoms} is also the normalized moment condition required there.

Fix \(\gamma>0\) and set
\[
\eta_n:=n^{-1+\gamma}.
\]
For \(\left|E\right|\leq 1\), the entrywise local law gives
\[
\max_{1\leq x\leq 2n}
\left|
\left(R_n\left(E+\mathrm{i}\eta_n\right)\right)_{xx}
-
m_{\mathrm{sc}}\left(E+\mathrm{i}\eta_n\right)
\right|
\leq
n^\delta
\left(
n^{-\gamma/2}
+
n^{-\gamma}
\right)
\]
with probability tending to one faster than any inverse power of \(n\), for every fixed \(\delta>0\). Here \(m_{\mathrm{sc}}\) is the Stieltjes transform of the semicircle law, which is uniformly bounded on the indicated domain.

To use this estimate at the random eigenvalues of \(H_n\), take a deterministic \(n^{-4}\)-net of \(\left[-1,1\right]\). A union bound preserves the preceding probability estimate on the net. Moreover, the resolvent identity gives
\[
\left\|
R_n\left(E+\mathrm{i}\eta_n\right)
-
R_n\left(E'+\mathrm{i}\eta_n\right)
\right\|_{\mathrm{op}}
\leq
\frac{\left|E-E'\right|}{\eta_n^2}.
\]
Since \(n^{-4}\eta_n^{-2}=n^{-2-2\gamma}\), the estimate extends from the net to every \(E\in\left[-1,1\right]\). Consequently, for every \(\delta>0\),
\[
\sup_{\left|E\right|\leq 1}
\max_{1\leq x\leq 2n}
\Im\left(R_n\left(E+\mathrm{i}\eta_n\right)\right)_{xx}
\leq n^\delta
\]
with probability tending to one.

Let \(\boldsymbol{u}_{n,a}\) and \(\boldsymbol{v}_{n,a}\) denote the \(a\)-th columns of \(U_n\) and \(V_n\). The two normalized eigenvectors of \(H_n\) associated with the eigenvalues
\[
\lambda_{n,a}^{\pm}
=
\pm\frac{s_a\left(A_n\right)}{\sqrt{n}}
\]
are
\[
\frac{1}{\sqrt{2}}
\begin{pmatrix}
\boldsymbol{u}_{n,a} \\
\pm\boldsymbol{v}_{n,a}
\end{pmatrix}.
\]
The finite-dimensional hard-edge convergence already established implies
\[
\max_{1\leq a\leq K}
\frac{s_a\left(A_n\right)}{\sqrt{n}}
\xrightarrow{\mathbb{P}}0,
\]
so these eigenvalues belong to \(\left[-1,1\right]\) with probability tending to one. If \(\boldsymbol{q}\) is a normalized eigenvector of \(H_n\) with eigenvalue \(\lambda\), the spectral decomposition of the resolvent yields
\[
\Im\left(R_n\left(\lambda+\mathrm{i}\eta_n\right)\right)_{xx}
=
\sum_{\mu\in\operatorname{spec}\left(H_n\right)}
\frac{\eta_n\left|\boldsymbol{q}_{\mu}\left(x\right)\right|^2}
{\left(\mu-\lambda\right)^2+\eta_n^2}
\geq
\frac{\left|\boldsymbol{q}\left(x\right)\right|^2}{\eta_n}.
\]
Therefore, for every \(\varepsilon>0\),
\begin{equation}
\max_{\substack{1\leq i\leq n\\1\leq a\leq K}}
\left|\left(U_n\right)_{ia}\right|
+
\max_{\substack{1\leq j\leq n\\1\leq b\leq K}}
\left|\left(V_n\right)_{jb}\right|
\leq
n^{-1/2+\varepsilon}
\label{eq:fixed-singular-vector-delocalization}
\end{equation}
with probability tending to one. Indeed, choose \(\gamma,\delta>0\) such that
\(\left(\gamma+\delta\right)/2<\varepsilon\) and combine the two preceding displays.

We now apply a conditional central limit theorem. Let
\(T=\left(t_{ab}\right)_{a,b=1}^K\in\mathbb{C}^{K\times K}\) be deterministic and define the real linear functional
\[
L_n\left(T\right)
:=
\Re
\sum_{a,b=1}^K
t_{ab}\left(C_n\right)_{ab}.
\]
Then
\[
L_n\left(T\right)
=
\sum_{i,j=1}^n
\Re\left(
W_{n,ij}\left(T\right)
\left(B_n\right)_{ij}
\right),
\]
where
\[
W_{n,ij}\left(T\right)
:=
\sum_{a,b=1}^K
t_{ab}
\overline{\left(U_n\right)_{ia}}
\left(V_n\right)_{jb}.
\]
Unitarity gives the exact normalization
\[
\begin{aligned}
\sum_{i,j=1}^n
\left|W_{n,ij}\left(T\right)\right|^2
&=
\sum_{a,b,c,d=1}^K
t_{ab}\overline{t_{cd}}
\left(
\sum_{i=1}^n
\overline{\left(U_n\right)_{ia}}
\left(U_n\right)_{ic}
\right)
\left(
\sum_{j=1}^n
\left(V_n\right)_{jb}
\overline{\left(V_n\right)_{jd}}
\right) \\
&=
\sum_{a,b=1}^K
\left|t_{ab}\right|^2
=
\left\|T\right\|_{\mathrm{HS}}^2.
\end{aligned}
\]
Moreover,
\[
\left|W_{n,ij}\left(T\right)\right|
\leq
\left\|T\right\|_{\mathrm{op}}
\left(
\sum_{a=1}^K
\left|\left(U_n\right)_{ia}\right|^2
\right)^{1/2}
\left(
\sum_{b=1}^K
\left|\left(V_n\right)_{jb}\right|^2
\right)^{1/2}.
\]
It follows from \eqref{eq:fixed-singular-vector-delocalization} that
\[
\sup_{\left\|T\right\|_{\mathrm{HS}}=1}
\max_{1\leq i,j\leq n}
\left|W_{n,ij}\left(T\right)\right|
\xrightarrow{\mathbb{P}}0.
\]

The circular second-moment condition gives, conditional on \(A_n\),
\[
\Var\left(
L_n\left(T\right)
\,\middle|\,
A_n
\right)
=
\frac{1}{2}
\sum_{i,j=1}^n
\left|W_{n,ij}\left(T\right)\right|^2
=
\frac{1}{2}
\left\|T\right\|_{\mathrm{HS}}^2.
\]
The third moment of \(\Xi_B\) is finite, and therefore
\[
\begin{aligned}
\sum_{i,j=1}^n
\E\left[
\left|
\Re\left(
W_{n,ij}\left(T\right)
\left(B_n\right)_{ij}
\right)
\right|^3
\,\middle|\,
A_n
\right] 
\leq
\E\left[\left|\Xi_B\right|^3\right]
\max_{1\leq i,j\leq n}
\left|W_{n,ij}\left(T\right)\right|
\sum_{i,j=1}^n
\left|W_{n,ij}\left(T\right)\right|^2
\xrightarrow{\mathbb{P}}0.
\end{aligned}
\]
Thus the conditional Lyapunov condition holds for every real linear functional of the leading block. Conditional Lindeberg--Feller and Cram\'er--Wold give, for every bounded Lipschitz function
\(g\colon\mathbb{C}^{K\times K}\to\mathbb{R}\),
\begin{equation}
\E\left[
g\left(
\left(C_n\right)_{\left[K\right],\left[K\right]}
\right)
\,\middle|\,
A_n
\right]
\xrightarrow{\mathbb{P}}
\E\left[g\left(G_K\right)\right].
\label{eq:conditional-rotated-block-limit}
\end{equation}
The conditional second-moment identity
\[
\E\left[
\left\|
\left(C_n\right)_{\left[K\right],\left[K\right]}
\right\|_{\mathrm{HS}}^2
\,\middle|\,
A_n
\right]
=
K^2
\]
provides the required tightness in this finite-dimensional Cram\'er--Wold argument.

Finally, \(\boldsymbol{D}_n\) is measurable with respect to \(A_n\). If
\(f\colon\ell^2\to\mathbb{R}\) is bounded and continuous and \(g\) is bounded and Lipschitz, then \eqref{eq:conditional-rotated-block-limit} and bounded convergence in probability imply
\[
\begin{aligned}
\E\left[
f\left(\boldsymbol{D}_n\right)
g\left(
\left(C_n\right)_{\left[K\right],\left[K\right]}
\right)
\right] =
\E\left[
f\left(\boldsymbol{D}_n\right)
\E\left[
g\left(
\left(C_n\right)_{\left[K\right],\left[K\right]}
\right)
\,\middle|\,
A_n
\right]
\right] 
\rightarrow
\E\left[f\left(\boldsymbol{D}\right)\right]
\E\left[g\left(G_K\right)\right].
\end{aligned}
\]
Consequently,
\begin{equation}
\left(
\boldsymbol{D}_n,
\left(C_n\right)_{\left[K\right],\left[K\right]}
\right)
\overset{\mathrm{d}}{\rightarrow}
\left(
\boldsymbol{D},
G_K
\right)
\qquad
\text{in }
\ell^2\times\mathbb{C}^{K\times K},
\label{eq:finite-rotated-block-limit}
\end{equation}
where \(G_K\) is independent of \(\boldsymbol{D}\).

For \(1\leq K\leq n\), define
\[
D_n^{[K]}
:=
\operatorname{diag}\left(
D_{n,1},\ldots,D_{n,K},0,\ldots,0
\right)
\]
and
\[
Q_n^{[K]}\left(w\right)
:=
\det\left(I_n-wD_n^{[K]}C_n\right)
=
\det\left(
I_K-w\operatorname{diag}\left(D_{n,1},\ldots,D_{n,K}\right)
\left(C_n\right)_{\left[K\right],\left[K\right]}
\right).
\]
The second identity follows because the last \(n-K\) rows of
\(D_n^{[K]}C_n\) vanish.

The principal-minor expansions of \(Q_n\) and \(Q_n^{[K]}\) are
\[
Q_n\left(w\right)
=
\sum_{S\subseteq\left[n\right]}
\left(-w\right)^{\left|S\right|}
\left(\prod_{j\in S}D_{n,j}\right)
\det\left(\left(C_n\right)_{S,S}\right)
\]
and
\[
Q_n^{[K]}\left(w\right)
=
\sum_{S\subseteq\left[K\right]}
\left(-w\right)^{\left|S\right|}
\left(\prod_{j\in S}D_{n,j}\right)
\det\left(\left(C_n\right)_{S,S}\right).
\]
The conditional orthogonality established above therefore gives the exact identity
\begin{equation}
\E\left[
\left|
Q_n\left(w\right)-Q_n^{[K]}\left(w\right)
\right|^2
\,\middle|\,
A_n
\right]
=
\sum_{\substack{S\subseteq\left[n\right]\\S\nsubseteq\left[K\right]}}
\left|S\right|!\,
\left|w\right|^{2\left|S\right|}
\prod_{j\in S}D_{n,j}^2.
\label{eq:conditional-determinant-tail}
\end{equation}

We now convert this pointwise identity into a compact-uniform estimate. Fix
\(0<R<\rho\), and put \(t:=\rho^2\). If \(f\) is holomorphic on a neighborhood of the closed disc of radius \(\rho\), then Cauchy's formula and the Cauchy--Schwarz inequality give
\[
\sup_{\left|w\right|\leq R}
\left|f\left(w\right)\right|^2
\leq
\left(\frac{\rho}{\rho-R}\right)^2
\frac{1}{2\pi}
\int_0^{2\pi}
\left|f\left(\rho e^{\mathrm{i}\theta}\right)\right|^2
\mathrm{d}\theta.
\]
Applying this inequality conditionally on \(A_n\), followed by
\eqref{eq:conditional-determinant-tail}, yields
\[
\begin{aligned}
&\E\left[
\sup_{\left|w\right|\leq R}
\left|
Q_n\left(w\right)-Q_n^{[K]}\left(w\right)
\right|^2
\,\middle|\,
A_n
\right] \leq
\left(\frac{\rho}{\rho-R}\right)^2
\sum_{\substack{S\subseteq\left[n\right]\\S\nsubseteq\left[K\right]}}
\left|S\right|!\,
t^{\left|S\right|}
\prod_{j\in S}D_{n,j}^2.
\end{aligned}
\]

For \(0\leq J\leq K\), write
\[
\Delta_{n,J}:=\sum_{j>J}D_{n,j}^2
\]
and let
\[
e_a^{\left(n,J\right)}
:=
e_a\left(D_{n,1}^2,\ldots,D_{n,J}^2\right)
\]
be the \(a\)-th elementary symmetric polynomial in the first \(J\) squared coordinates. Split each set \(S\nsubseteq\left[K\right]\) uniquely as
\[
S=A\mathbin{\dot{\cup}}T,
\qquad
A\subseteq\left[J\right],
\qquad
T\subseteq\left\{J+1,\ldots,n\right\},
\]
where \(T\cap\left\{K+1,\ldots,n\right\}\neq\varnothing\). If
\(\left|T\right|=b\geq 1\), then
\[
\sum_{\substack{
T\subseteq\left\{J+1,\ldots,n\right\}\\
\left|T\right|=b\\
T\cap\left\{K+1,\ldots,n\right\}\neq\varnothing
}}
\prod_{j\in T}D_{n,j}^2
\leq
\Delta_{n,K}
\frac{\Delta_{n,J}^{b-1}}{\left(b-1\right)!}.
\]
Indeed, one may first select one element of \(T\) lying above \(K\), then sum over the remaining \(b-1\) elements; possible multiple counting only enlarges the resulting upper bound.

Consequently, whenever \(t\Delta_{n,J}<1\),
\[
\begin{aligned}
\sum_{\substack{S\subseteq\left[n\right]\\S\nsubseteq\left[K\right]}}
\left|S\right|!\,
t^{\left|S\right|}
\prod_{j\in S}D_{n,j}^2 
&\leq
\Delta_{n,K}
\sum_{a=0}^J
e_a^{\left(n,J\right)}t^a
\sum_{b=1}^{\infty}
\frac{\left(a+b\right)!}{\left(b-1\right)!}
t^b\Delta_{n,J}^{b-1} =
t\Delta_{n,K}
\sum_{a=0}^J
\left(a+1\right)!
e_a^{\left(n,J\right)}t^a
\left(1-t\Delta_{n,J}\right)^{-a-2},
\end{aligned}
\]
where the last equality uses
\[
\sum_{b=1}^{\infty}
\frac{\left(a+b\right)!}{\left(b-1\right)!}
x^{b-1}
=
\left(a+1\right)!
\left(1-x\right)^{-a-2},
\qquad
\left|x\right|<1.
\]
We have therefore obtained
\begin{equation}
\begin{aligned}
&\E\left[
\sup_{\left|w\right|\leq R}
\left|
Q_n\left(w\right)-Q_n^{[K]}\left(w\right)
\right|^2
\,\middle|\,
A_n
\right] \leq
\left(\frac{\rho}{\rho-R}\right)^2
t\Delta_{n,K}
\sum_{a=0}^J
\left(a+1\right)!
e_a^{\left(n,J\right)}t^a
\left(1-t\Delta_{n,J}\right)^{-a-2}.
\end{aligned}
\label{eq:compact-determinant-tail-bound}
\end{equation}

To remove the random factors on the right-hand side, fix \(B>0\) and consider the event
\[
\mathcal{E}_{n,J,B}
:=
\left\{
t\Delta_{n,J}\leq\frac{1}{2},
\quad
\max_{1\leq j\leq J}D_{n,j}\leq B
\right\}.
\]
On this event,
\[
e_a^{\left(n,J\right)}
\leq
\binom{J}{a}B^{2a}
\]
and
\[
\left(1-t\Delta_{n,J}\right)^{-a-2}
\leq
2^{a+2}.
\]
Thus, the sum in \eqref{eq:compact-determinant-tail-bound} is bounded by the deterministic finite constant
\[
H\left(J,B,t\right)
:=
\sum_{a=0}^J
\left(a+1\right)!
\binom{J}{a}
B^{2a}t^a2^{a+2}.
\]

For \(\varepsilon,\delta>0\), conditional Markov's inequality now gives
\[
\begin{aligned}
\mathbb{P}\left(
\sup_{\left|w\right|\leq R}
\left|
Q_n\left(w\right)-Q_n^{[K]}\left(w\right)
\right|
>\varepsilon
\right) 
\leq
\mathbb{P}\left(\mathcal{E}_{n,J,B}^c\right)
+
\mathbb{P}\left(\Delta_{n,K}>\delta\right)
+
\frac{1}{\varepsilon^2}
\left(\frac{\rho}{\rho-R}\right)^2
t\delta H\left(J,B,t\right).
\end{aligned}
\]
The inverse hard-edge estimate proved above implies
\[
\lim_{J\to\infty}
\limsup_{n\to\infty}
\mathbb{P}\left(
t\Delta_{n,J}>\frac{1}{2}
\right)
=
0.
\]
For each fixed \(J\), the convergence of the first \(J\) hard-edge coordinates gives tightness of
\(\max_{j\leq J}D_{n,j}\). We may therefore first choose \(J\), then \(B\), so that the first probability on the right-hand side is arbitrarily small in the limit superior over \(n\). After these choices, take \(\delta>0\) sufficiently small and then \(K\geq J\) sufficiently large. The same inverse hard-edge estimate makes the second probability arbitrarily small. We conclude that, for every \(R,\varepsilon>0\),
\begin{equation}
\lim_{K\to\infty}
\limsup_{n\to\infty}
\mathbb{P}\left(
\sup_{\left|w\right|\leq R}
\left|
Q_n\left(w\right)-Q_n^{[K]}\left(w\right)
\right|
>\varepsilon
\right)
=
0.
\label{eq:finite-n-compact-truncation}
\end{equation}

We next construct the limiting entire function from the finite determinants
\(Q^{[K]}\). Recall that
\[
\boldsymbol{D}
=
\left(D_1,D_2,\ldots\right)
\in\ell^2
\qquad\text{almost surely}.
\]
Set
\[
\Delta_K:=\sum_{j>K}D_j^2.
\]
Then \(\Delta_K\downarrow 0\) almost surely. For \(L>K\), the polynomial \(Q^{[K]}\) can be written in dimension \(L\) as
\[
Q^{[K]}\left(w\right)
=
\det\left(
I_L
-
w\operatorname{diag}\left(
D_1,\ldots,D_K,0,\ldots,0
\right)G_L
\right).
\]
Conditional on \(\boldsymbol{D}\), the principal minors of \(G_L\) satisfy the same orthogonality identity used in
\eqref{eq:conditional-determinant-tail}. Repeating the preceding argument therefore gives, for \(J\leq K<L\) and
\(t\Delta_J<1\),
\[
\begin{aligned}
&\E\left[
\sup_{\left|w\right|\leq R}
\left|
Q^{[L]}\left(w\right)-Q^{[K]}\left(w\right)
\right|^2
\,\middle|\,
\boldsymbol{D}
\right] \leq
\left(\frac{\rho}{\rho-R}\right)^2
t\Delta_K
\sum_{a=0}^J
\left(a+1\right)!
e_a\left(D_1^2,\ldots,D_J^2\right)t^a
\left(1-t\Delta_J\right)^{-a-2}.
\end{aligned}
\]
The same localization argument, now using
\(\Delta_J\to 0\) almost surely and the almost-sure finiteness of
\(D_1,\ldots,D_J\), shows that
\begin{equation}
\lim_{K\to\infty}
\sup_{L\geq K}
\mathbb{P}\left(
\sup_{\left|w\right|\leq R}
\left|
Q^{[L]}\left(w\right)-Q^{[K]}\left(w\right)
\right|
>\varepsilon
\right)
=
0
\label{eq:limiting-compact-truncation}
\end{equation}
for every \(R,\varepsilon>0\).

The closed discs of integer radius exhaust \(\mathbb{C}\), so
\eqref{eq:limiting-compact-truncation} makes
\(\left(Q^{[K]}\right)_{K\geq 1}\) Cauchy in probability for the compact-open topology. To see the existence of the limit directly, choose an increasing sequence
\(\left(K_m\right)_{m\geq 1}\) such that
\[
\sup_{L\geq K_m}
\mathbb{P}\left(
\sup_{\left|w\right|\leq m}
\left|
Q^{[L]}\left(w\right)-Q^{[K_m]}\left(w\right)
\right|
>
2^{-m}
\right)
\leq
2^{-m}.
\]
Taking \(L=K_{m+1}\) and applying the Borel--Cantelli lemma shows that
\(\left(Q^{[K_m]}\right)\) is almost surely Cauchy uniformly on every compact subset of
\(\mathbb{C}\). Its limit is therefore an entire function, which we denote by \(Q\). Equation
\eqref{eq:limiting-compact-truncation} then implies that the full sequence satisfies
\[
Q^{[K]}
\overset{\mathbb{P}}{\rightarrow}
Q
\qquad\text{in }\operatorname{Hol}\left(\mathbb{C}\right).
\]
Since \(Q^{[K]}\left(0\right)=1\) for every \(K\), continuity of evaluation at the origin gives
\[
Q\left(0\right)=1
\qquad\text{almost surely}.
\]

For each fixed \(K\), the joint convergence of the hard-edge coordinates and the rotated block in
\eqref{eq:finite-rotated-block-limit}, followed by the continuous mapping theorem, gives
\[
Q_n^{[K]}
\overset{\mathrm{d}}{\rightarrow}
Q^{[K]}
\qquad\text{in }\operatorname{Hol}\left(\mathbb{C}\right).
\]
Combining this fixed-\(K\) convergence with
\eqref{eq:finite-n-compact-truncation} and
\(Q^{[K]}\to Q\) in probability, the converging-together argument yields
\[
Q_n
\overset{\mathrm{d}}{\rightarrow}
Q
\qquad\text{in }\operatorname{Hol}\left(\mathbb{C}\right).
\]
\end{proof}

\subsection{Second-moment universality at the origin}
\label{subsec:microscopic-spectrum}

Define the origin-scale eigenvalue point process
\[
\mathcal{Z}_n
:=
\sum_{\lambda\in\operatorname{spec}\left(M_n\right)}
\delta_{\sqrt{n}\lambda},
\qquad
M_n=A_nB_n^{-1},
\]
where eigenvalues are counted according to their algebraic multiplicities. Let
\(
0<R_{n,1}\leq\cdots\leq R_{n,n}
\)
be the ordered moduli of the eigenvalues of \(M_n\), and recall that
\(
0<R_1<R_2<\cdots
\)
denote the ordered moduli of the points of
\(\operatorname{Gin}_{\infty}\).

\begin{theorem}[Microscopic second-moment universality]
\label{thm:microscopic-spectrum}
Under Assumption~\ref{ass:matrix-atoms}, the following statements hold.

\begin{enumerate}[label=\textup{(\roman*)},leftmargin=*]
\item The rescaled spectrum converges to the infinite complex Ginibre process:
\[
\mathcal{Z}_n
\overset{\mathrm{d}}{\rightarrow}
\operatorname{Gin}_{\infty}
\qquad
\text{in }\mathcal{N}\left(\mathbb{C}\right).
\]

\item For every fixed \(k\geq 1\),
\[
\left(
\sqrt{n}R_{n,1},
\ldots,
\sqrt{n}R_{n,k}
\right)
\overset{\mathrm{d}}{\rightarrow}
\left(
R_1,\ldots,R_k
\right).
\]

\item For every fixed \(k\geq 1\),
\[
\left(
\frac{R_{n,n}}{\sqrt{n}},
\ldots,
\frac{R_{n,n-k+1}}{\sqrt{n}}
\right)
\overset{\mathrm{d}}{\rightarrow}
\left(
R_1^{-1},\ldots,R_k^{-1}
\right).
\]

\item In particular,
\[
\sqrt{n}\rho_{\min}\left(M_n\right)
\overset{\mathrm{d}}{\rightarrow}
R_1,
\qquad
\frac{\rho_{\max}\left(M_n\right)}{\sqrt{n}}
\overset{\mathrm{d}}{\rightarrow}
R_1^{-1}.
\]

\item If \(D\subset\mathbb{C}\) is a bounded Borel set whose boundary has zero planar Lebesgue measure, then
\[
\mathcal{Z}_n\left(D\right)
\overset{\mathrm{d}}{\rightarrow}
\operatorname{Gin}_{\infty}\left(D\right).
\]
\end{enumerate}

More explicitly, let
\(\left(\Gamma_j\right)_{j\geq 1}\) be independent random variables satisfying
\[
\Gamma_j\sim\operatorname{Gamma}\left(j,1\right).
\]
The Kostlan representation gives the equality in distribution of unordered point configurations
\[
\left\{
\left|Z\right|^2:
Z\in\operatorname{Gin}_{\infty}
\right\}
\overset{\mathrm{d}}{=}
\left\{
\Gamma_j:
j\geq 1
\right\};
\]
see \citet[Theorem~4.7.3 and Section~7.2]{HoughKrishnapurPeresVirag2009}.
Consequently, \(\left(R_1,\ldots,R_k\right)\) are the first \(k\) order statistics of
\[
\left\{
\sqrt{\Gamma_j}:
j\geq 1
\right\}.
\]

For \(a>0\), let
\[
D_a:=\left\{z\in\mathbb{C}:\left|z\right|\leq a\right\}.
\]
Then
\[
\operatorname{Gin}_{\infty}\left(D_a\right)
\overset{\mathrm{d}}{=}
\sum_{j=1}^{\infty}I_j\left(a\right),
\]
where the variables \(\left(I_j\left(a\right)\right)_{j\geq 1}\) are independent and
\[
I_j\left(a\right)
\sim
\operatorname{Bernoulli}\left(p_j\left(a\right)\right),
\qquad
p_j\left(a\right)
=
\Pr\left(\Gamma_j\leq a^2\right)
=
1-e^{-a^2}
\sum_{\ell=0}^{j-1}
\frac{a^{2\ell}}{\ell!}.
\]
\end{theorem}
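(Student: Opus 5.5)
The plan is to push Theorem~\ref{thm:determinant-field} through the zero-counting map, identify the law of \(\mathcal{Z}\left(Q\right)\) with a single explicit computation, and then read off (ii)--(v) as continuous functionals.

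\textbf{Step 1: zeros of \(Q_n\) converge to zeros of \(Q\).} Let \(\mathcal{H}_*\subset\operatorname{Hol}\left(\mathbb{C}\right)\) be the set of entire functions that are not identically zero. On \(\mathcal{H}_*\), the map \(f\mapsto\mathcal{Z}\left(f\right)\in\mathcal{N}\left(\mathbb{C}\right)\) is continuous for the vague topology. This is Hurwitz's theorem: for \(g\in C_c\left(\mathbb{C}\right)\), surround the support of \(g\) by a disc whose boundary avoids the zeros of the limit. Then zeros converge with multiplicity inside, via the argument principle.

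Theorem~\ref{thm:determinant-field} gives \(Q_n\overset{\mathrm{d}}{\rightarrow}Q\), and \(Q\left(0\right)=1\) almost surely, so \(Q\in\mathcal{H}_*\) almost surely. The continuous mapping theorem, applied through a Skorokhod representation, then gives
\[
\mathcal{Z}_n=\mathcal{Z}\left(Q_n\right)\overset{\mathrm{d}}{\rightarrow}\mathcal{Z}\left(Q\right).
\]
The first equality is \eqref{eq:determinant-zeros}.

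\textbf{Step 2: identify the law of \(\mathcal{Z}\left(Q\right)\).} The law of \(Q\) is determined by \(\left(X_j\right)\) and \(G_\infty\) alone. In particular, it does not depend on \(\Xi_A\) or \(\Xi_B\). It therefore suffices to compute the limit in one admissible model, and I take \(\Xi_A,\Xi_B\) standard complex Gaussian; this law satisfies Assumption~\ref{ass:matrix-atoms}.

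In that case \(A_nB_n^{-1}\) is the spherical ensemble. Its eigenvalues form a determinantal process with kernel proportional to
\[
\frac{\left(1+z\overline{w}\right)^{n-1}}{\left(1+\left|z\right|^2\right)^{\left(n+1\right)/2}\left(1+\left|w\right|^2\right)^{\left(n+1\right)/2}},
\]
with explicit normalization \(n/\pi\) (Krishnapur). After the substitution \(z\mapsto z/\sqrt{n}\), the rescaled kernel converges locally uniformly to \(K_\infty\); the phase factors are harmless because the process is invariant under conjugation of the kernel by unimodular functions. Locally uniform kernel convergence implies convergence of all correlation functions on compacts, hence convergence of the determinantal point processes in distribution. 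So \(\mathcal{Z}\left(Q\right)\overset{\mathrm{d}}{=}\operatorname{Gin}_\infty\), which proves (i).

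\textbf{Step 3: consequences (v), (ii), (iv) and (iii).}

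For (v): \(\operatorname{Gin}_\infty\) has intensity \(\pi^{-1}\,\mathrm{d}z\), so \(\operatorname{Gin}_\infty\left(\partial D\right)=0\) almost surely whenever \(\partial D\) is Lebesgue-null. The standard portmanteau theorem for point processes then gives (v), jointly for finitely many such sets.

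For (ii): note that \(\left\{\sqrt{n}R_{n,k}\leq a\right\}=\left\{\mathcal{Z}_n\left(D_a\right)\geq k\right\}\). Joint convergence of the counts \(\left(\mathcal{Z}_n\left(D_{a_i}\right)\right)_i\) for finitely many radii \(a_i\) gives convergence of the joint distribution functions of \(\left(\sqrt{n}R_{n,1},\ldots,\sqrt{n}R_{n,k}\right)\). The limit vector \(\left(R_1,\ldots,R_k\right)\) is almost surely finite and has a continuous law, because \(\operatorname{Gin}_\infty\) has infinitely many points with almost surely distinct moduli. This yields (ii), and (iv) is its \(k=1\) case together with the \(k=1\) case of (iii).

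For (iii): Assumption~\ref{ass:matrix-atoms} is symmetric in \(A\) and \(B\), so (ii) applies to \(B_nA_n^{-1}\). Spectral inversion \eqref{eq:spectral-inversion} sends the \(k\) smallest moduli of \(B_nA_n^{-1}\) to the reciprocals of the \(k\) largest moduli of \(M_n\). The continuous map \(x\mapsto1/x\) on \(\left(0,\infty\right)^k\) then gives (iii).

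\textbf{Step 4: the explicit descriptions.} The Kostlan representation is quoted from \citet{HoughKrishnapurPeresVirag2009}. The order-statistics description of \(\left(R_1,\ldots,R_k\right)\) follows immediately from it. For the disc counts, \(\operatorname{Gin}_\infty\left(D_a\right)=\sum_j\mathbf{1}\left\{\Gamma_j\leq a^2\right\}\) is a sum of independent indicators. The formula for \(p_j\left(a\right)\) is the Gamma distribution function expressed as a Poisson tail.

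\textbf{Expected main obstacle.} The step I expect to need the most care is Step 2. One must verify the spherical-ensemble kernel and its normalization, and justify the passage from kernel convergence to process convergence. I would do this by citing the Gaussian spherical-ensemble result for the \(\sqrt{n}\)-scaled limit at the origin directly if available, and otherwise compute \(\left(1+z\overline{w}/n\right)^{n-1}\to e^{z\overline{w}}\) uniformly on compacts. The Hurwitz continuity in Step 1 is routine but needs the almost sure nonvanishing \(Q\not\equiv0\), which is supplied by \(Q\left(0\right)=1\).
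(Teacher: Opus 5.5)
Your proposal is correct and follows essentially the same route as the paper: continuity of the zero-counting map at nonzero entire functions combined with the continuous mapping theorem, identification of $\mathcal{Z}(Q)$ by specializing to Gaussian atoms and taking the $\sqrt{n}$-scaled limit of the spherical-ensemble kernel, (iii) by interchanging $A_n$ and $B_n$ and using spectral inversion, (v) via the Lebesgue-null boundary, and the Kostlan representation for the explicit laws. The only variation is in (ii): you use joint convergence of the disc counts $\mathcal{Z}_n(D_{a_i})$ and of the joint distribution functions, whereas the paper proves deterministic continuity of the first $k$ ordered moduli under vague convergence; both arguments are valid.
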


\begin{proof}
By the determinant identity established in
\eqref{eq:determinant-zeros},
\[
\mathcal{Z}_n
=
\mathcal{Z}\left(Q_n\right).
\]
Theorem~\ref{thm:determinant-field} gives
\[
Q_n
\overset{\mathrm{d}}{\rightarrow}
Q
\qquad
\text{in }\operatorname{Hol}\left(\mathbb{C}\right),
\]
and \(Q\left(0\right)=1\) almost surely. In particular, the limiting function is almost surely not identically zero.

We first recall the relevant continuity property. Suppose that
\(f_m,f\in\operatorname{Hol}\left(\mathbb{C}\right)\),
that \(f_m\to f\) uniformly on compact sets, and that \(f\not\equiv 0\).
Fix a compact set \(K\subset\mathbb{C}\). Since the zeros of \(f\) are isolated, one may choose a bounded open set \(U\) containing \(K\) such that \(f\) has no zeros on \(\partial U\). The finitely many zeros of \(f\) in \(\overline{U}\) can be enclosed in pairwise disjoint discs whose boundaries contain no zeros of \(f\). Uniform convergence on these boundaries and Rouché's theorem imply that, for all sufficiently large \(m\), each disc contains the same number of zeros of \(f_m\) and \(f\), counted with multiplicity. Outside these discs, \(f\) is bounded away from zero, so \(f_m\) has no additional zeros there. It follows that
\[
\mathcal{Z}\left(f_m\right)
\rightarrow
\mathcal{Z}\left(f\right)
\]
vaguely. Hence the map
\(
f\longmapsto\mathcal{Z}\left(f\right)
\)
is continuous at every nonzero entire function.

The continuous mapping theorem therefore yields
\begin{equation}
\mathcal{Z}_n
=
\mathcal{Z}\left(Q_n\right)
\overset{\mathrm{d}}{\rightarrow}
\mathcal{Z}\left(Q\right)
\qquad
\text{in }\mathcal{N}\left(\mathbb{C}\right).
\label{eq:zero-process-preliminary-limit}
\end{equation}
It remains to identify the law of the point process on the right-hand side.

For this purpose, specialize Assumption~\ref{ass:matrix-atoms} to independent standard complex Ginibre matrices
\(A_n^{\mathrm{Gin}}\) and \(B_n^{\mathrm{Gin}}\), and define
\[
M_n^{\mathrm{Gin}}
:=
A_n^{\mathrm{Gin}}
\left(B_n^{\mathrm{Gin}}\right)^{-1}.
\]
The determinant-field theorem applies to this choice of atom laws and gives the same limiting random entire function \(Q\). Therefore,
\[
\sum_{\lambda\in\operatorname{spec}\left(M_n^{\mathrm{Gin}}\right)}
\delta_{\sqrt{n}\lambda}
\overset{\mathrm{d}}{\rightarrow}
\mathcal{Z}\left(Q\right).
\]

On the other hand, the eigenvalues of \(M_n^{\mathrm{Gin}}\) form the spherical ensemble. With respect to planar Lebesgue measure, its determinantal kernel is
\[
K_n^{\mathrm{sph}}\left(z,w\right)
=
\frac{n}{\pi}
\frac{
\left(1+z\overline{w}\right)^{n-1}
}{
\left(1+\left|z\right|^2\right)^{\left(n+1\right)/2}
\left(1+\left|w\right|^2\right)^{\left(n+1\right)/2}
};
\]
see \citet[equation~\textup{(2.3)}]{ChafaiGarciaZeladaXu2026}. After the change of scale
\(z\mapsto z/\sqrt{n}\), the corresponding kernel becomes
\[
\begin{aligned}
\frac{1}{n}
K_n^{\mathrm{sph}}
\left(
\frac{z}{\sqrt{n}},
\frac{w}{\sqrt{n}}
\right)
&=
\frac{1}{\pi}
\frac{
\left(
1+\dfrac{z\overline{w}}{n}
\right)^{n-1}
}{
\left(
1+\dfrac{\left|z\right|^2}{n}
\right)^{\left(n+1\right)/2}
\left(
1+\dfrac{\left|w\right|^2}{n}
\right)^{\left(n+1\right)/2}
} \\
&\rightarrow
\frac{1}{\pi}
\exp\left(
z\overline{w}
-\frac{\left|z\right|^2}{2}
-\frac{\left|w\right|^2}{2}
\right)
=
K_{\infty}\left(z,w\right),
\end{aligned}
\]
uniformly for \(\left(z,w\right)\) in compact subsets of
\(\mathbb{C}^2\). By the standard convergence criterion for determinantal point processes, for example
\citet[Proposition~3.10]{ShiraiTakahashi2003},
\[
\sum_{\lambda\in\operatorname{spec}\left(M_n^{\mathrm{Gin}}\right)}
\delta_{\sqrt{n}\lambda}
\overset{\mathrm{d}}{\rightarrow}
\operatorname{Gin}_{\infty}.
\]
Uniqueness of weak limits in
\(\mathcal{N}\left(\mathbb{C}\right)\) now gives
\[
\mathcal{Z}\left(Q\right)
\overset{\mathrm{d}}{=}
\operatorname{Gin}_{\infty}.
\]
Combining this identity with
\eqref{eq:zero-process-preliminary-limit} proves part \textup{(i)}.

We first record the continuity property needed to pass from the point process to its first finitely many ordered moduli. Let
\(\mu_m,\mu\in\mathcal{N}\left(\mathbb{C}\right)\) satisfy
\(\mu_m\to\mu\) vaguely. Suppose that \(\mu\) has no atom at the origin and that its first \(k+1\) ordered moduli
\[
0<r_1<\cdots<r_k<r_{k+1}
\]
are finite and distinct. Fix \(\varepsilon>0\). For each \(1\leq j\leq k\), choose
\[
r_j-\varepsilon<a_j<r_j<b_j<r_j+\varepsilon
\]
so that neither circle \(\left\{z:\left|z\right|=a_j\right\}\) nor
\(\left\{z:\left|z\right|=b_j\right\}\) contains an atom of \(\mu\), and so that
\[
\mu\left(D_{a_j}\right)=j-1,
\qquad
\mu\left(D_{b_j}\right)=j.
\]
Such a choice is possible because \(\mu\) is locally finite and the moduli
\(r_1,\ldots,r_{k+1}\) are distinct. Since \(D_{a_j}\) and \(D_{b_j}\) are relatively compact continuity sets for \(\mu\), vague convergence gives
\[
\mu_m\left(D_{a_j}\right)\rightarrow j-1,
\qquad
\mu_m\left(D_{b_j}\right)\rightarrow j.
\]
The quantities on the left are integer valued. Hence, for all sufficiently large \(m\),
\[
\mu_m\left(D_{a_j}\right)=j-1,
\qquad
\mu_m\left(D_{b_j}\right)=j,
\]
and the \(j\)-th ordered modulus \(r_j\left(\mu_m\right)\) satisfies
\(
a_j<r_j\left(\mu_m\right)\leq b_j.
\)

As \(\varepsilon>0\) was arbitrary,
\[
\left(
r_1\left(\mu_m\right),
\ldots,
r_k\left(\mu_m\right)
\right)
\rightarrow
\left(
r_1,
\ldots,
r_k
\right).
\]

By the Kostlan representation
\citep[Theorem~4.7.3 and Section~7.2]{HoughKrishnapurPeresVirag2009},
the unordered squared moduli of the points of
\(\operatorname{Gin}_{\infty}\) satisfy
\[
\left\{
\left|Z\right|^2:
Z\in\operatorname{Gin}_{\infty}
\right\}
\overset{\mathrm{d}}{=}
\left\{
\Gamma_j:
j\geq 1
\right\},
\]
where the variables \(\Gamma_j\sim\operatorname{Gamma}\left(j,1\right)\) are independent. Since these distributions are continuous and supported on
\(\left(0,\infty\right)\), the Ginibre process has no point at the origin and no two of its points have the same modulus almost surely. It also has infinitely many points almost surely. The continuity property above therefore holds at
\(\operatorname{Gin}_{\infty}\) with probability one.

The ordered moduli of the atoms of \(\mathcal{Z}_n\) are
\[
\sqrt{n}R_{n,1},
\ldots,
\sqrt{n}R_{n,n}.
\]
Part \textup{(i)} and the continuous mapping theorem consequently imply that, for every fixed \(k\geq 1\),
\[
\left(
\sqrt{n}R_{n,1},
\ldots,
\sqrt{n}R_{n,k}
\right)
\overset{\mathrm{d}}{\rightarrow}
\left(
R_1,\ldots,R_k
\right).
\]
This proves part \textup{(ii)}.

To obtain the largest eigenvalue moduli, consider the ratio with the two matrices interchanged,
\[
\widetilde{M}_n:=B_nA_n^{-1},
\]
and let
\(
0<\widetilde{R}_{n,1}\leq\cdots\leq\widetilde{R}_{n,n}
\)
be the ordered moduli of its eigenvalues. Assumption~\ref{ass:matrix-atoms} is unchanged when \(A_n\) and \(B_n\) are interchanged, even when their atom laws differ. Hence the result just proved applies to \(\widetilde{M}_n\):
\[
\left(
\sqrt{n}\widetilde{R}_{n,1},
\ldots,
\sqrt{n}\widetilde{R}_{n,k}
\right)
\overset{\mathrm{d}}{\rightarrow}
\left(
R_1,\ldots,R_k
\right).
\]
By the spectral inversion relation \eqref{eq:spectral-inversion},
\[
\widetilde{R}_{n,j}
=
R_{n,n-j+1}^{-1},
\qquad
1\leq j\leq n.
\]
It follows that
\[
\left(
\frac{\sqrt{n}}{R_{n,n}},
\ldots,
\frac{\sqrt{n}}{R_{n,n-k+1}}
\right)
\overset{\mathrm{d}}{\rightarrow}
\left(
R_1,\ldots,R_k
\right).
\]
Since \(R_j>0\) almost surely for every fixed \(j\), coordinatewise inversion and the continuous mapping theorem yield
\[
\left(
\frac{R_{n,n}}{\sqrt{n}},
\ldots,
\frac{R_{n,n-k+1}}{\sqrt{n}}
\right)
\overset{\mathrm{d}}{\rightarrow}
\left(
R_1^{-1},
\ldots,
R_k^{-1}
\right),
\]
which proves part \textup{(iii)}.

Finally,
\[
R_{n,1}
=
\rho_{\min}\left(M_n\right),
\qquad
R_{n,n}
=
\rho_{\max}\left(M_n\right).
\]
Taking \(k=1\) in the preceding limits gives
\[
\sqrt{n}\rho_{\min}\left(M_n\right)
\overset{\mathrm{d}}{\rightarrow}
R_1,
\qquad
\frac{\rho_{\max}\left(M_n\right)}{\sqrt{n}}
\overset{\mathrm{d}}{\rightarrow}
R_1^{-1}.
\]
The Kostlan representation further identifies
\[
R_1
\overset{\mathrm{d}}{=}
\sqrt{\min_{j\geq 1}\Gamma_j}.
\]
Therefore,
\[
\sqrt{n}\rho_{\min}\left(M_n\right)
\overset{\mathrm{d}}{\rightarrow}
\sqrt{\min_{j\geq 1}\Gamma_j},
\qquad
\frac{\rho_{\max}\left(M_n\right)}{\sqrt{n}}
\overset{\mathrm{d}}{\rightarrow}
\frac{1}{\sqrt{\min_{j\geq 1}\Gamma_j}},
\]
which proves part \textup{(iv)}.

Let \(D\subset\mathbb{C}\) be bounded and Borel, with
\(\operatorname{Leb}_2\left(\partial D\right)=0\). Since the intensity of
\(\operatorname{Gin}_{\infty}\) is \(1/\pi\), we have
\[
\E\left[
\operatorname{Gin}_{\infty}\left(\partial D\right)
\right]
=
\frac{1}{\pi}
\operatorname{Leb}_2\left(\partial D\right)
=
0.
\]
Consequently,
\(
\operatorname{Gin}_{\infty}\left(\partial D\right)=0
\text{ almost surely}.
\)
For locally finite measures, the map
\(
\mu\longmapsto\mu\left(D\right)
\)
is continuous under vague convergence at every \(\mu\) satisfying
\(\mu\left(\partial D\right)=0\). Applying the continuous mapping theorem to part
\textup{(i)} therefore gives
\[
\mathcal{Z}_n\left(D\right)
\overset{\mathrm{d}}{\rightarrow}
\operatorname{Gin}_{\infty}\left(D\right),
\]
which proves part \textup{(v)}.

We now specialize to the centered disc
\[
D_a
=
\left\{
z\in\mathbb{C}:
\left|z\right|\leq a
\right\},
\qquad
a>0,
\]
and write
\[
N_{\infty}\left(a\right)
:=
\operatorname{Gin}_{\infty}\left(D_a\right).
\]
By the Kostlan representation, the unordered squared moduli of the points of
\(\operatorname{Gin}_{\infty}\) have the same law as the independent variables
\(\left(\Gamma_j\right)_{j\geq 1}\). Hence
\[
N_{\infty}\left(a\right)
\overset{\mathrm{d}}{=}
\sum_{j=1}^{\infty}I_j\left(a\right),
\qquad
I_j\left(a\right)
:=
\mathbf{1}_{\left\{\Gamma_j\leq a^2\right\}}.
\]
The variables \(I_j\left(a\right)\) are independent Bernoulli random variables with
success probabilities
\[
\begin{aligned}
p_j\left(a\right)
:=
\Pr\left(\Gamma_j\leq a^2\right) 
=
\int_0^{a^2}
\frac{x^{j-1}e^{-x}}{\left(j-1\right)!}
\,\mathrm{d}x 
=
1-e^{-a^2}
\sum_{\ell=0}^{j-1}
\frac{a^{2\ell}}{\ell!}.
\end{aligned}
\]
Tonelli's theorem and the exponential series give
\[
\begin{aligned}
\sum_{j=1}^{\infty}p_j\left(a\right)
=
\int_0^{a^2}
e^{-x}
\sum_{j=1}^{\infty}
\frac{x^{j-1}}{\left(j-1\right)!}
\,\mathrm{d}x 
=
\int_0^{a^2}1\,\mathrm{d}x
=
a^2.
\end{aligned}
\]
In particular, the Bernoulli sum is finite almost surely.

For \(0<t\leq 1\), independence gives
\[
\E\left[
t^{\sum_{j=1}^{m}I_j\left(a\right)}
\right]
=
\prod_{j=1}^{m}
\left(
1-p_j\left(a\right)+tp_j\left(a\right)
\right).
\]
Letting \(m\to\infty\) and using dominated convergence yields the probability generating function
\[
\E\left[
t^{N_{\infty}\left(a\right)}
\right]
=
\prod_{j=1}^{\infty}
\left(
1-p_j\left(a\right)+tp_j\left(a\right)
\right).
\]
The infinite product converges because
\(\sum_{j\geq 1}p_j\left(a\right)=a^2<\infty\).

The same summability permits termwise calculation of the first two moments. Thus,
\[
\E\left[
N_{\infty}\left(a\right)
\right]
=
\sum_{j=1}^{\infty}p_j\left(a\right)
=
a^2
\]
and
\[
\Var\left(
N_{\infty}\left(a\right)
\right)
=
\sum_{j=1}^{\infty}
p_j\left(a\right)
\left(
1-p_j\left(a\right)
\right).
\]
Indeed, the partial Bernoulli sums converge to
\(N_{\infty}\left(a\right)\) in \(L^2\), since both
\[
\sum_{j>m}p_j\left(a\right)
\quad\text{and}\quad
\sum_{j>m}p_j\left(a\right)
\left(
1-p_j\left(a\right)
\right)
\]
tend to zero.

Finally, independence gives the gap probability
\[
\begin{aligned}
\Pr\left(
N_{\infty}\left(a\right)=0
\right)
=
\prod_{j=1}^{\infty}
\left(
1-p_j\left(a\right)
\right) 
=
\prod_{j=1}^{\infty}
\Pr\left(
\Gamma_j>a^2
\right) 
=
\prod_{j=1}^{\infty}
\left(
e^{-a^2}
\sum_{\ell=0}^{j-1}
\frac{a^{2\ell}}{\ell!}
\right).
\end{aligned}
\]
Therefore,
\[
\mathcal{Z}_n\left(D_a\right)
\overset{\mathrm{d}}{\rightarrow}
\sum_{j=1}^{\infty}I_j\left(a\right),
\]
with the generating function, mean, variance, and gap probability displayed above. 
\end{proof}

\begin{remark}
Part~\textup{(iv)} of Theorem~\ref{thm:microscopic-spectrum} gives an affirmative answer, for atom laws fixed as \(n\) varies, to the question posed by
\citet[Section~1.2.4]{ChafaiGarciaZeladaXu2026}. Their
Theorem~1.3 establishes the inner- and outer-spectral-radius limits under
conditions \textup{(C1)}--\textup{(C3)}, including the third- and fourth-moment
comparison in \textup{(C2)}. Assumption~\ref{ass:matrix-atoms} retains the
bounded-density and finite-moment requirements corresponding to
\textup{(C1)} and \textup{(C3)}, but replaces \textup{(C2)} by the exact
circular second-moment identities in
\eqref{eq:circular-second-moments}. The conclusion obtained here is stronger
at the microscopic origin: the two spectral-radius limits are the \(k=1\)
cases of the ordered-modulus consequences of the point-process limit. The
possible extension to atom laws depending on \(n\) is discussed in
Appendix~\ref{app:triangular-arrays}.
\end{remark}

\subsection{Multivariate extension}
\label{subsec:several-directions}

The scalar field corresponds to one perturbation direction. We now extend
the construction to a fixed finite-dimensional parameter space and show
that its law is determined by the covariance geometry of the perturbation
vectors.

Fix \(r\geq 1\). For each \(n\), let
\(
B_n^{\left(1\right)},\ldots,B_n^{\left(r\right)}
\in\mathbb{C}^{n\times n}
\)
be independent of \(A_n\). Assume that the random vectors
\[
\boldsymbol{\Xi}_{n,ij}
:=
\left(
\left(B_n^{\left(1\right)}\right)_{ij},
\ldots,
\left(B_n^{\left(r\right)}\right)_{ij}
\right)^\top,
\qquad
1\leq i,j\leq n,
\]
are independent and identically distributed copies of a random vector
\(\boldsymbol{\Xi}\in\mathbb{C}^r\) whose law does not depend on \(n\). Suppose that
\[
\E\left[\boldsymbol{\Xi}\right]=\boldsymbol{0},
\qquad
\E\left[
\boldsymbol{\Xi}\boldsymbol{\Xi}^*
\right]
=
\boldsymbol{\Sigma},
\qquad
\E\left[
\boldsymbol{\Xi}\boldsymbol{\Xi}^\top
\right]
=
\boldsymbol{0},
\]
where \(\boldsymbol{\Sigma}\in\mathbb{C}^{r\times r}\) is Hermitian and positive definite and that
\(
\E\left[
\left\|\boldsymbol{\Xi}\right\|_2^p
\right]
<
\infty
\text{ for every }p\geq 1.
\)

For \(\boldsymbol{z}=\left(z_1,\ldots,z_r\right)^\top\in\mathbb{C}^r\), set
\[
B_n\left(\boldsymbol{z}\right)
:=
\sum_{\ell=1}^r
z_\ell B_n^{\left(\ell\right)}
\]
and define
\begin{equation}
Q_n\left(\boldsymbol{z}\right)
:=
\frac{
\det\left(
A_n-n^{-1/2}B_n\left(\boldsymbol{z}\right)
\right)
}{
\det\left(A_n\right)
}.
\label{eq:multivariate-determinant-field}
\end{equation}

Let
\[
\boldsymbol{G}_{ab}
=
\left(
G_{ab}^{\left(1\right)},
\ldots,
G_{ab}^{\left(r\right)}
\right)^\top,
\qquad
a,b\geq 1,
\]
be independent centered complex Gaussian vectors satisfying
\[
\E\left[
\boldsymbol{G}_{ab}\boldsymbol{G}_{cd}^*
\right]
=
\delta_{ac}\delta_{bd}\boldsymbol{\Sigma},
\qquad
\E\left[
\boldsymbol{G}_{ab}\boldsymbol{G}_{cd}^\top
\right]
=
\boldsymbol{0},
\]
and assume that this array is independent of
\(\left(D_j\right)_{j\geq 1}\). For \(1\leq\ell\leq r\), let
\[
G_K^{\left(\ell\right)}
:=
\left(
G_{ab}^{\left(\ell\right)}
\right)_{1\leq a,b\leq K},
\]
and define
\begin{equation}
Q_{\boldsymbol{\Sigma}}^{[K]}
\left(\boldsymbol{z}\right)
:=
\det\left(
I_K
-
\operatorname{diag}\left(D_1,\ldots,D_K\right)
\sum_{\ell=1}^r z_\ell G_K^{\left(\ell\right)}
\right).
\label{eq:multivariate-limiting-truncation}
\end{equation}

\begin{theorem}[Multivariate hard-edge determinant field]
\label{thm:several-directions}
Under the assumptions above, there exists a random element
\[
Q_{\boldsymbol{\Sigma}}
\in
\operatorname{Hol}\left(\mathbb{C}^r\right)
\]
such that
\begin{equation}
Q_{\boldsymbol{\Sigma}}^{[K]}
\overset{\mathbb{P}}{\rightarrow}
Q_{\boldsymbol{\Sigma}}
\quad
\text{in }
\operatorname{Hol}\left(\mathbb{C}^r\right),
\qquad
Q_n
\overset{\mathrm{d}}{\rightarrow}
Q_{\boldsymbol{\Sigma}}
\quad
\text{in }
\operatorname{Hol}\left(\mathbb{C}^r\right).
\label{eq:multivariate-field-convergence}
\end{equation}
Moreover,
\(
Q_{\boldsymbol{\Sigma}}\left(\boldsymbol{0}\right)=1
\text{ almost surely}.
\)

For every nonzero
\(\boldsymbol{v}\in\mathbb{C}^r\), define
\[
\sigma_{\boldsymbol{v}}^2
:=
\boldsymbol{v}^\top
\boldsymbol{\Sigma}
\overline{\boldsymbol{v}}.
\]
Then \(\sigma_{\boldsymbol{v}}>0\), and
\begin{equation}
Q_{\boldsymbol{\Sigma}}
\left(\,\cdot\,\boldsymbol{v}\right)
\overset{\mathrm{d}}{=}
Q\left(
\sigma_{\boldsymbol{v}}\,\cdot
\right)
\qquad
\text{in }
\operatorname{Hol}\left(\mathbb{C}\right),
\label{eq:line-restriction-law}
\end{equation}
where \(Q\) is the scalar limiting entire function from
Theorem~\ref{thm:determinant-field}. Consequently,
\[
\mathcal{Z}\left(
Q_n\left(\,\cdot\,\boldsymbol{v}\right)
\right)
\overset{\mathrm{d}}{\rightarrow}
\sum_{Z\in\operatorname{Gin}_{\infty}}
\delta_{Z/\sigma_{\boldsymbol{v}}}
\qquad
\text{in }
\mathcal{N}\left(\mathbb{C}\right).
\]
\end{theorem}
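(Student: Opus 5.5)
The plan is to repeat the proof of Theorem~\ref{thm:determinant-field} with the scalar perturbation replaced by the matrix-valued linear form \(B_n(\boldsymbol{z})\). Keeping the singular-value decomposition \(A_n=U_n\Sigma_nV_n^*\) and \(D_n\) as before, set \(C_n^{(\ell)}:=U_n^*B_n^{(\ell)}V_n\) and \(C_n(\boldsymbol{z}):=\sum_\ell z_\ell C_n^{(\ell)}\). The same computation as in \eqref{eq:singular-value-reduction} then gives
\[
Q_n(\boldsymbol{z})=\det\left(I_n-D_nC_n(\boldsymbol{z})\right).
\]
For fixed \(\boldsymbol{z}\), the entries of \(\sum_\ell z_\ell B_n^{(\ell)}\) are i.i.d. copies of \(\boldsymbol{z}^\top\boldsymbol{\Xi}\). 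This variable is centered, has pseudovariance \(\boldsymbol{z}^\top\E[\boldsymbol{\Xi}\boldsymbol{\Xi}^\top]\boldsymbol{z}=0\), and has variance \(\boldsymbol{z}^\top\boldsymbol{\Sigma}\overline{\boldsymbol{z}}=\sigma_{\boldsymbol{z}}^2\). The minor-orthogonality computation in \eqref{eq:principal-minor-orthogonality} used only independence, mean zero and the second absolute moment. It therefore goes through verbatim, with the factor \(k!\) replaced by \(k!\,\sigma_{\boldsymbol{z}}^{2k}\). Summing over subsets yields
\[
\E\left[\left|Q_n(\boldsymbol{z})-Q_n^{[K]}(\boldsymbol{z})\right|^2\,\middle|\,A_n\right]
=\sum_{S\nsubseteq[K]}|S|!\,\sigma_{\boldsymbol{z}}^{2|S|}\prod_{j\in S}D_{n,j}^2,
\]
which is \eqref{eq:conditional-determinant-tail} with \(|w|^2\) replaced by \(\sigma_{\boldsymbol{z}}^2\leq\|\boldsymbol{\Sigma}\|_{\mathrm{op}}\|\boldsymbol{z}\|^2\).

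For the compact-uniform bound I would use the Cauchy estimate on a distinguished polydisc torus, integrating over \(|z_\ell|=\rho\) for each \(\ell\). This costs the constant \((\rho/(\rho-R))^{2r}\) and \(t:=\|\boldsymbol{\Sigma}\|_{\mathrm{op}}r\rho^2\), and otherwise the elementary-symmetric bound \eqref{eq:compact-determinant-tail-bound} is reproduced unchanged. The random inputs are unchanged: \(\boldsymbol{D}_n\to\boldsymbol{D}\) in \(\ell^2\) and the tail estimate \eqref{eq:inverse-hard-edge-tail} concern only \(A_n\). This gives the analogue of \eqref{eq:finite-n-compact-truncation}. The same argument applied to \(Q_{\boldsymbol{\Sigma}}^{[K]}\) gives the analogue of \eqref{eq:limiting-compact-truncation}, and the Borel--Cantelli construction produces \(Q_{\boldsymbol{\Sigma}}\in\operatorname{Hol}(\mathbb{C}^r)\) with \(Q_{\boldsymbol{\Sigma}}(\boldsymbol{0})=1\).

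The step needing the most care is the joint limit
\[
\left(\boldsymbol{D}_n,\left((C_n^{(\ell)})_{[K],[K]}\right)_{\ell\leq r}\right)\overset{\mathrm{d}}{\rightarrow}\left(\boldsymbol{D},(G_K^{(\ell)})_{\ell}\right).
\]
Delocalization \eqref{eq:fixed-singular-vector-delocalization} depends only on \(A_n\) and carries over. I would apply the conditional Lyapunov CLT to a real functional
\[
\Re\sum_{\ell,a,b}t^{(\ell)}_{ab}(C_n^{(\ell)})_{ab}=\sum_{i,j}\Re\left(\sum_\ell W^{(\ell)}_{n,ij}\boldsymbol{\Xi}_{n,ij,\ell}\right).
\]
The conditional covariance is computed exactly from \(\boldsymbol{\Sigma}\) and the vanishing pseudocovariance via unitarity of \(U_n\) and \(V_n\), and it matches that of the Gaussian array \(\boldsymbol{G}_{ab}\). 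The Lyapunov term is controlled by \(\E\|\boldsymbol{\Xi}\|^3\), the maximum of \(|W|\), and the sum of \(|W|^2\). Continuous mapping then gives \(Q_n^{[K]}\to Q_{\boldsymbol{\Sigma}}^{[K]}\) in \(\operatorname{Hol}(\mathbb{C}^r)\), and the converging-together argument finishes \eqref{eq:multivariate-field-convergence}.

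For the line restriction, \(\sigma_{\boldsymbol{v}}>0\) because \(\boldsymbol{\Sigma}\) is positive definite and \(\overline{\boldsymbol{v}}\neq0\). For fixed \(\boldsymbol{v}\), the array \(\sigma_{\boldsymbol{v}}^{-1}\sum_\ell v_\ell G^{(\ell)}_{ab}\) consists of i.i.d. standard circular complex Gaussians independent of \(\boldsymbol{D}\). Hence
\[
Q_{\boldsymbol{\Sigma}}^{[K]}(w\boldsymbol{v})=\det\left(I_K-\sigma_{\boldsymbol{v}}w\operatorname{diag}(D_1,\ldots,D_K)\widetilde G_K\right)
\]
has the law of \(Q^{[K]}(\sigma_{\boldsymbol{v}}w)\) jointly in \(w\). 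Taking \(K\to\infty\) in probability gives \eqref{eq:line-restriction-law}. Restriction \(f\mapsto f(\cdot\boldsymbol{v})\) is continuous \(\operatorname{Hol}(\mathbb{C}^r)\to\operatorname{Hol}(\mathbb{C})\), so \(Q_n(\cdot\boldsymbol{v})\overset{\mathrm{d}}{\rightarrow}Q(\sigma_{\boldsymbol{v}}\cdot)\). The zeros of \(Q(\sigma_{\boldsymbol{v}}\cdot)\) are \(\mathcal{Z}(Q)\) scaled by \(\sigma_{\boldsymbol{v}}^{-1}\), and the continuity of zero divisors at nonzero functions together with \(\mathcal{Z}(Q)\overset{\mathrm{d}}{=}\operatorname{Gin}_\infty\) from the proof of Theorem~\ref{thm:microscopic-spectrum} gives the final claim.
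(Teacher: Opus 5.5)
Your proposal is correct and follows the paper's proof essentially step for step: the reduction to \(\det(I_n-D_nC_n(\boldsymbol{z}))\), principal-minor orthogonality with \(k!\) replaced by \(k!\,(\boldsymbol{z}^\top\boldsymbol{\Sigma}\overline{\boldsymbol{z}})^k\), the polydisc Cauchy estimate with constant \((\rho/(\rho-R))^{2r}\) and \(t=r\rho^2\|\boldsymbol{\Sigma}\|_{\mathrm{op}}\), the conditional Lyapunov CLT for the rotated blocks using the scalar delocalization estimate, and the identity \(\sum_\ell v_\ell G_K^{(\ell)}\overset{\mathrm{d}}{=}\sigma_{\boldsymbol{v}}G_K\), independent of \(\boldsymbol{D}\), for the line restriction. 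The only cosmetic difference is that the paper states the minor identity for two parameters \(\boldsymbol{z},\boldsymbol{w}\), whereas you use only the diagonal case \(\boldsymbol{w}=\boldsymbol{z}\), which is all the argument needs.
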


\begin{proof}
Use the ordered singular-value decomposition
\[
A_n
=
U_n\Sigma_nV_n^*
\]
chosen in the proof of Theorem~\ref{thm:determinant-field}, and retain the notation
\(
D_n
=
\operatorname{diag}\left(
D_{n,1},\ldots,D_{n,n}
\right).
\)

For \(1\leq\ell\leq r\), set
\[
C_n^{\left(\ell\right)}
:=
U_n^*B_n^{\left(\ell\right)}V_n,
\qquad
C_n\left(\boldsymbol{z}\right)
:=
\sum_{\ell=1}^r
z_\ell C_n^{\left(\ell\right)}.
\]
The same determinant reduction used in
\eqref{eq:singular-value-reduction} gives
\[
Q_n\left(\boldsymbol{z}\right)
=
\det\left(
I_n-D_nC_n\left(\boldsymbol{z}\right)
\right).
\]

For \(1\leq a,b\leq K\), define
\[
\boldsymbol{C}_{n,ab}
:=
\left(
\left(C_n^{\left(1\right)}\right)_{ab},
\ldots,
\left(C_n^{\left(r\right)}\right)_{ab}
\right)^\top.
\]
Conditional on \(A_n\), the matrices \(U_n\) and \(V_n\) are deterministic, and
\[
\boldsymbol{C}_{n,ab}
=
\sum_{i,j=1}^n
\overline{\left(U_n\right)_{ia}}
\left(V_n\right)_{jb}
\boldsymbol{\Xi}_{n,ij}.
\]
It follows that
\(
\E\left[
\boldsymbol{C}_{n,ab}
\,\middle|\,
A_n
\right]
=
\boldsymbol{0}.
\)

For \(1\leq a,b,c,d\leq K\), the conditional covariance is
\[
\begin{aligned}
\E\left[
\boldsymbol{C}_{n,ab}
\boldsymbol{C}_{n,cd}^*
\,\middle|\,
A_n
\right]
=
\sum_{i,j=1}^n
\overline{\left(U_n\right)_{ia}}
\left(U_n\right)_{ic}
\left(V_n\right)_{jb}
\overline{\left(V_n\right)_{jd}}
\boldsymbol{\Sigma} 
=
\delta_{ac}\delta_{bd}\boldsymbol{\Sigma}.
\end{aligned}
\]
Similarly,
\[
\begin{aligned}
\E\left[
\boldsymbol{C}_{n,ab}
\boldsymbol{C}_{n,cd}^\top
\,\middle|\,
A_n
\right]
&=
\left(
\sum_{i=1}^n
\overline{\left(U_n\right)_{ia}}
\overline{\left(U_n\right)_{ic}}
\right)
\left(
\sum_{j=1}^n
\left(V_n\right)_{jb}
\left(V_n\right)_{jd}
\right)
\E\left[
\boldsymbol{\Xi}\boldsymbol{\Xi}^\top
\right] 
=
\boldsymbol{0}.
\end{aligned}
\]

The singular-vector delocalization established in the finite-rotated-block part of the proof of
Theorem~\ref{thm:determinant-field} gives, for every fixed \(K\),
\[
\max_{\substack{1\leq i\leq n\\1\leq a\leq K}}
\left|\left(U_n\right)_{ia}\right|
+
\max_{\substack{1\leq j\leq n\\1\leq b\leq K}}
\left|\left(V_n\right)_{jb}\right|
\overset{\mathbb{P}}{\rightarrow}
0.
\]
We use this estimate to verify the conditional Lindeberg condition jointly over all perturbation directions.

Let
\[
T^{\left(\ell\right)}
=
\left(
t_{ab}^{\left(\ell\right)}
\right)_{a,b=1}^K
\in\mathbb{C}^{K\times K},
\qquad
1\leq\ell\leq r,
\]
be deterministic, and consider
\[
L_n
:=
\Re
\sum_{\ell=1}^r
\sum_{a,b=1}^K
\overline{
t_{ab}^{\left(\ell\right)}
}
\left(C_n^{\left(\ell\right)}\right)_{ab}.
\]
Then
\(
L_n
=
\sum_{i,j=1}^n
\Re\left(
\boldsymbol{\beta}_{n,ij}^\top
\boldsymbol{\Xi}_{n,ij}
\right),
\)
where the \(\ell\)-th coordinate of
\(\boldsymbol{\beta}_{n,ij}\in\mathbb{C}^r\) is
\[
\left(
\boldsymbol{\beta}_{n,ij}
\right)_\ell
=
\sum_{a,b=1}^K
\overline{
t_{ab}^{\left(\ell\right)}
}
\overline{\left(U_n\right)_{ia}}
\left(V_n\right)_{jb}.
\]
Unitarity gives
\(
\sum_{i,j=1}^n
\left\|
\boldsymbol{\beta}_{n,ij}
\right\|_2^2
=
\sum_{\ell=1}^r
\left\|
T^{\left(\ell\right)}
\right\|_{\mathrm{HS}}^2.
\)

Moreover,
\[
\begin{aligned}
\left\|
\boldsymbol{\beta}_{n,ij}
\right\|_2
&\leq
\left(
\sum_{\ell=1}^r
\left\|
T^{\left(\ell\right)}
\right\|_{\mathrm{op}}^2
\right)^{1/2}
\left(
\sum_{a=1}^K
\left|\left(U_n\right)_{ia}\right|^2
\right)^{1/2}
\left(
\sum_{b=1}^K
\left|\left(V_n\right)_{jb}\right|^2
\right)^{1/2}.
\end{aligned}
\]
Hence
\(
\max_{1\leq i,j\leq n}
\left\|
\boldsymbol{\beta}_{n,ij}
\right\|_2
\overset{\mathbb{P}}{\rightarrow}
0.
\)

For every \(\varepsilon>0\),
\[
\begin{aligned}
\sum_{i,j=1}^n
\E\left[
\left|
\Re\left(
\boldsymbol{\beta}_{n,ij}^\top
\boldsymbol{\Xi}_{n,ij}
\right)
\right|^2
\mathbf{1}_{\left\{
\left|
\Re\left(
\boldsymbol{\beta}_{n,ij}^\top
\boldsymbol{\Xi}_{n,ij}
\right)
\right|
>
\varepsilon
\right\}}
\,\middle|\,
A_n
\right] \leq
\frac{
\E\left[
\left\|\boldsymbol{\Xi}\right\|_2^3
\right]
}{\varepsilon}
\max_{1\leq i,j\leq n}
\left\|
\boldsymbol{\beta}_{n,ij}
\right\|_2
\sum_{i,j=1}^n
\left\|
\boldsymbol{\beta}_{n,ij}
\right\|_2^2
\overset{\mathbb{P}}{\rightarrow}
0.
\end{aligned}
\]
The conditional variance of \(L_n\) is
\[
\Var\left(
L_n
\,\middle|\,
A_n
\right)
=
\frac{1}{2}
\sum_{a,b=1}^K
\sum_{\ell,m=1}^r
\overline{
t_{ab}^{\left(\ell\right)}
}
\boldsymbol{\Sigma}_{\ell m}
t_{ab}^{\left(m\right)}.
\]
This is the variance of the corresponding real linear functional of
\(
\left(
G_K^{\left(1\right)},
\ldots,
G_K^{\left(r\right)}
\right).
\)
The conditional Lindeberg--Feller theorem and the Cramér--Wold device therefore imply
\[
d_{\mathrm{BL}}\left(
\mathcal{L}\left(
\left(
\left(C_n^{\left(\ell\right)}\right)_{\left[K\right],\left[K\right]}
\right)_{\ell=1}^r
\,\middle|\,
A_n
\right),
\mathcal{L}\left(
\left(
G_K^{\left(\ell\right)}
\right)_{\ell=1}^r
\right)
\right)
\overset{\mathbb{P}}{\rightarrow}
0.
\]

The vector
\(
\boldsymbol{D}_{n,K}
:=
\left(
D_{n,1},\ldots,D_{n,K}
\right)^\top
\)
is measurable with respect to \(A_n\), and the scalar hard-edge argument gives
\[
\boldsymbol{D}_{n,K}
\overset{\mathrm{d}}{\rightarrow}
\boldsymbol{D}_K
:=
\left(
D_1,\ldots,D_K
\right)^\top.
\]
The same conditioning argument used in the proof of
\eqref{eq:finite-rotated-block-limit} consequently yields
\begin{equation}
\left(
\boldsymbol{D}_{n,K},
\left(
\left(C_n^{\left(\ell\right)}\right)_{\left[K\right],\left[K\right]}
\right)_{\ell=1}^r
\right)
\overset{\mathrm{d}}{\rightarrow}
\left(
\boldsymbol{D}_K,
\left(
G_K^{\left(\ell\right)}
\right)_{\ell=1}^r
\right),
\label{eq:multivariate-finite-block-limit}
\end{equation}
where the Gaussian blocks on the right-hand side are jointly independent of
\(\boldsymbol{D}_K\).

We also record the principal-minor identity needed below. For
\(\boldsymbol{z},\boldsymbol{w}\in\mathbb{C}^r\),
\[
\E\left[
\left(B_n\left(\boldsymbol{z}\right)\right)_{ij}
\overline{
\left(B_n\left(\boldsymbol{w}\right)\right)_{kq}
}
\right]
=
\delta_{ik}\delta_{jq}
\boldsymbol{z}^\top
\boldsymbol{\Sigma}
\overline{\boldsymbol{w}}.
\]
Repeating the double Cauchy--Binet and entrywise permutation calculation used to prove
\eqref{eq:principal-minor-orthogonality} gives, for all
\(S,T\subseteq\left[n\right]\),
\begin{equation}
\begin{aligned}
&\E\left[
\det\left(
\left(C_n\left(\boldsymbol{z}\right)\right)_{S,S}
\right)
\overline{
\det\left(
\left(C_n\left(\boldsymbol{w}\right)\right)_{T,T}
\right)
}
\,\middle|\,
A_n
\right] =
\left|S\right|!
\left(
\boldsymbol{z}^\top
\boldsymbol{\Sigma}
\overline{\boldsymbol{w}}
\right)^{\left|S\right|}
\mathbf{1}_{\left\{S=T\right\}}.
\end{aligned}
\label{eq:multivariate-principal-minor-orthogonality}
\end{equation}

For \(1\leq K\leq n\), define
\[
Q_n^{[K]}\left(\boldsymbol{z}\right)
:=
\det\left(
I_K
-
\operatorname{diag}\left(D_{n,1},\ldots,D_{n,K}\right)
\sum_{\ell=1}^r
z_\ell
\left(C_n^{\left(\ell\right)}\right)_{\left[K\right],\left[K\right]}
\right).
\]
Equivalently, \(Q_n^{[K]}\) is obtained from \(Q_n\) by replacing
\(D_{n,j}\) with zero for \(j>K\).

Expanding both determinants in principal minors and applying
\eqref{eq:multivariate-principal-minor-orthogonality} with
\(\boldsymbol{w}=\boldsymbol{z}\) gives
\begin{equation}
\begin{aligned}
&\E\left[
\left|
Q_n\left(\boldsymbol{z}\right)
-
Q_n^{[K]}\left(\boldsymbol{z}\right)
\right|^2
\,\middle|\,
A_n
\right] =
\sum_{\substack{
S\subseteq\left[n\right]
S\nsubseteq\left[K\right]
}}
\left|S\right|!
\left(
\boldsymbol{z}^{\top}
\boldsymbol{\Sigma}
\overline{\boldsymbol{z}}
\right)^{\left|S\right|}
\prod_{j\in S}D_{n,j}^2.
\end{aligned}
\label{eq:multivariate-conditional-determinant-tail}
\end{equation}
The covariance form is nonnegative because
\[
\boldsymbol{z}^{\top}
\boldsymbol{\Sigma}
\overline{\boldsymbol{z}}
=
\left\langle
\overline{\boldsymbol{z}}|
\boldsymbol{\Sigma}
\overline{\boldsymbol{z}}
\right\rangle.
\]

Fix \(0<R<\rho\). For any
\(f\in\operatorname{Hol}\left(\mathbb{C}^r\right)\), the iterated Cauchy formula gives
\[
\begin{aligned}
f\left(\boldsymbol{z}\right)
&=
\frac{1}{\left(2\pi\right)^r}
\int_{\left[0,2\pi\right]^r}
f\left(
\rho e^{\mathrm{i}\theta_1},
\ldots,
\rho e^{\mathrm{i}\theta_r}
\right)
\prod_{\ell=1}^r
\frac{
\rho e^{\mathrm{i}\theta_\ell}
}{
\rho e^{\mathrm{i}\theta_\ell}-z_\ell
}
\prod_{\ell=1}^r
\mathrm{d}\theta_\ell .
\end{aligned}
\]
If \(\left\|\boldsymbol{z}\right\|_\infty\leq R\), then each factor in the product is bounded in modulus by
\(\rho/\left(\rho-R\right)\). The Cauchy--Schwarz inequality therefore yields
\[
\begin{aligned}
\sup_{\left\|\boldsymbol{z}\right\|_\infty\leq R}
\left|
f\left(\boldsymbol{z}\right)
\right|^2
&\leq
\left(
\frac{\rho}{\rho-R}
\right)^{2r}
\frac{1}{\left(2\pi\right)^r}
\int_{\left[0,2\pi\right]^r}
\left|
f\left(
\rho e^{\mathrm{i}\theta_1},
\ldots,
\rho e^{\mathrm{i}\theta_r}
\right)
\right|^2
\prod_{\ell=1}^r
\mathrm{d}\theta_\ell .
\end{aligned}
\]

On the distinguished boundary of the polydisc,
\[
\left\|
\left(
\rho e^{\mathrm{i}\theta_1},
\ldots,
\rho e^{\mathrm{i}\theta_r}
\right)
\right\|_2^2
=
r\rho^2.
\]
Hence
\[
\boldsymbol{z}^{\top}
\boldsymbol{\Sigma}
\overline{\boldsymbol{z}}
\leq
\left\|
\boldsymbol{\Sigma}
\right\|_{\mathrm{op}}
\left\|
\boldsymbol{z}
\right\|_2^2
\leq
r\rho^2
\left\|
\boldsymbol{\Sigma}
\right\|_{\mathrm{op}}.
\]
Set
\[
t
:=
r\rho^2
\left\|
\boldsymbol{\Sigma}
\right\|_{\mathrm{op}}.
\]
Applying the preceding polydisc estimate to
\(Q_n-Q_n^{[K]}\), and then using
\eqref{eq:multivariate-conditional-determinant-tail}, gives
\[
\begin{aligned}
&\E\left[
\sup_{\left\|\boldsymbol{z}\right\|_\infty\leq R}
\left|
Q_n\left(\boldsymbol{z}\right)
-
Q_n^{[K]}\left(\boldsymbol{z}\right)
\right|^2
\,\middle|\,
A_n
\right]
\leq
\left(
\frac{\rho}{\rho-R}
\right)^{2r}
\sum_{\substack{
S\subseteq\left[n\right]\\
S\nsubseteq\left[K\right]
}}
\left|S\right|!
t^{\left|S\right|}
\prod_{j\in S}D_{n,j}^2.
\end{aligned}
\]

For \(J\leq K\), write
\[
\Delta_{n,J}
:=
\sum_{j>J}D_{n,j}^2,
\qquad
e_a^{\left(n,J\right)}
:=
e_a\left(
D_{n,1}^2,\ldots,D_{n,J}^2
\right).
\]
The remaining subset calculation is identical to the scalar argument leading to
\eqref{eq:compact-determinant-tail-bound}. Indeed, split
\(S=A\mathbin{\dot{\cup}}T\), where
\(A\subseteq\left[J\right]\) and
\(T\subseteq\left\{J+1,\ldots,n\right\}\), and distinguish one index of \(T\) lying above \(K\). If
\(t\Delta_{n,J}<1\), this gives
\begin{equation}
\begin{aligned}
\E\left[
\sup_{\left\|\boldsymbol{z}\right\|_\infty\leq R}
\left|
Q_n\left(\boldsymbol{z}\right)
-
Q_n^{[K]}\left(\boldsymbol{z}\right)
\right|^2
\,\middle|\,
A_n
\right] 
\leq
\left(
\frac{\rho}{\rho-R}
\right)^{2r}
t\Delta_{n,K}
\sum_{a=0}^J
\left(a+1\right)!
e_a^{\left(n,J\right)}
t^a
\left(
1-t\Delta_{n,J}
\right)^{-a-2}.
\end{aligned}
\label{eq:multivariate-compact-tail-bound}
\end{equation}

Fix \(M<\infty\) and consider the event
\[
\left\{
t\Delta_{n,J}\leq\frac{1}{2},
\qquad
\max_{1\leq j\leq J}D_{n,j}\leq M
\right\}.
\]
On this event,
\[
e_a^{\left(n,J\right)}
\leq
\binom{J}{a}M^{2a}
\]
and
\[
\left(
1-t\Delta_{n,J}
\right)^{-a-2}
\leq
2^{a+2}.
\]
Thus, for fixed \(J\), \(M\), and \(t\), the sum on the right-hand side of
\eqref{eq:multivariate-compact-tail-bound} is bounded by a deterministic finite constant. Conditional Markov's inequality, the tightness of the first \(J\) coordinates, and
\eqref{eq:inverse-hard-edge-tail} therefore imply that, for every
\(R,\varepsilon>0\),
\begin{equation}
\lim_{K\to\infty}
\limsup_{n\to\infty}
\mathbb{P}\left(
\sup_{\left\|\boldsymbol{z}\right\|_\infty\leq R}
\left|
Q_n\left(\boldsymbol{z}\right)
-
Q_n^{[K]}\left(\boldsymbol{z}\right)
\right|
>
\varepsilon
\right)
=
0.
\label{eq:multivariate-finite-n-truncation}
\end{equation}
No new singular-value estimate is required here: the quantities
\(\Delta_{n,K}\) are the same inverse hard-edge tails used in the scalar proof.

We next construct the limiting field. Recall that
\[
\boldsymbol{D}
=
\left(D_1,D_2,\ldots\right)
\in\ell^2
\qquad
\text{almost surely},
\]
and write
\[
\Delta_K
:=
\sum_{j>K}D_j^2.
\]
For \(L>K\), regard
\(Q_{\boldsymbol{\Sigma}}^{[K]}\) as an \(L\times L\) determinant by setting
\(D_{K+1},\ldots,D_L\) equal to zero. Conditional on
\(\boldsymbol{D}\), the Gaussian principal minors satisfy
\[
\begin{aligned}
&\E\left[
\det\left(
\left(
\sum_{\ell=1}^r
z_\ell G_L^{\left(\ell\right)}
\right)_{S,S}
\right)
\overline{
\det\left(
\left(
\sum_{\ell=1}^r
w_\ell G_L^{\left(\ell\right)}
\right)_{T,T}
\right)
}
\,\middle|\,
\boldsymbol{D}
\right] 
=
\left|S\right|!
\left(
\boldsymbol{z}^{\top}
\boldsymbol{\Sigma}
\overline{\boldsymbol{w}}
\right)^{\left|S\right|}
\mathbf{1}_{\left\{S=T\right\}}.
\end{aligned}
\]
The same polydisc Cauchy estimate and the same subset decomposition therefore yield, whenever
\(t\Delta_J<1\),
\[
\begin{aligned}
&\E\left[
\sup_{\left\|\boldsymbol{z}\right\|_\infty\leq R}
\left|
Q_{\boldsymbol{\Sigma}}^{[L]}
\left(\boldsymbol{z}\right)
-
Q_{\boldsymbol{\Sigma}}^{[K]}
\left(\boldsymbol{z}\right)
\right|^2
\,\middle|\,
\boldsymbol{D}
\right] \leq
\left(
\frac{\rho}{\rho-R}
\right)^{2r}
t\Delta_K
\sum_{a=0}^J
\left(a+1\right)!
e_a\left(
D_1^2,\ldots,D_J^2
\right)
t^a
\left(
1-t\Delta_J
\right)^{-a-2}.
\end{aligned}
\]
Since \(\Delta_J\to 0\) almost surely, the same localization argument gives
\begin{equation}
\lim_{K\to\infty}
\sup_{L\geq K}
\mathbb{P}\left(
\sup_{\left\|\boldsymbol{z}\right\|_\infty\leq R}
\left|
Q_{\boldsymbol{\Sigma}}^{[L]}
\left(\boldsymbol{z}\right)
-
Q_{\boldsymbol{\Sigma}}^{[K]}
\left(\boldsymbol{z}\right)
\right|
>
\varepsilon
\right)
=
0
\label{eq:multivariate-limit-truncation}
\end{equation}
for every \(R,\varepsilon>0\).

The closed polydiscs
\[
\left\{
\boldsymbol{z}\in\mathbb{C}^r:
\left\|\boldsymbol{z}\right\|_\infty\leq m
\right\},
\qquad
m\geq 1,
\]
exhaust \(\mathbb{C}^r\). Repeating the subsequence and Borel--Cantelli construction used after
\eqref{eq:limiting-compact-truncation}, now on these polydiscs, produces a random function
\[
Q_{\boldsymbol{\Sigma}}
\in
\operatorname{Hol}\left(\mathbb{C}^r\right)
\]
such that
\[
Q_{\boldsymbol{\Sigma}}^{[K]}
\overset{\mathbb{P}}{\rightarrow}
Q_{\boldsymbol{\Sigma}}
\text{ in}
\operatorname{Hol}\left(\mathbb{C}^r\right).
\]
Since
\(Q_{\boldsymbol{\Sigma}}^{[K]}\left(\boldsymbol{0}\right)=1\) for every \(K\), compact-uniform convergence gives
\[
Q_{\boldsymbol{\Sigma}}\left(\boldsymbol{0}\right)=1
\text{ almost surely}.
\]

For each fixed \(K\), the joint convergence in
\eqref{eq:multivariate-finite-block-limit} and the continuous mapping theorem yield
\[
Q_n^{[K]}
\overset{\mathrm{d}}{\rightarrow}
Q_{\boldsymbol{\Sigma}}^{[K]}
\text{ in}
\operatorname{Hol}\left(\mathbb{C}^r\right).
\]
Combining this fixed-\(K\) convergence with
\eqref{eq:multivariate-finite-n-truncation} and
\eqref{eq:multivariate-limit-truncation}, the converging-together argument gives
\[
Q_n
\overset{\mathrm{d}}{\rightarrow}
Q_{\boldsymbol{\Sigma}}
\text{ in}
\operatorname{Hol}\left(\mathbb{C}^r\right).
\]

Fix a nonzero vector \(\boldsymbol{v}\in\mathbb{C}^r\). For each \(K\geq 1\), define
\[
H_K\left(\boldsymbol{v}\right)
:=
\sum_{\ell=1}^r
v_\ell G_K^{\left(\ell\right)}.
\]
For \(1\leq a,b,c,d\leq K\), the covariance assumptions on the Gaussian array give
\[
\begin{aligned}
\E\left[
\left(H_K\left(\boldsymbol{v}\right)\right)_{ab}
\overline{
\left(H_K\left(\boldsymbol{v}\right)\right)_{cd}
}
\right]
=
\sum_{\ell,m=1}^r
v_\ell
\overline{v_m}
\E\left[
G_{ab}^{\left(\ell\right)}
\overline{
G_{cd}^{\left(m\right)}
}
\right] 
=
\delta_{ac}\delta_{bd}
\boldsymbol{v}^{\top}
\boldsymbol{\Sigma}
\overline{\boldsymbol{v}} 
=
\delta_{ac}\delta_{bd}
\sigma_{\boldsymbol{v}}^2.
\end{aligned}
\]
Similarly,
\[
\begin{aligned}
\E\left[
\left(H_K\left(\boldsymbol{v}\right)\right)_{ab}
\left(H_K\left(\boldsymbol{v}\right)\right)_{cd}
\right]
=
\sum_{\ell,m=1}^r
v_\ell v_m
\E\left[
G_{ab}^{\left(\ell\right)}
G_{cd}^{\left(m\right)}
\right] 
=0.
\end{aligned}
\]
The vectors \(\boldsymbol{G}_{ab}\) are independent over the index pairs
\(\left(a,b\right)\). Consequently, the entries of
\(H_K\left(\boldsymbol{v}\right)\) are independent circular complex Gaussian variables with variance
\(\sigma_{\boldsymbol{v}}^2\). Since \(\boldsymbol{\Sigma}\) is positive definite and
\(\boldsymbol{v}\neq\boldsymbol{0}\), we have
\(\sigma_{\boldsymbol{v}}>0\), and therefore
\[
H_K\left(\boldsymbol{v}\right)
\overset{\mathrm{d}}{=}
\sigma_{\boldsymbol{v}}G_K.
\]
The matrix \(H_K\left(\boldsymbol{v}\right)\) is also independent of
\(\left(D_1,\ldots,D_K\right)\). It follows that, as random elements of
\(\operatorname{Hol}\left(\mathbb{C}\right)\),
\begin{align}
Q_{\boldsymbol{\Sigma}}^{[K]}
\left(\,\cdot\,\boldsymbol{v}\right)
&=
\det\left(
I_K
-
\left(\,\cdot\,\right)
\operatorname{diag}\left(D_1,\ldots,D_K\right)
H_K\left(\boldsymbol{v}\right)
\right) \notag\\
&\overset{\mathrm{d}}{=}
\det\left(
I_K
-
\sigma_{\boldsymbol{v}}
\left(\,\cdot\,\right)
\operatorname{diag}\left(D_1,\ldots,D_K\right)
G_K
\right) \notag\\
&=
Q^{[K]}
\left(
\sigma_{\boldsymbol{v}}\,\cdot
\right).
\label{eq:finite-line-restriction}
\end{align}

The restriction map
\[
\mathcal{R}_{\boldsymbol{v}}
\colon
\operatorname{Hol}\left(\mathbb{C}^r\right)
\rightarrow
\operatorname{Hol}\left(\mathbb{C}\right),
\qquad
\mathcal{R}_{\boldsymbol{v}}f\left(t\right)
:=
f\left(t\boldsymbol{v}\right),
\]
is continuous for the compact-open topologies. Indeed, for every compact
\(K_0\subset\mathbb{C}\), the set
\(
\left\{
t\boldsymbol{v}:t\in K_0
\right\}
\)
is compact in \(\mathbb{C}^r\). Uniform convergence on compact subsets of
\(\mathbb{C}^r\) therefore implies uniform convergence after restriction to
\(K_0\).

The compact-uniform convergence proved above gives
\[
Q_{\boldsymbol{\Sigma}}^{[K]}
\left(\,\cdot\,\boldsymbol{v}\right)
\overset{\mathbb{P}}{\rightarrow}
Q_{\boldsymbol{\Sigma}}
\left(\,\cdot\,\boldsymbol{v}\right)
\text{in}
\operatorname{Hol}\left(\mathbb{C}\right).
\]
Theorem~\ref{thm:determinant-field} similarly gives
\[
Q^{[K]}
\left(
\sigma_{\boldsymbol{v}}\,\cdot
\right)
\overset{\mathbb{P}}{\rightarrow}
Q\left(
\sigma_{\boldsymbol{v}}\,\cdot
\right)
\text{in}
\operatorname{Hol}\left(\mathbb{C}\right),
\]
because composition with the dilation
\(t\mapsto\sigma_{\boldsymbol{v}}t\) is continuous in the compact-open topology.
The two sequences in these displays have the same law for every \(K\) by
\eqref{eq:finite-line-restriction}. Uniqueness of weak limits therefore yields
\[
Q_{\boldsymbol{\Sigma}}
\left(\,\cdot\,\boldsymbol{v}\right)
\overset{\mathrm{d}}{=}
Q\left(
\sigma_{\boldsymbol{v}}\,\cdot
\right)
\text{in}
\operatorname{Hol}\left(\mathbb{C}\right),
\]
which proves \eqref{eq:line-restriction-law}.

Applying the same continuous restriction map to
\eqref{eq:multivariate-field-convergence} gives
\[
Q_n\left(\,\cdot\,\boldsymbol{v}\right)
\overset{\mathrm{d}}{\rightarrow}
Q_{\boldsymbol{\Sigma}}
\left(\,\cdot\,\boldsymbol{v}\right)
\text{in}
\operatorname{Hol}\left(\mathbb{C}\right).
\]
Since
\[
Q_{\boldsymbol{\Sigma}}
\left(\boldsymbol{0}\right)=1
\qquad
\text{almost surely},
\]
the limiting line restriction is almost surely not identically zero. Continuity of the zero-counting map at nonzero entire functions, used in the proof of
Theorem~\ref{thm:microscopic-spectrum}, consequently gives
\[
\mathcal{Z}\left(
Q_n\left(\,\cdot\,\boldsymbol{v}\right)
\right)
\overset{\mathrm{d}}{\rightarrow}
\mathcal{Z}\left(
Q_{\boldsymbol{\Sigma}}
\left(\,\cdot\,\boldsymbol{v}\right)
\right)
\qquad
\text{in }
\mathcal{N}\left(\mathbb{C}\right).
\]

If \(z\) is a zero of \(Q\) of multiplicity \(m\), then
\(z/\sigma_{\boldsymbol{v}}\) is a zero of
\(t\mapsto Q\left(\sigma_{\boldsymbol{v}}t\right)\) with the same multiplicity. Hence
\[
\mathcal{Z}\left(
Q\left(
\sigma_{\boldsymbol{v}}\,\cdot
\right)
\right)
=
\sum_{\substack{z\in\mathbb{C}\\Q\left(z\right)=0}}
m_Q\left(z\right)
\delta_{z/\sigma_{\boldsymbol{v}}}.
\]
Theorem~\ref{thm:microscopic-spectrum}\textup{(i)} identifies
\(\mathcal{Z}\left(Q\right)\) in distribution with
\(\operatorname{Gin}_{\infty}\). Combining the preceding displays therefore gives
\[
\mathcal{Z}\left(
Q_n\left(\,\cdot\,\boldsymbol{v}\right)
\right)
\overset{\mathrm{d}}{\rightarrow}
\sum_{Z\in\operatorname{Gin}_{\infty}}
\delta_{Z/\sigma_{\boldsymbol{v}}}.
\]
Equivalently, after rescaling the line parameter by
\(\sigma_{\boldsymbol{v}}\),
\[
\sum_{\substack{t\in\mathbb{C}\\
Q_n\left(t\boldsymbol{v}\right)=0}}
m_{Q_n\left(\,\cdot\,\boldsymbol{v}\right)}\left(t\right)
\delta_{\sigma_{\boldsymbol{v}}t}
\overset{\mathrm{d}}{\rightarrow}
\operatorname{Gin}_{\infty}.
\]

For \(r\geq 2\), the full zero set
\(
\left\{
\boldsymbol{z}\in\mathbb{C}^r:
Q_{\boldsymbol{\Sigma}}\left(\boldsymbol{z}\right)=0
\right\}
\)
is a proper analytic hypersurface, possibly with singular points. Choosing any one fixed nonzero
\(\boldsymbol{v}\) and using the line-restriction law shows that this hypersurface is nonempty almost surely. The result above describes its intersection with each fixed deterministic complex line. 
\end{proof}

\appendix

\section{Normalization of the hard-edge and local-law inputs}
\label{app:hard-edge-inputs}

This appendix records the conventions needed to apply the three hard-edge results used in the proof of Theorem~\ref{thm:determinant-field}. We order singular values increasingly,
\[
s_1\left(T\right)\leq\cdots\leq s_n\left(T\right),
\]
whereas \citet{TaoVu2010} use the decreasing order
\(\sigma_1\left(T\right)\geq\cdots\geq\sigma_n\left(T\right)\). Thus,
\[
s_j\left(T\right)=\sigma_{n-j+1}\left(T\right),
\qquad
1\leq j\leq n.
\]
The matrices \(A_n\) and \(B_n\) have entries of variance one. Whenever a cited local law uses entries of variance \(n^{-1}\), we apply it after dividing the relevant matrix by \(\sqrt{n}\).

For a complex random variable \(\Xi=X+\mathrm{i}Y\), the complex normalization in \citet[Section~1, p.~260]{TaoVu2010} is
\[
\E\left[\Xi\right]=0,
\qquad
\E\left[X^2\right]=\E\left[Y^2\right]=\frac{1}{2},
\qquad
\E\left[XY\right]=0.
\]
Under \(\E\left[\left|\Xi\right|^2\right]=1\), this is equivalent to \(\E\left[\Xi^2\right]=0\). Hence \(\Xi_A\) satisfies their complex normalization.

For \(1\leq k\leq n\), write
\[
\boldsymbol{\Lambda}_{k,n}
:=
\left(
n s_1\left(A_n\right)^2,
\ldots,
n s_k\left(A_n\right)^2
\right)
=
\left(
n\sigma_n\left(A_n\right)^2,
\ldots,
n\sigma_{n-k+1}\left(A_n\right)^2
\right),
\]
and let \(\boldsymbol{\Lambda}_{k,n}^{\mathrm{Gin}}\) denote the corresponding vector for a raw variance-one complex Ginibre matrix. Assumption~\ref{ass:matrix-atoms} supplies the sufficiently high moment required by \citet[Theorem~6.2, pp.~278--279]{TaoVu2010}. That theorem gives a constant \(c>0\) such that, for every \(1\leq k\leq n^c\) and every Borel set \(\Omega\subset\mathbb{R}_+^k\),
\[
\begin{aligned}
&\Pr\left(
\boldsymbol{\Lambda}_{k,n}^{\mathrm{Gin}}
\in
\Omega\setminus\partial_{n^{-c}}\Omega
\right)
-n^{-c}
\leq
\Pr\left(
\boldsymbol{\Lambda}_{k,n}
\in\Omega
\right) \leq
\Pr\left(
\boldsymbol{\Lambda}_{k,n}^{\mathrm{Gin}}
\in
\Omega\cup\partial_{n^{-c}}\Omega
\right)
+n^{-c},
\end{aligned}
\]
where \(\partial_\delta\Omega\) is the set of points at \(\ell^\infty\)-distance at most \(\delta\) from \(\partial\Omega\). In particular, Theorem~6.2 itself permits a growing number of coordinates; the restriction to fixed \(k\) occurs only when the authors subsequently discuss an explicit limiting distribution.

The coupling used in the proof follows directly from this comparison. Put \(\delta_n=n^{-c}\). If \(F\subset\mathbb{R}_+^k\) is closed and
\[
F^r
:=
\left\{
\boldsymbol{x}\in\mathbb{R}_+^k:
\operatorname{dist}_\infty\left(\boldsymbol{x},F\right)\leq r
\right\},
\]
then the upper inequality gives
\[
\Pr\left(
\boldsymbol{\Lambda}_{k,n}\in F
\right)
\leq
\Pr\left(
\boldsymbol{\Lambda}_{k,n}^{\mathrm{Gin}}\in F^{\delta_n}
\right)
+\delta_n.
\]
Applying the lower inequality with \(\Omega=F^{2\delta_n}\) gives
\[
\Pr\left(
\boldsymbol{\Lambda}_{k,n}^{\mathrm{Gin}}\in F
\right)
\leq
\Pr\left(
\boldsymbol{\Lambda}_{k,n}\in F^{2\delta_n}
\right)
+\delta_n.
\]
After enlarging the distance by a factor of two, these are the two Prokhorov inequalities. Strassen's theorem therefore provides a coupling satisfying
\[
\Pr\left(
\left\|
\boldsymbol{\Lambda}_{k,n}
-
\boldsymbol{\Lambda}_{k,n}^{\mathrm{Gin}}
\right\|_\infty
>2\delta_n
\right)
\leq
2\delta_n.
\]
This is the form applied with \(k=\lfloor n^\alpha\rfloor\) in the proof of \eqref{eq:inverse-hard-edge-tail}.

For fixed \(k\), Theorem~6.2 and the Gaussian hard-edge limit give
\[
\boldsymbol{\Lambda}_{k,n}
\overset{\mathrm{d}}{\rightarrow}
\left(X_1,\ldots,X_k\right).
\]
The factor \(4n\) appearing in the Bessel-kernel convention of \citet[Theorem~6.3, p.~279]{TaoVu2010} is not part of this definition: both Theorem~6.2 and our hard-edge coordinates use \(n s_j^2\). The exact complex-Gaussian formula in \citet[Theorem~1.1, p.~262]{TaoVu2010} is
\[
\Pr\left(
n s_1\left(G_n^{\mathrm{Gin}}\right)^2\leq t
\right)
=
1-e^{-t},
\qquad
t\geq0.
\]
Only the fixed atom \(\Xi_A\) is used here. The extension discussed in \citet[Section~6.6, pp.~281--282]{TaoVu2010} is not required by the main theorem.

Consider the Hermitian linearization
\[
H_n
:=
\begin{pmatrix}
0&A_n/\sqrt{n}\\
A_n^*/\sqrt{n}&0
\end{pmatrix},
\qquad
R_n\left(z\right)
:=
\left(H_n-zI_{2n}\right)^{-1}.
\]
In the notation of \citet[Theorem~1.1 and equation~\textup{(1.10a)}, p.~3]{AjankiErdosKruger2014}, the ambient dimension is \(N=2n\) and the parameter in assumption \textup{(A1)} may be taken as \(M=n\). For every fixed \(\delta_0\in\left(0,1\right)\), one has \(N^{\delta_0}\leq M\leq N\) for all sufficiently large \(n\). The variance matrix of \(H_n\) is
\[
\mathcal{S}_n
=
\begin{pmatrix}
0&n^{-1}\boldsymbol{1}_n\boldsymbol{1}_n^*\\
n^{-1}\boldsymbol{1}_n\boldsymbol{1}_n^*&0
\end{pmatrix}.
\]
Its nonzero entries equal \(1/n=1/M\), its row sums equal one, and
\[
\operatorname{spec}\left(\mathcal{S}_n\right)
=
\left\{-1,0,1\right\}.
\]
Thus assumptions \textup{(A1)}--\textup{(A3)} in that paper hold, with any fixed \(\rho\in\left(0,1\right)\). On every nonzero-variance entry, division by the square root of the variance leaves a variable with law \(\Xi_A\) or \(\overline{\Xi_A}\); their normalized moment condition \textup{(1.3)} therefore follows from Assumption~\ref{ass:matrix-atoms}.

Let
\[
m_{\mathrm{sc}}\left(z\right)
:=
\frac{-z+\sqrt{z^2-4}}{2},
\]
with the branch satisfying \(\Im m_{\mathrm{sc}}\left(z\right)>0\) on \(\mathbb{C}_+\). Under the preceding substitutions, equation~\textup{(1.10a)} becomes
\[
\max_{1\leq x,y\leq2n}
\left|
\left(R_n\left(z\right)\right)_{xy}
-
m_{\mathrm{sc}}\left(z\right)\delta_{xy}
\right|
\prec
\sqrt{
\frac{
\Im m_{\mathrm{sc}}\left(z\right)
}{n\Im z}
}
+
\frac{1}{n\Im z},
\]
uniformly for \(\left|z\right|\leq10\) and \(\Im z\geq n^{-1+\gamma}\), for every fixed \(\gamma>0\). Taking \(\eta_n=n^{-1+\gamma}\) and restricting \(E\) to \(\left[-1,1\right]\) gives
\[
\max_{1\leq x\leq2n}
\left|
\left(R_n\left(E+\mathrm{i}\eta_n\right)\right)_{xx}
-
m_{\mathrm{sc}}\left(E+\mathrm{i}\eta_n\right)
\right|
\prec
n^{-\gamma/2}+n^{-\gamma}.
\]
To evaluate this estimate at random eigenvalues, take an \(n^{-4}\)-net of \(\left[-1,1\right]\), use the probability form of stochastic domination on the net, and extend it by
\[
\left\|
R_n\left(E+\mathrm{i}\eta_n\right)
-
R_n\left(E'+\mathrm{i}\eta_n\right)
\right\|_{\mathrm{op}}
\leq
\frac{\left|E-E'\right|}{\eta_n^2}.
\]
Since \(n^{-4}\eta_n^{-2}=n^{-2-2\gamma}\), this yields
\[
\sup_{\left|E\right|\leq1}
\max_{1\leq x\leq2n}
\Im\left(R_n\left(E+\mathrm{i}\eta_n\right)\right)_{xx}
\leq C
\]
with probability tending to one faster than any inverse power of \(n\).

Let \(\boldsymbol{u}_{n,a}\) and \(\boldsymbol{v}_{n,a}\) be the left and right singular vectors associated with \(s_a\left(A_n\right)\). The vectors
\[
\boldsymbol{q}_{n,a}^{\pm}
:=
\frac{1}{\sqrt{2}}
\begin{pmatrix}
\boldsymbol{u}_{n,a}\\
\pm\boldsymbol{v}_{n,a}
\end{pmatrix}
\]
are normalized eigenvectors of \(H_n\) with eigenvalues
\(\lambda_{n,a}^{\pm}=\pm s_a\left(A_n\right)/\sqrt{n}\). For fixed \(a\), hard-edge convergence gives \(\lambda_{n,a}^{\pm}=O_{\mathbb{P}}\left(n^{-1}\right)\), so these eigenvalues lie in \(\left[-1,1\right]\) with probability tending to one. If \(\boldsymbol{q}\) is a normalized eigenvector with eigenvalue \(\lambda\), then
\[
\Im\left(R_n\left(\lambda+\mathrm{i}\eta_n\right)\right)_{xx}
\geq
\frac{\left|\boldsymbol{q}\left(x\right)\right|^2}{\eta_n}.
\]
Consequently, for every fixed \(K\) and every \(\varepsilon>0\), choosing \(\gamma<2\varepsilon\) gives
\[
\max_{1\leq a\leq K}
\left\|\boldsymbol{u}_{n,a}\right\|_\infty
+
\max_{1\leq a\leq K}
\left\|\boldsymbol{v}_{n,a}\right\|_\infty
\leq
n^{-1/2+\varepsilon}
\]
with probability tending to one. This is the estimate used in \eqref{eq:fixed-singular-vector-delocalization}.
Apply \citet[Theorem~2.7\textup{(iii)} and equation~\textup{(2.11)}, p.~8]{AltErdosKruger2017} to
\[
X_n:=\frac{A_n}{\sqrt{n}}.
\]
Both dimensions in the cited paper are equal to our \(n\), and the variance profile is
\[
s_{ij}
:=
\E\left[
\left|\left(X_n\right)_{ij}\right|^2
\right]
=
\frac{1}{n}.
\]
Condition \textup{(A)} holds with \(s^*=2\), condition \textup{(C)} follows from the moments of \(\Xi_A\), condition \textup{(E1)} is the square-matrix assumption, and condition \textup{(F1)} holds with one block, \(I_1=\left[n\right]\), \(Z=\left(1\right)\), and source parameter \(\varphi=1\).

For this constant profile, equation~\textup{(2.1)} in that paper reduces to
\[
-\frac{1}{m\left(\zeta\right)}
=
\zeta-
\frac{1}{1+m\left(\zeta\right)},
\]
whose measure is the standard Marchenko--Pastur law with density
\[
\varrho_{\mathrm{MP}}\left(x\right)
=
\frac{1}{2\pi}
\sqrt{\frac{4-x}{x}}
\1_{\left(0,4\right)}\left(x\right).
\]
The increasingly ordered eigenvalues of \(X_nX_n^*\) are
\[
L_{n,j}
:=
\frac{s_j\left(A_n\right)^2}{n},
\qquad
1\leq j\leq n.
\]
Define \(\gamma_{n,j}\) by
\[
\int_0^{\gamma_{n,j}}
\varrho_{\mathrm{MP}}\left(x\right)\,\mathrm{d}x
=
\frac{j}{n}.
\]
Then the index \(i\left(\gamma_{n,j}\right)\) from equation~\textup{(2.7)} of that paper equals \(j\). Since \(\varrho_{\mathrm{MP}}\) is decreasing and remains positive at its median, the density condition in equation~\textup{(2.11)} holds with one fixed positive constant for every \(1\leq j\leq\lfloor n/2\rfloor\). Consequently, for every \(\vartheta>0\) and \(D>0\),
\[
\Pr\left(
\exists\,1\leq j\leq\left\lfloor n/2\right\rfloor:
\left|L_{n,j}-\gamma_{n,j}\right|
>
\frac{n^\vartheta}{n}
\left(
\sqrt{\gamma_{n,j}}+
\frac{1}{n}
\right)
\right)
\leq
C_{\vartheta,D}n^{-D}.
\]

On the interval from zero to the Marchenko--Pastur median,
\(\varrho_{\mathrm{MP}}\left(x\right)\asymp x^{-1/2}\). Therefore,
\[
\gamma_{n,j}
\asymp
\left(\frac{j}{n}\right)^2,
\qquad
1\leq j\leq\left\lfloor n/2\right\rfloor,
\]
and the rigidity error is bounded by
\[
C n^{-2+\vartheta}\left(j+1\right).
\]
Let \(m_n=\lfloor n^\alpha\rfloor\) and choose \(0<\vartheta<\alpha\). Uniformly for \(m_n\leq j\leq\lfloor n/2\rfloor\),
\[
\frac{
n^{-2+\vartheta}\left(j+1\right)
}{
\gamma_{n,j}
}
\leq
C\frac{n^\vartheta}{j}
\rightarrow0.
\]
It follows, with probability at least \(1-C_Dn^{-D}\) for every fixed \(D>0\), that
\[
L_{n,j}
\geq
c\frac{j^2}{n^2},
\qquad
m_n\leq j\leq\left\lfloor n/2\right\rfloor.
\]
Since
\[
D_{n,j}^2
=
\frac{1}{n s_j\left(A_n\right)^2}
=
\frac{1}{n^2L_{n,j}},
\]
we obtain
\[
D_{n,j}^2
\leq
\frac{C}{j^2},
\qquad
m_n\leq j\leq\left\lfloor n/2\right\rfloor.
\]
At \(j=\lfloor n/2\rfloor\), the same estimate gives \(L_{n,j}\geq c\). Monotonicity then gives \(D_{n,j}^2\leq Cn^{-2}\) for \(j>n/2\), and hence
\[
\sum_{j>m_n}D_{n,j}^2
\leq
C\sum_{j>m_n}^{\lfloor n/2\rfloor}\frac{1}{j^2}
+
\frac{C}{n}
\leq
\frac{C}{m_n}
+
\frac{C}{n}
\]
with overwhelming probability. This is the rigidity estimate combined with the growing-window comparison in the proof of \eqref{eq:inverse-hard-edge-tail}.

\section{Uniform hard-edge comparison for triangular arrays}
\label{app:triangular-arrays}

We remove the restriction that the atom laws remain fixed as \(n\) varies. The only part of the preceding proof that requires additional work is the comparison of a growing number of hard-edge singular values. The local laws, the rotated-block central limit theorem, and the determinant truncation are already uniform under common moment bounds.

For \(m\leq n\), define
\[
\boldsymbol{\Lambda}_{n,m}
:=
\left(
n s_1\left(A_n\right)^2,
\ldots,
n s_m\left(A_n\right)^2
\right),
\]
and let
\[
\boldsymbol{\Lambda}_{n,m}^{\mathrm{Gin}}
:=
\left(
n s_1\left(G_n^{\mathrm{Gin}}\right)^2,
\ldots,
n s_m\left(G_n^{\mathrm{Gin}}\right)^2
\right),
\]
where \(G_n^{\mathrm{Gin}}\) is an \(n\times n\) complex Ginibre matrix with variance-one entries. For a Borel set
\(\Omega\subseteq\left[0,\infty\right)^m\) and \(\delta>0\), write
\[
\partial_\delta\Omega
:=
\left\{
\boldsymbol{x}\in\left[0,\infty\right)^m:
\operatorname{dist}_{\infty}
\left(
\boldsymbol{x},
\partial\Omega
\right)
\leq\delta
\right\}.
\]

\begin{theorem}[Uniform growing-window hard-edge comparison]
\label{thm:triangular-hard-edge}
For each \(p\geq1\), fix \(L_p<\infty\). Let
\(\left(\Xi_n\right)_{n\geq1}\) be complex random variables satisfying
\[
\E\left[\Xi_n\right]=0,
\qquad
\E\left[\left|\Xi_n\right|^2\right]=1,
\qquad
\E\left[\Xi_n^2\right]=0,
\qquad
\sup_{n\geq1}
\E\left[\left|\Xi_n\right|^p\right]
\leq L_p
\]
for every \(p\geq1\). Suppose that \(A_n\) has independent entries with common law
\(\Xi_n\) and is almost surely invertible for every \(n\). This last condition is automatic under the density assumption imposed in the triangular-array application below.

There exist constants \(c_{\mathrm{win}},c_{\mathrm{cmp}}>0\) and
\(n_0<\infty\), depending only on a finite collection of the bounds
\(\left(L_p\right)_{p\geq1}\), such that, for every
\[
1\leq m\leq n^{c_{\mathrm{win}}},
\qquad
n\geq n_0,
\]
and every Borel set
\(\Omega\subseteq\left[0,\infty\right)^m\),
\begin{equation}
\begin{aligned}
&\Pr\left(
\boldsymbol{\Lambda}_{n,m}^{\mathrm{Gin}}
\in
\Omega\setminus
\partial_{n^{-c_{\mathrm{cmp}}}}\Omega
\right)
-
n^{-c_{\mathrm{cmp}}}
\leq
\Pr\left(
\boldsymbol{\Lambda}_{n,m}\in\Omega
\right)
\leq
\Pr\left(
\boldsymbol{\Lambda}_{n,m}^{\mathrm{Gin}}
\in
\Omega\cup
\partial_{n^{-c_{\mathrm{cmp}}}}\Omega
\right)
+
n^{-c_{\mathrm{cmp}}}.
\end{aligned}
\label{eq:triangular-hard-edge-comparison}
\end{equation}
The constants are uniform over all triangular arrays satisfying the displayed moment and circularity conditions.
\end{theorem}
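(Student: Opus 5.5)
The plan is to re-run the proof of \citet[Theorem~6.2]{TaoVu2010} while tracking which constants depend on the atom law. That proof has two stages. First, the smallest $m$ squared singular values of $A_n$ are compared with those of a matrix in which a thin block of entries has been replaced, using the reduction to a Wishart-type $(n-s)\times n$ problem via the projection onto the orthogonal complement of $s=n^{c}$ columns. Second, this reduced problem is compared with the Gaussian one through a Lindeberg swapping argument, together with the least-singular-value tail bounds of Tao--Vu and Rudelson--Vershynin. I would first check that the only inputs depending on $\Xi_n$ are of four kinds:
\begin{enumerate}[label=\textup{(\alph*)},leftmargin=*]
\item finitely many moments, up to some order $p_0$;
\item the matching of $\E\Xi_n$, $\E|\Xi_n|^2$ and $\E\Xi_n^2$ with the Gaussian values;
\item a lower tail bound $\Pr\bigl(s_1(A_n)\leq \varepsilon n^{-1/2}\bigr)\leq C\varepsilon+Cn^{-c}$;
\item delocalization and concentration of projections of rows onto random subspaces, which rests on Talagrand or Hanson--Wright type inequalities with constants controlled by $L_{p_0}$.
\end{enumerate}
The comparison exponent $c_{\mathrm{cmp}}$, the window exponent $c_{\mathrm{win}}$ and the threshold $n_0$ are then explicit functions of $p_0$ and of the constants in (c) and (d). If each of those constants is bounded by a function of $L_1,\ldots,L_{p_0}$, uniformity follows.

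The key steps, in order, are these.
\begin{enumerate}[label=\textup{(\arabic*)},leftmargin=*]
\item Prove (c) uniformly. The Rudelson--Vershynin invertibility theorem has constants depending only on a subgaussian or fourth-moment bound and on the L\'evy concentration function of the entry. Under a uniform $L_4$ bound and unit variance, the Paley--Zygmund inequality gives a uniform anti-concentration bound $\sup_x\Pr(|\Xi_n-x|\leq\epsilon_0)\leq 1-\epsilon_0$. Truncating at level $n^{\kappa}$ and using Markov's inequality with $L_{p}$ costs $n^{-c}$ uniformly.
\item Establish uniform delocalization of the normal vectors and uniform concentration of the distance from a row to a subspace. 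The usual proof of the latter truncates the entries, and Talagrand's inequality then gives constants that depend only on the truncation level and on $L_{p_0}$.
\item Run the Lindeberg replacement for the Stieltjes transform, or for smooth test functions of the reduced Wishart matrix. The third-order remainder is bounded by $\E|\Xi_n|^3\leq L_3$ times derivative bounds, and those derivative bounds are controlled on the high-probability event supplied by steps (1) and (2). The second-order terms cancel exactly because of circularity: the vanishing of $\E\Xi_n^2$ is needed to match the real bivariate covariance, as in the appendix discussion above.
\item Convert the smooth-function comparison into the set-wise inequality \eqref{eq:triangular-hard-edge-comparison} by the same mollification at scale $n^{-c_{\mathrm{cmp}}}$ used by Tao--Vu. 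A union over $m\leq n^{c_{\mathrm{win}}}$ coordinates is absorbed by choosing $c_{\mathrm{win}}$ small relative to the polynomial gain.
\end{enumerate}

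I expect the main obstacle to be step (2), together with the part of step (1) that depends on the law. Tao--Vu state their theorem for a fixed law, so I would have to verify that no hidden $n$-independent but law-dependent constant enters. The places to check are the choice of truncation level and the small-ball estimates for inner products of rows with fixed unit vectors. My first attempt would be to replace every such qualitative input by a quantitative estimate that depends only on the moment bounds. Concretely, I would take Berry--Esseen bounds for $\langle X,v\rangle$ with error $C L_3\|v\|_\infty$, combined with the delocalization of $v$ from step (2), which gives small-ball probabilities uniform in $n$. If some step genuinely needs a density bound, I would add a uniform $\|\varphi_n\|_\infty$ hypothesis, as is available in the application. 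Once these inputs are uniform, the remaining algebra of the proof carries over verbatim, and the constants depend only on $L_1,\ldots,L_{p_0}$ for a fixed finite $p_0$.
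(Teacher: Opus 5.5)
Your top-level plan is also the paper's plan: re-run \citet[Theorem~6.2]{TaoVu2010} and check that every constant depends only on finitely many $L_p$. However, the argument you propose to re-run is not Tao--Vu's, and as described it does not close. Neither Tao--Vu nor the paper swaps entries of an $n\times n$ or $(n-s)\times n$ matrix, and no least-singular-value tail bound is used.

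The actual reduction samples $s=\lfloor n^{\alpha}\rfloor$ random rows of $A_n^{-1}$ (their Lemma~3.1), so that $A_n^{-*}A_n^{-1}\approx\frac{n}{s}\mathsf{B}_n^*\mathsf{B}_n$. By exchangeability and their Lemma~3.3, the singular values of the sampled rows are reciprocals of those of an $s\times s$ matrix $\mathsf{M}_n$. This matrix is obtained by projecting $\boldsymbol{X}_1,\ldots,\boldsymbol{X}_s$ onto the $s$-dimensional space $V_{s,n}=\operatorname{span}(\boldsymbol{X}_{s+1},\ldots,\boldsymbol{X}_n)^{\perp}$. Conditionally on the last $n-s$ columns, $\mathsf{M}_n=\sum_{i,j}(A_n)_{ji}\mathsf{V}_{ij}$ is a sum of independent terms. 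On the nondegeneracy event, $\max_{i,j}\lVert\mathsf{V}_{ij}\rVert_{\mathrm{HS}}\leq n^{-c_{\mathrm{nd}}}$. It is at this low-dimensional level that matching only the circular second moments suffices: the Berry--Esseen (equivalently, Lindeberg) error is $n^{-c_{\mathrm{nd}}}$ times a power of $s$.

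A Lindeberg swap of the $n^2$ entries of the full matrix fails with only two matched moments, whether applied to the Stieltjes transform or to smooth functions of $n s_j(A_n)^2$. Heuristically, each entry moves $n s_j(A_n)^2$ by about $n^{-1/2}$. The unmatched third-order term is then of order $n^{-3/2}$ per entry, and of order $n^{1/2}$ after summing over $n^2$ entries. This is the same counting that forces extra matching moments in four-moment theorems. No high-probability derivative bound from your steps (1)--(2) removes it.

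Your step (2) and your Berry--Esseen small-ball idea are genuine ingredients of the paper's proof. They give \eqref{eq:triangular-nondegeneracy} and the bound $\Pr(d_i\leq t)\leq C(t+n^{-c_{\mathrm{nd}}/4})$. What is missing is the estimate that makes the sampling step valid. Lemma~3.1 bounds the expected squared Frobenius error by $\frac{n}{s}\sum_i\lVert\boldsymbol{R}_i\rVert_2^4$, and this error must be $o(n)$. So every row of $A_n^{-1}$ needs norm $n^{O(1/C_0)}$, which is \eqref{eq:triangular-inverse-row-bound}.

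Your input (c) does not give this. It only yields $\lVert\boldsymbol{R}_i\rVert_2\leq\lVert A_n^{-1}\rVert_{\mathrm{op}}\lesssim\sqrt{n}/\varepsilon$, hence an error of order $n^{2}/(\sqrt{s}\,\varepsilon^{2})$, which is useless for $s\leq n$. The small-ball bound cannot be union-bounded over all $n$ rows either. The paper applies it only to the first $\ell_n=\lfloor n^{30/C_0}\rfloor$ distances. It then uses \citet[Lemma~5.3]{TaoVu2010}, with concentration of $\lVert\pi_{\ell_n,j}\boldsymbol{X}_i\rVert_2$, to bound every remaining $d_j$ below by $n^{-61/C_0}$. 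An alternative would combine (c) with delocalization of all right singular vectors and rigidity, via $\lVert\boldsymbol{R}_i\rVert_2^2=\sum_k\lvert(V_n)_{ik}\rvert^2 s_k(A_n)^{-2}$. Either way, some such argument has to be supplied.

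Finally, the growing window is not handled by a union over coordinates. The coupling is joint in dimension $s^2$. The restriction $m\leq n^{c_{\mathrm{win}}}$ enters when the reciprocal comparison $\lvert\lambda_{n,j}^{-1}-\mu_{n,j}^{-1}\rvert\leq n^{-25/C_0}$ is converted into a direct one. That conversion needs $\mu_{n,j}\leq Cn^{10/C_0}$ for $j\leq m$, which comes from Gaussian rigidity at the index $J_s=\lceil s^{\eta}\rceil\geq m$.
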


\begin{proof}
The proof follows the sampling and projection argument underlying
\cite[Theorem~6.2, p.~278]{TaoVu2010}, but we keep the law dependence and the growing dimension explicit.

Fix
\[
\theta:=\frac{1}{20},
\qquad
\eta:=\frac{1}{100},
\qquad
c_{\mathrm{nd}}:=\frac{\theta}{4}.
\]
Choose an integer \(C_0\) so large that, with
\(
\alpha:=500/C_0,
\)
\[
\alpha<1-\theta,
\qquad
1-\theta-\frac{20}{C_0}>0,
\qquad
\frac{40}{C_0}<1,
\qquad
\frac{31}{C_0}<\frac{c_{\mathrm{nd}}}{4},
\qquad
11\alpha<\frac{c_{\mathrm{nd}}}{2},
\qquad
2\alpha-1<-\frac{1}{C_0}.
\]
These inequalities respectively control rank interlacing, projection concentration, truncation, the inverse-row union bound, the growing-dimensional Gaussian approximation, and collisions among the sampled indices. The uniform moment assumption gives
\[
\sup_{n\geq1}
\E\left[
\left|\Xi_n\right|^{C_0}
\right]
\leq
L_{C_0}.
\]
We will take
\[
c_{\mathrm{win}}
=
\frac{1}{C_0}
\qquad\text{and}\qquad
c_{\mathrm{cmp}}
=
\frac{1}{2C_0}.
\]
Whenever an estimate holds with overwhelming probability, we invoke it with the fixed exponent \(D_*=10\). Its failure probability is then negligible relative to \(n^{-1/C_0}\), which is the largest exceptional probability retained below.

\emph{Uniform truncation and normalization.}
Set
\[
\tau_n:=n^{10/C_0}.
\]
The union bound and Markov's inequality give
\[
\Pr\left(
\max_{1\leq i,j\leq n}
\left|
\left(A_n\right)_{ij}
\right|
>
\tau_n
\right)
\leq
n^2L_{C_0}\tau_n^{-C_0}
=
L_{C_0}n^{-8}.
\]

Let
\[
Y_n
:=
\Xi_n
\mathbf{1}_{\left\{
\left|\Xi_n\right|\leq\tau_n
\right\}},
\]
and regard \(Y_n\) as a vector in \(\mathbb{R}^2\). Write
\[
\boldsymbol{\mu}_n
:=
\E\left[
\begin{pmatrix}
\Re Y_n\\
\Im Y_n
\end{pmatrix}
\right]
\]
and
\[
S_n
:=
\operatorname{Cov}\left(
\begin{pmatrix}
\Re Y_n\\
\Im Y_n
\end{pmatrix}
\right).
\]
The circular second-moment assumptions on \(\Xi_n\) imply that the untruncated covariance matrix is
\(\frac{1}{2}I_2\). Moreover,
\[
\left\|
\boldsymbol{\mu}_n
\right\|_2
\leq
L_{C_0}\tau_n^{1-C_0}
\]
and
\[
\left\|
S_n-\frac{1}{2}I_2
\right\|_{\mathrm{op}}
\leq
C L_{C_0}\tau_n^{2-C_0}.
\]
For all sufficiently large \(n\), \(S_n\) is positive definite. Define
\[
T_n
:=
\left(
2S_n
\right)^{-1/2}
\]
and let \(\widehat{\Xi}_n\) be the complex random variable corresponding to
\[
T_n
\left[
\begin{pmatrix}
\Re Y_n\\
\Im Y_n
\end{pmatrix}
-
\boldsymbol{\mu}_n
\right].
\]
Then
\[
\E\left[
\widehat{\Xi}_n
\right]=0,
\qquad
\E\left[
\left|
\widehat{\Xi}_n
\right|^2
\right]=1,
\qquad
\E\left[
\widehat{\Xi}_n^2
\right]=0.
\]
Functional calculus for the positive matrix \(2S_n\) gives
\[
\left\|
T_n-I_2
\right\|_{\mathrm{op}}
\leq
C L_{C_0}\tau_n^{2-C_0}.
\]
Consequently, on the event
\(\left|\Xi_n\right|\leq\tau_n\),
\[
\left|
\widehat{\Xi}_n-\Xi_n
\right|
\leq
C L_{C_0}\tau_n^{3-C_0}.
\]

Let \(\widehat{A}_n\) be the matrix obtained by applying this construction entrywise, using the same underlying entries as \(A_n\). On the event that no entry is truncated,
\[
\left\|
A_n-\widehat{A}_n
\right\|_{\mathrm{F}}
\leq
C L_{C_0}n\tau_n^{3-C_0},
\]
while
\[
\left\|
A_n
\right\|_{\mathrm{op}}
+
\left\|
\widehat{A}_n
\right\|_{\mathrm{op}}
\leq
C n\tau_n.
\]
The singular-value perturbation inequality therefore gives, simultaneously for every \(j\),
\[
\begin{aligned}
n
\left|
s_j\left(A_n\right)^2
-
s_j\left(\widehat{A}_n\right)^2
\right|
&\leq
n
\left\|
A_n-\widehat{A}_n
\right\|_{\mathrm{op}}
\left(
\left\|
A_n
\right\|_{\mathrm{op}}
+
\left\|
\widehat{A}_n
\right\|_{\mathrm{op}}
\right)
\\
&\leq
C L_{C_0}
n^3\tau_n^{4-C_0}
\\
&=
C L_{C_0}
n^{-7+40/C_0}.
\end{aligned}
\]
By the choice of \(C_0\), the last quantity is at most \(n^{-6}\). Thus, the truncation and normalization alter the vector of scaled singular values by at most \(n^{-6}\), except on an event of probability at most
\(L_{C_0}n^{-8}\). We may therefore assume throughout the remainder of the proof that
\[
\left|\Xi_n\right|
\leq
2n^{10/C_0}
\]
almost surely, without changing the final comparison exponent.

Put
\[
s:=\left\lfloor n^\alpha\right\rfloor.
\]
Let
\(
\boldsymbol{X}_1,\ldots,\boldsymbol{X}_n
\)
be the columns of \(A_n\), and define
\[
V_{s,n}
:=
\operatorname{span}
\left(
\boldsymbol{X}_{s+1},
\ldots,
\boldsymbol{X}_n
\right)^\perp.
\]
We claim that, for every fixed \(D>0\), there is a constant \(C_D<\infty\) such that
\begin{equation}
\Pr\left(
\sup_{\substack{
\boldsymbol{v}\in V_{s,n}\\
\left\|\boldsymbol{v}\right\|_2=1
}}
\left\|
\boldsymbol{v}
\right\|_\infty
>
n^{-c_{\mathrm{nd}}}
\right)
\leq
C_Dn^{-D}.
\label{eq:triangular-nondegeneracy}
\end{equation}
The exponent \(c_{\mathrm{nd}}\), the constants \(C_D\), and the large-\(n\) thresholds are uniform over the atom array, subject only to the common moment bounds.

To prove the claim, suppose that
\(\boldsymbol{v}\in V_{s,n}\) is a unit vector satisfying
\(\left|v_1\right|\geq n^{-\theta/4}\). Let
\(\mathcal{B}_n\) be the
\(\left(n-s\right)\times n\) matrix whose rows are
\(\boldsymbol{X}_{s+1}^*,\ldots,\boldsymbol{X}_n^*\). Then
\[
\mathcal{B}_n\boldsymbol{v}=\boldsymbol{0}.
\]
Write
\[
\mathcal{B}_n
=
\left(
\boldsymbol{Y}_1,\mathcal{B}_n'
\right)
\qquad\text{and}\qquad
\boldsymbol{v}
=
\begin{pmatrix}
v_1\\
\boldsymbol{v}'
\end{pmatrix}.
\]
Hence
\[
v_1\boldsymbol{Y}_1
+
\mathcal{B}_n'\boldsymbol{v}'
=
\boldsymbol{0}.
\]

Let \(N:=n-s\), and take an \(N\times N\) square block
\(\mathcal{H}_N\) of \(\mathcal{B}_n'\). Apply
\citet[Theorem~2.7\textup{(iii)} and equation~\textup{(2.11)}]{AltErdosKruger2017}
to \(\mathcal{H}_N/\sqrt{N}\). The variance profile is constant, the matrix is square and fully indecomposable, and the common moment bounds make every model parameter uniform in \(n\). If
\[
d_N
:=
\left\lfloor
N^{1-\theta}
\right\rfloor,
\]
the corresponding Marchenko--Pastur quantile is of order
\(N^{-2\theta}\). The rigidity estimate therefore gives
\[
s_{d_N}\left(\mathcal{H}_N\right)
\leq
C N^{1/2-\theta}
\]
with overwhelming probability.

The matrix \(\mathcal{B}_n'\) is obtained from
\(\mathcal{H}_N\) by adjoining \(s-1\) columns. The difference between their Gram matrices has rank at most \(s-1\). Since
\(s=o\left(d_N\right)\), rank interlacing implies that
\(\mathcal{B}_n'\) has at least \(d_N/2\) singular values not exceeding
\(C n^{1/2-\theta}\), with overwhelming probability. Let \(W_n\) be the span of the corresponding left singular vectors. Then
\[
\dim\left(W_n\right)
\geq
\frac{d_N}{2}
\]
and
\[
\left\|
\pi_{W_n}\mathcal{B}_n'
\right\|_{\mathrm{op}}
\leq
C n^{1/2-\theta}.
\]
Conditional on \(\mathcal{B}_n'\), the vector
\(\boldsymbol{Y}_1\) is independent of \(W_n\). Applying
\citet[Lemma~4.2, pp.~275 and 290--291]{TaoVu2010}, with
\[
K=2n^{10/C_0}
\qquad\text{and}\qquad
d=\dim\left(W_n\right),
\]
gives
\[
\Pr\left(
\left\|
\pi_{W_n}\boldsymbol{Y}_1
\right\|_2
<
\frac{1}{2}
\sqrt{\dim\left(W_n\right)}
\,\middle|\,
\mathcal{B}_n'
\right)
\leq
\exp\left(
-c n^{1-\theta-20/C_0}
\right).
\]
On the complementary event,
\[
\left\|
\pi_{W_n}\boldsymbol{Y}_1
\right\|_2
\geq
c n^{1/2-\theta/2}.
\]
On the other hand, projecting
\(v_1\boldsymbol{Y}_1+\mathcal{B}_n'\boldsymbol{v}'=\boldsymbol{0}\)
onto \(W_n\) gives
\[
\left\|
\pi_{W_n}\boldsymbol{Y}_1
\right\|_2
\leq
\left|v_1\right|^{-1}
\left\|
\pi_{W_n}\mathcal{B}_n'
\right\|_{\mathrm{op}}
\leq
C n^{1/2-\theta+\theta/4}.
\]
The exponent on the last line is
\(1/2-3\theta/4\), which is strictly smaller than
\(1/2-\theta/2\). This is impossible for all sufficiently large \(n\). For a prescribed \(D>0\), invoke the rigidity estimate above with failure probability at most \(C_Dn^{-D-2}\). The conditional projection estimate is stretched exponential because \(1-\theta-20/C_0>0\). A union bound over the \(n\) coordinates therefore gives \eqref{eq:triangular-nondegeneracy} with the fixed choice \(c_{\mathrm{nd}}=\theta/4\).

Let
\(\boldsymbol{R}_1,\ldots,\boldsymbol{R}_n\)
be the rows of \(A_n^{-1}\). We prove
\begin{equation}
\Pr\left(
\max_{1\leq i\leq n}
\left\|
\boldsymbol{R}_i
\right\|_2
>
n^{100/C_0}
\right)
\leq
C n^{-1/C_0}.
\label{eq:triangular-inverse-row-bound}
\end{equation}

Let
\[
d_i
:=
\operatorname{dist}\left(
\boldsymbol{X}_i,
\operatorname{span}
\left(
\boldsymbol{X}_1,\ldots,
\boldsymbol{X}_{i-1},
\boldsymbol{X}_{i+1},\ldots,
\boldsymbol{X}_n
\right)
\right).
\]
By \citet[Lemma~5.1, p.~276]{TaoVu2010},
\[
\left\|
\boldsymbol{R}_i
\right\|_2
=
d_i^{-1}.
\]

Conditional on all columns except \(\boldsymbol{X}_i\), let
\(\boldsymbol{v}\) be a unit normal to their span. The same argument as in \eqref{eq:triangular-nondegeneracy}, with \(s=1\) and \(D=D_*\), gives
\(
\left\|
\boldsymbol{v}
\right\|_\infty
\leq
n^{-c_{\mathrm{nd}}}
\)
outside an event of probability at most \(C n^{-D_*}\). Since
\[
d_i
=
\left|
\sum_{q=1}^n
\overline{v_q}
\left(\boldsymbol{X}_i\right)_q
\right|,
\]
the frame Berry--Esseen estimate
\citep[Proposition~3.4, pp.~271--272, and Proposition~D.2, pp.~288--289]{TaoVu2010}, after absorbing the \(C n^{-D_*}\) exceptional probability into the Berry--Esseen remainder, implies that, for every \(t>0\),
\[
\Pr\left(
d_i\leq t
\right)
\leq
C
\left(
t+n^{-c_{\mathrm{nd}}/4}
\right).
\]
The constant is uniform because
\[
\sup_n
\E\left[
\left|\Xi_n\right|^3
\right]
<\infty.
\]

Set
\[
\ell_n
:=
\left\lfloor
n^{30/C_0}
\right\rfloor
\qquad\text{and}\qquad
t_n
:=
n^{-31/C_0}.
\]
A union bound gives
\[
\Pr\left(
\min_{1\leq i\leq\ell_n}
d_i<t_n
\right)
\leq
C
\left(
n^{-1/C_0}
+
n^{30/C_0-c_{\mathrm{nd}}/4}
\right).
\]
By the choice of \(C_0\), the second term is at most
\(n^{-1/C_0}\).

For \(j>\ell_n\), let \(V_{\ell_n,j}\) be the orthogonal complement of the span of
\(
\boldsymbol{X}_{\ell_n+1},
\ldots,
\boldsymbol{X}_{j-1},
\boldsymbol{X}_{j+1},
\ldots,
\boldsymbol{X}_n,
\)
and let \(\pi_{\ell_n,j}\) denote the corresponding projection. This space has dimension
\(\ell_n+1\). The concentration estimate used above, together with
\[
\frac{\ell_n}{\tau_n^2}
=
n^{10/C_0},
\]
shows that, simultaneously for
\(1\leq i\leq\ell_n\) and \(j>\ell_n\),
\[
c\sqrt{\ell_n}
\leq
\left\|
\pi_{\ell_n,j}\boldsymbol{X}_i
\right\|_2
\leq
C\sqrt{\ell_n}
\]
and
\[
c\sqrt{\ell_n}
\leq
\left\|
\pi_{\ell_n,j}\boldsymbol{X}_j
\right\|_2
\leq
C\sqrt{\ell_n}
\]
outside an event of probability at most
\(\exp\left(-n^{c/C_0}\right)\).

Lemma~5.3 of \citet[p.~277]{TaoVu2010} gives
\[
d_j
\geq
\frac{
\left\|
\pi_{\ell_n,j}\boldsymbol{X}_j
\right\|_2
}{
1+
\displaystyle
\sum_{i=1}^{\ell_n}
\frac{
\left\|
\pi_{\ell_n,j}\boldsymbol{X}_i
\right\|_2
}{
d_i
}
}.
\]
On the event that \(d_i\geq t_n\) for \(i\leq\ell_n\), the preceding projection bounds imply
\[
d_j
\geq
\frac{
c\sqrt{\ell_n}
}{
1+
C\ell_n^{3/2}t_n^{-1}
}
\geq
c\frac{t_n}{\ell_n}
\geq
n^{-61/C_0}
\]
for all sufficiently large \(n\). This proves
\eqref{eq:triangular-inverse-row-bound}.

Every probability and threshold in this argument depends on the atom law only through common moment bounds. The truncation support is deterministic and the concentration inequality is uniform over all product measures supported in the corresponding disc.

Let
\(
\mathcal{I}_1,\ldots,\mathcal{I}_s
\)
be independent uniform indices in
\(\left\{1,\ldots,n\right\}\), independent of \(A_n\), and let
\(\mathsf{B}_n\) be the \(s\times n\) matrix whose \(q\)-th row is
\(\boldsymbol{R}_{\mathcal{I}_q}\). Lemma~3.1 of
\citet[p.~270]{TaoVu2010} gives, conditional on \(A_n\),
\[
\E_{\mathcal{I}}
\left[
\left\|
A_n^{-*}A_n^{-1}
-
\frac{n}{s}
\mathsf{B}_n^*
\mathsf{B}_n
\right\|_{\mathrm{F}}^2
\right]
\leq
\frac{n}{s}
\sum_{i=1}^n
\left\|
\boldsymbol{R}_i
\right\|_2^4.
\]
On the event in
\eqref{eq:triangular-inverse-row-bound}, the right-hand side is bounded by
\[
\frac{n^2}{s}
n^{400/C_0}
\leq
n^{2-100/C_0}.
\]
Markov's inequality therefore gives
\begin{equation}
\left\|
A_n^{-*}A_n^{-1}
-
\frac{n}{s}
\mathsf{B}_n^*
\mathsf{B}_n
\right\|_{\mathrm{F}}
\leq
n^{1-25/C_0}
\label{eq:triangular-sampling-bound}
\end{equation}
except on an event of probability at most
\(n^{-50/C_0}\).

The probability that the sampled indices are not distinct is at most
\(
\frac{s^2}{n}
\leq
n^{2\alpha-1}.
\)
On the complementary event, exchangeability of the rows of \(A_n^{-1}\) permits us to identify the sampled rows, after a random permutation, with the first \(s\) rows. Let
\(
V_{s,n}
=
\operatorname{span}
\left(
\boldsymbol{X}_{s+1},
\ldots,
\boldsymbol{X}_n
\right)^\perp
\)
as above, choose an orthonormal identification of \(V_{s,n}\) with
\(\mathbb{C}^s\), and let
\(
\pi_n
:
\mathbb{C}^n
\rightarrow
\mathbb{C}^s
\)
be the resulting partial isometry. Define the \(s\times s\) matrix
\[
\mathsf{M}_n
:=
\left(
\pi_n\boldsymbol{X}_1,
\ldots,
\pi_n\boldsymbol{X}_s
\right).
\]
Lemma~3.3 of \citet[p.~271]{TaoVu2010} gives
\[
s_j\left(\mathsf{B}_n\right)
=
s_j\left(\mathsf{M}_n\right)^{-1}
\]
when the singular values of \(\mathsf{B}_n\) are read in decreasing order and those of \(\mathsf{M}_n\) increasingly.

For \(1\leq j\leq s\), put
\[
\lambda_{n,j}
:=
n s_j\left(A_n\right)^2
\qquad\text{and}\qquad
\mu_{n,j}
:=
s s_j\left(\mathsf{M}_n\right)^2.
\]
The eigenvalue perturbation inequality applied to
\eqref{eq:triangular-sampling-bound} gives
\begin{equation}
\max_{1\leq j\leq s}
\left|
\lambda_{n,j}^{-1}
-
\mu_{n,j}^{-1}
\right|
\leq
n^{-25/C_0}.
\label{eq:triangular-reciprocal-comparison}
\end{equation}

Conditional on
\(\boldsymbol{X}_{s+1},\ldots,\boldsymbol{X}_n\), the matrix
\(\mathsf{M}_n\) is a linear image of the entries in the first \(s\) columns. Under the Hilbert--Schmidt identification
\(\mathbb{C}^{s\times s}\simeq\mathbb{C}^{s^2}\), it can be written as
\[
\mathsf{M}_n
=
\sum_{i=1}^s
\sum_{j=1}^n
\left(A_n\right)_{ji}
\mathsf{V}_{ij},
\]
where the deterministic matrices
\(\mathsf{V}_{ij}\) satisfy
\[
\sum_{i=1}^s
\sum_{j=1}^n
\mathsf{V}_{ij}
\mathsf{V}_{ij}^*
=
I_{s^2}.
\]
Moreover,
\[
\left\|
\mathsf{V}_{ij}
\right\|_{\mathrm{HS}}
=
\left\|
\pi_n\boldsymbol{e}_j
\right\|_2
=
\sup_{\substack{
\boldsymbol{v}\in V_{s,n}\\
\left\|\boldsymbol{v}\right\|_2=1
}}
\left|v_j\right|.
\]
On the event in
\eqref{eq:triangular-nondegeneracy},
\[
\max_{i,j}
\left\|
\mathsf{V}_{ij}
\right\|_{\mathrm{HS}}
\leq
n^{-c_{\mathrm{nd}}}.
\]

Set
\(
\varepsilon_n
:=
n^{-2\alpha}
\)
and
\[
\delta_n
:=
C s^5\varepsilon_n^{-3}n^{-c_{\mathrm{nd}}}
=
C n^{11\alpha-c_{\mathrm{nd}}}.
\]
Conditional on \(\boldsymbol{X}_{s+1},\ldots,\boldsymbol{X}_n\) and on the event in \eqref{eq:triangular-nondegeneracy}, Proposition~D.2 of \citet[pp.~288--289]{TaoVu2010}, applied in complex dimension \(s^2\), gives the following two-parameter comparison. If \(\mu_n\) is the conditional law of \(\mathsf{M}_n\), \(\gamma_s\) is the law of an \(s\times s\) complex Ginibre matrix, and
\[
F^{\rho}
:=
\left\{
X\in\mathbb{C}^{s\times s}:
\operatorname{dist}_{\max}\left(X,F\right)<\rho
\right\},
\]
then, for every closed set \(F\subseteq\mathbb{C}^{s\times s}\),
\[
\mu_n\left(F\right)
\leq
\gamma_s\left(F^{2\varepsilon_n}\right)+\delta_n,
\qquad
\gamma_s\left(F\right)
\leq
\mu_n\left(F^{2\varepsilon_n}\right)+\delta_n.
\]
The first inequality follows from the upper Berry--Esseen comparison after enlarging the boundary neighborhood; the second follows by applying the lower comparison to \(F^{2\varepsilon_n}\). By the choice of \(C_0\), \(\delta_n\leq Cn^{-c_{\mathrm{nd}}/2}\).

The measurable-kernel form of Strassen's theorem, applicable because the conditional laws take values in a finite-dimensional Polish space, gives a conditional coupling for which the entrywise distance exceeds \(2\varepsilon_n\) with probability at most \(\delta_n\). Gluing these conditional couplings to the law of the final \(n-s\) columns and adding the exceptional probability in \eqref{eq:triangular-nondegeneracy}, invoked with \(D=D_*\), yields a joint realization satisfying
\begin{equation}
\left\|
\mathsf{M}_n
-
G_s^{\mathrm{Gin}}
\right\|_{\max}
\leq
2\varepsilon_n
\label{eq:triangular-entry-coupling}
\end{equation}
except on an event of probability at most
\(C n^{-c_{\mathrm{nd}}/2}\). Here
\(\left\|\cdot\right\|_{\max}\) is the largest entry modulus.

Put
\[
J_s
:=
\left\lceil
s^\eta
\right\rceil.
\]
The rigidity estimate
\citet[Theorem~2.7\textup{(iii)} and equation~\textup{(2.11)}]{AltErdosKruger2017},
applied to \(G_s^{\mathrm{Gin}}/\sqrt{s}\), gives
\begin{equation}
s
s_{J_s}\left(
G_s^{\mathrm{Gin}}
\right)^2
\leq
C s^{2\eta}
\label{eq:triangular-gaussian-window}
\end{equation}
with overwhelming probability.

Indeed, if
\[
L_{s,j}
:=
\frac{
s_j\left(G_s^{\mathrm{Gin}}\right)^2
}{s},
\]
then the Marchenko--Pastur quantile at
\(j=J_s\) is of order
\(
\left(
\frac{J_s}{s}
\right)^2
=
s^{-2+2\eta}.
\)
Taking the tolerance exponent in the rigidity estimate smaller than \(\eta\), its error is of smaller order than this quantile. Thus
\(
L_{s,J_s}
\leq
C s^{-2+2\eta},
\)
which is equivalent to
\eqref{eq:triangular-gaussian-window}.

For
\[
m
\leq
n^{1/C_0},
\]
our choices give
\(
m
\leq
n^{1/C_0}
\leq
n^{5/C_0}
=
s^\eta
\leq
J_s
\)
for all sufficiently large \(n\). Hence
\[
s
s_j\left(
G_s^{\mathrm{Gin}}
\right)^2
\leq
C s^{2\eta}
=
C n^{10/C_0},
\qquad
1\leq j\leq m.
\]

The entrywise coupling
\eqref{eq:triangular-entry-coupling} implies
\[
\left\|
\mathsf{M}_n
-
G_s^{\mathrm{Gin}}
\right\|_{\mathrm{op}}
\leq
2s\varepsilon_n
\leq
C s^{-1}.
\]
Therefore, for \(1\leq j\leq m\),
\[
\begin{aligned}
&
\left|
s s_j\left(\mathsf{M}_n\right)^2
-
s s_j\left(G_s^{\mathrm{Gin}}\right)^2
\right|\leq
s
\left[
2s_j\left(G_s^{\mathrm{Gin}}\right)
\left\|
\mathsf{M}_n-G_s^{\mathrm{Gin}}
\right\|_{\mathrm{op}}
+
\left\|
\mathsf{M}_n-G_s^{\mathrm{Gin}}
\right\|_{\mathrm{op}}^2
\right]
\leq
C
\left(
s^{-1/2+\eta}
+
s^{-1}
\right).
\end{aligned}
\]
In particular,
\begin{equation}
\max_{1\leq j\leq m}
\mu_{n,j}
\leq
C n^{10/C_0}
\label{eq:triangular-mu-upper}
\end{equation}
and
\begin{equation}
\max_{1\leq j\leq m}
\left|
\mu_{n,j}
-
s s_j\left(G_s^{\mathrm{Gin}}\right)^2
\right|
\leq
n^{-c_1}
\label{eq:triangular-projected-direct}
\end{equation}
for some \(c_1>0\) depending only on \(C_0\).

From
\eqref{eq:triangular-reciprocal-comparison} and
\eqref{eq:triangular-mu-upper},
\[
n^{-25/C_0}\mu_{n,j}
\leq
C n^{-15/C_0}
\]
for \(j\leq m\). Thus, for sufficiently large \(n\),
\[
\lambda_{n,j}^{-1}
\geq
\mu_{n,j}^{-1}
-
n^{-25/C_0}
\geq
\frac{1}{2}
\mu_{n,j}^{-1},
\]
and hence
\[
\lambda_{n,j}
\leq
2\mu_{n,j}.
\]
Consequently,
\[
\begin{aligned}
\left|
\lambda_{n,j}
-
\mu_{n,j}
\right|
=
\lambda_{n,j}\mu_{n,j}
\left|
\lambda_{n,j}^{-1}
-
\mu_{n,j}^{-1}
\right|
\leq
2\mu_{n,j}^2
n^{-25/C_0}
\leq
C n^{-5/C_0}.
\end{aligned}
\]
Combining this estimate with
\eqref{eq:triangular-projected-direct} gives a coupling satisfying
\begin{equation}
\max_{1\leq j\leq m}
\left|
n s_j\left(A_n\right)^2
-
s s_j\left(G_s^{\mathrm{Gin}}\right)^2
\right|
\leq
C n^{-5/C_0}
\label{eq:triangular-to-small-gaussian}
\end{equation}
outside an event of probability at most
\[
C n^{-1/C_0}.
\]
The dominant failure probability here is
\eqref{eq:triangular-inverse-row-bound}; every other exceptional probability is either overwhelming or bounded by a strictly larger negative power of \(n\).

\emph{Comparison with the Gaussian matrix of dimension \(n\).}
Apply the same sampling argument to an \(n\times n\) complex Ginibre matrix
\(G_n^{\mathrm{Gin}}\). Conditional on its final \(n-s\) columns, the projected first \(s\) columns form an exact \(s\times s\) Ginibre matrix. Indeed, orthogonal projection onto an \(s\)-dimensional subspace preserves the standard complex Gaussian law, and the projected columns remain independent. Therefore, the preceding proof, without the Berry--Esseen approximation, gives
\begin{equation}
\max_{1\leq j\leq m}
\left|
n s_j\left(G_n^{\mathrm{Gin}}\right)^2
-
s s_j\left(G_s^{\mathrm{Gin}}\right)^2
\right|
\leq
C n^{-5/C_0}
\label{eq:large-to-small-gaussian}
\end{equation}
outside an event of probability at most
\(C n^{-1/C_0}\).

Let \(d_{\mathrm{P}}^{\left(m\right)}\) denote the Prokhorov metric on
\(\left[0,\infty\right)^m\) induced by the
\(\ell^\infty\) norm. Equations
\eqref{eq:triangular-to-small-gaussian} and
\eqref{eq:large-to-small-gaussian}, together with the triangle inequality for the Prokhorov metric, imply
\[
d_{\mathrm{P}}^{\left(m\right)}
\left(
\mathcal{L}\left(
\boldsymbol{\Lambda}_{n,m}
\right),
\mathcal{L}\left(
\boldsymbol{\Lambda}_{n,m}^{\mathrm{Gin}}
\right)
\right)
\leq
C n^{-1/C_0}
\]
uniformly over
\(m\leq n^{1/C_0}\).

After increasing the uniform threshold \(n_0\), the right-hand side is at most
\(
n^{-1/(2C_0)}.
\)
The defining inequalities for the Prokhorov metric then give
\eqref{eq:triangular-hard-edge-comparison} with
\(
c_{\mathrm{win}}
=
\frac{1}{C_0}
\text{ and }
c_{\mathrm{cmp}}
=
\frac{1}{2C_0}.
\)
This completes the proof.
\end{proof}

We now apply Theorem~\ref{thm:triangular-hard-edge} to the matrix ratios considered in the main text. For each \(\nu\in\left\{A,B\right\}\), let the entries of the \(\nu\)-matrix at dimension \(n\) have common law \(\Xi_{\nu,n}\), where
\[
\E\left[\Xi_{\nu,n}\right]=0,
\qquad
\E\left[\left|\Xi_{\nu,n}\right|^2\right]=1,
\qquad
\E\left[\Xi_{\nu,n}^2\right]=0,
\]
\[
\sup_{n\geq1}\E\left[\left|\Xi_{\nu,n}\right|^p\right]<\infty
\qquad
\text{for every }p\geq1,
\]
and the corresponding densities are bounded uniformly in \(n\). Under these scalar triangular-array assumptions, Theorem~\ref{thm:triangular-hard-edge} supplies the growing hard-edge comparison used in the proof of \eqref{eq:inverse-hard-edge-tail}. The square-Gram rigidity estimate is uniform because its constants depend only on the common moment bounds and the constant variance profile. The imprimitive local law used for singular-vector delocalization has the same uniformity. The conditional Lyapunov bounds for the rotated blocks use only a common third-moment bound, while principal-minor orthogonality remains exact at every \(n\). Consequently, Theorems~\ref{thm:determinant-field} and~\ref{thm:microscopic-spectrum} remain valid when the scalar atom laws vary with \(n\) under these conditions.

For the multivariate conclusion, retain the preceding triangular-array assumptions for \(A_n\), replace the scalar perturbation law by a vector-valued law, fix \(r\geq1\), and fix a Hermitian positive-definite matrix \(\boldsymbol{\Sigma}\in\mathbb{C}^{r\times r}\). At dimension \(n\), let
\[
\boldsymbol{\Xi}_{B,n,ij}
:=
\left(
\left(B_n^{\left(1\right)}\right)_{ij},
\ldots,
\left(B_n^{\left(r\right)}\right)_{ij}
\right)^\top,
\qquad
1\leq i,j\leq n,
\]
be independent and identically distributed copies of a vector \(\boldsymbol{\Xi}_{B,n}\in\mathbb{C}^r\), independent of \(A_n\), and assume
\[
\E\left[\boldsymbol{\Xi}_{B,n}\right]=\boldsymbol{0},
\qquad
\E\left[\boldsymbol{\Xi}_{B,n}\boldsymbol{\Xi}_{B,n}^*\right]=\boldsymbol{\Sigma},
\qquad
\E\left[\boldsymbol{\Xi}_{B,n}\boldsymbol{\Xi}_{B,n}^\top\right]=\boldsymbol{0}
\]
for every \(n\), together with
\[
\sup_{n\geq1}
\E\left[
\left\|\boldsymbol{\Xi}_{B,n}\right\|_2^p
\right]
<\infty
\qquad
\text{for every }p\geq1.
\]
The covariance matrix is therefore fixed across the triangular array, rather than merely bounded above and below. Conditional on \(A_n\), every fixed family of rotated blocks has the same covariance \(\boldsymbol{\Sigma}\) and vanishing pseudocovariance as in the proof of Theorem~\ref{thm:several-directions}; the common vector moment bounds make the conditional Lindeberg estimates uniform. The principal-minor identity \eqref{eq:multivariate-principal-minor-orthogonality} also remains exact with the same covariance form at each \(n\). Hence Theorem~\ref{thm:several-directions} remains valid for these vector-valued triangular arrays, with limiting field \(Q_{\boldsymbol{\Sigma}}\).

\bibliographystyle{apalike}
\bibliography{hard_edge_determinant_fields_references}
\end{document}